\newcommand{\nc}{\newcommand}
\def\Z{{\bf Z}}
\def\C{{\bf C}}
\def\A{{\bf A}}
\def\P{{\bf P}}
\nc{\CC}{{\mathbb{C}}}
\nc{\LL}{{\mathbb{L}}}
\nc{\RR}{{\mathbb{R}}}
\nc{\PP}{{\mathbb{P}}}
\nc{\OO}{{\mathbb{O}}}
\nc{\QQ}{{\mathbb{Q}}}
\nc{\ZZ}{{\mathbb{Z}}}
\nc{\cA}{{\mathscr{A}}}
\nc{\cB}{{\mathscr{B}}}
\nc{\cC}{{\mathscr{C}}}
\nc{\cD}{{\mathscr{D}}}
\nc{\cE}{{\mathscr{E}}}
\nc{\cF}{{\mathscr{F}}}
\nc{\cG}{{\mathscr{G}}}
\nc{\cH}{{\mathscr{H}}}
\nc{\cI}{{\mathscr{I}}}
\nc{\cJ}{{\mathscr{J}}}
\nc{\cK}{{\mathscr{K}}}
\nc{\cL}{{\mathscr{L}}}
\nc{\cM}{{\mathscr{M}}}
\nc{\cN}{{\mathscr{N}}}
\nc{\cO}{{\mathscr{O}}}
\nc{\cP}{{\mathscr{P}}}
\nc{\cQ}{{\mathscr{Q}}}
\nc{\bcQ}{{\overline{\mathscr{Q}}}}
\nc{\cR}{{\mathscr{R}}}
\nc{\cS}{{\mathscr{S}}}
\nc{\cT}{{\mathscr{T}}}
\nc{\cU}{{\mathscr{U}}}
\nc{\cV}{{\mathscr{V}}}
\nc{\cW}{{\mathscr{W}}}
\nc{\cX}{{\mathscr{X}}}
\nc{\cY}{{\mathscr{Y}}}
\nc{\cZ}{{\mathscr{Z}}}
\nc{\bA}{{\mathbf{A}}}
\nc{\bB}{{\mathbf{B}}}
\nc{\bC}{{\mathbf{C}}}
\nc{\bD}{{\mathbf{D}}}
\nc{\bE}{{\mathbf{E}}}
\nc{\bF}{{\mathbf{F}}}
\nc{\bG}{{\mathbf{G}}}
\nc{\bH}{{\mathbf{H}}}
\nc{\bI}{{\mathbf{I}}}
\nc{\bJ}{{\mathbf{J}}}
\nc{\bK}{{\mathbf{K}}}
\nc{\bL}{{\mathbf{L}}}
\nc{\bM}{{\mathbf{M}}}
\nc{\bN}{{\mathbf{N}}}
\nc{\bO}{{\mathbf{O}}}
\nc{\bP}{{\mathbf{P}}}
\nc{\bQ}{{\mathbf{Q}}}
\nc{\bR}{{\mathbf{R}}}
\nc{\bS}{{\mathbf{S}}}
\nc{\bT}{{\mathbf{T}}}
\nc{\bU}{{\mathbf{U}}}
\nc{\bV}{{\mathbf{V}}}
\nc{\bW}{{\mathbf{W}}}
\nc{\bX}{{\mathbf{X}}}
\nc{\bY}{{\mathbf{Y}}}
\nc{\bZ}{{\mathbf{Z}}}
\nc{\ba}{{\mathbf{a}}}
\nc{\bb}{{\mathbf{b}}}
\nc{\bc}{{\mathbf{c}}}
\nc{\bd}{{\mathbf{d}}}
\nc{\be}{{\mathbf{e}}}
\nc{\bg}{{\mathbf{g}}}
\nc{\bh}{{\mathbf{h}}}
\nc{\bi}{{\mathbf{i}}}
\nc{\bj}{{\mathbf{j}}}
\nc{\bk}{{\mathbf{k}}}
\nc{\bl}{{\mathbf{l}}}
\nc{\bm}{{\mathbf{m}}}
\nc{\bn}{{\mathbf{n}}}
\nc{\bo}{{\mathbf{o}}}
\nc{\bp}{{\mathbf{p}}}
\nc{\bq}{{\mathbf{q}}}
\nc{\br}{{\mathbf{r}}}
\nc{\bs}{{\mathbf{s}}}
\nc{\bt}{{\mathbf{t}}}
\nc{\bu}{{\mathbf{u}}}
\nc{\bv}{{\mathbf{v}}}
\nc{\bw}{{\mathbf{w}}}
\nc{\bx}{{\mathbf{x}}}
\nc{\by}{{\mathbf{y}}}
\nc{\bz}{{\mathbf{z}}}
\nc{\fA}{{\mathfrak{A}}}
\nc{\fB}{{\mathfrak{B}}}
\nc{\fC}{{\mathfrak{C}}}
\nc{\fD}{{\mathfrak{D}}}
\nc{\fE}{{\mathfrak{E}}}
\nc{\fF}{{\mathfrak{F}}}
\nc{\fG}{{\mathfrak{G}}}
\nc{\fH}{{\mathfrak{H}}}
\nc{\fI}{{\mathfrak{I}}}
\nc{\fJ}{{\mathfrak{J}}}
\nc{\fK}{{\mathfrak{K}}}
\nc{\fL}{{\mathfrak{L}}}
\nc{\fM}{{\mathfrak{M}}}
\nc{\fN}{{\mathfrak{N}}}
\nc{\fO}{{\mathfrak{O}}}
\nc{\fP}{{\mathfrak{P}}}
\nc{\fQ}{{\mathfrak{Q}}}
\nc{\fR}{{\mathfrak{R}}}
\nc{\fS}{{\mathfrak{S}}}
\nc{\fT}{{\mathfrak{T}}}
\nc{\fU}{{\mathfrak{U}}}
\nc{\fV}{{\mathfrak{V}}}
\nc{\fW}{{\mathfrak{W}}}
\nc{\fX}{{\mathfrak{X}}}
\nc{\fY}{{\mathfrak{Y}}}
\nc{\fZ}{{\mathfrak{Z}}}
\nc{\fa}{{\mathfrak{a}}}
\nc{\fb}{{\mathfrak{b}}}
\nc{\fc}{{\mathfrak{c}}}
\nc{\fd}{{\mathfrak{d}}}
\nc{\fe}{{\mathfrak{e}}}
\nc{\ff}{{\mathfrak{f}}}
\nc{\fg}{{\mathfrak{g}}}
\nc{\fh}{{\mathfrak{h}}}
\nc{\fj}{{\mathfrak{j}}}
\nc{\fk}{{\mathfrak{k}}}
\nc{\fl}{{\mathfrak{l}}}
\nc{\fm}{{\mathfrak{m}}}
\nc{\fn}{{\mathfrak{n}}}
\nc{\fo}{{\mathfrak{o}}}
\nc{\fp}{{\mathfrak{p}}}
\nc{\fq}{{\mathfrak{q}}}
\nc{\fr}{{\mathfrak{r}}}
\nc{\fs}{{\mathfrak{s}}}
\nc{\ft}{{\mathfrak{t}}}
\nc{\fu}{{\mathfrak{u}}}
\nc{\fv}{{\mathfrak{v}}}
\nc{\fw}{{\mathfrak{w}}}
\nc{\fx}{{\mathfrak{x}}}
\nc{\fy}{{\mathfrak{y}}}
\nc{\fz}{{\mathfrak{z}}}
\nc{\sA}{{\mathsf{A}}}
\nc{\sB}{{\mathsf{B}}}
\nc{\sC}{{\mathsf{C}}}
\nc{\sD}{{\mathsf{D}}}
\nc{\sE}{{\mathsf{E}}}
\nc{\sF}{{\mathsf{F}}}
\nc{\sG}{{\mathsf{G}}}
\nc{\sH}{{\mathsf{H}}}
\nc{\sI}{{\mathsf{I}}}
\nc{\sJ}{{\mathsf{J}}}
\nc{\sK}{{\mathsf{K}}}
\nc{\sL}{{\mathsf{L}}}
\nc{\sM}{{\mathsf{M}}}
\nc{\sN}{{\mathsf{N}}}
\nc{\sO}{{\mathsf{O}}}
\nc{\sP}{{\mathsf{P}}}
\nc{\sQ}{{\mathsf{Q}}}
\nc{\sR}{{\mathsf{R}}}
\nc{\sS}{{\mathsf{S}}}
\nc{\sT}{{\mathsf{T}}}
\nc{\sU}{{\mathsf{U}}}
\nc{\sV}{{\mathsf{V}}}
\nc{\sW}{{\mathsf{W}}}
\nc{\sX}{{\mathsf{X}}}
\nc{\sY}{{\mathsf{Y}}}
\nc{\sZ}{{\mathsf{Z}}}
\nc{\sa}{{\mathsf{a}}}
\nc{\sd}{{\mathsf{d}}}
\nc{\se}{{\mathsf{e}}}
\nc{\sg}{{\mathsf{g}}}
\nc{\sh}{{\mathsf{h}}}
\nc{\si}{{\mathsf{i}}}
\nc{\sj}{{\mathsf{j}}}
\nc{\sk}{{\mathsf{k}}}
\nc{\sm}{{\mathsf{m}}}
\nc{\sn}{{\mathsf{n}}}
\nc{\so}{{\mathsf{o}}}
\nc{\sq}{{\mathsf{q}}}
\nc{\sr}{{\mathsf{r}}}
\nc{\st}{{\mathsf{t}}}
\nc{\su}{{\mathsf{u}}}
\nc{\sv}{{\mathsf{v}}}
\nc{\sw}{{\mathsf{w}}}
\nc{\sx}{{\mathsf{x}}}
\nc{\sy}{{\mathsf{y}}}
\nc{\sz}{{\mathsf{z}}}
\newcommand{\rc}{\mathrm{c}}
\nc{\oA}{{\overline{A}}}
\nc{\oB}{{\overline{B}}}
\nc{\oC}{{\overline{C}}}
\nc{\oD}{{\overline{D}}}
\nc{\oE}{{\overline{E}}}
\nc{\oF}{{\overline{F}}}
\nc{\oG}{{\overline{G}}}
\nc{\oH}{{\overline{H}}}
\nc{\oI}{{\overline{I}}}
\nc{\oJ}{{\overline{J}}}
\nc{\oK}{{\overline{K}}}
\nc{\oL}{{\overline{L}}}
\nc{\oM}{{\overline{M}}}
\nc{\oN}{{\overline{N}}}
\nc{\oO}{{\overline{O}}}
\nc{\oP}{{\overline{P}}}
\nc{\oQ}{{\overline{Q}}}
\nc{\oR}{{\overline{R}}}
\nc{\oS}{{\overline{S}}}
\nc{\oT}{{\overline{T}}}
\nc{\oU}{{\overline{U}}}
\nc{\oV}{{\overline{V}}}
\nc{\oW}{{\overline{W}}}
\nc{\oX}{{\overline{X}}}
\nc{\oY}{{\overline{Y}}}
\nc{\oZ}{{\overline{Z}}}
\nc{\oa}{{\overline{a}}}
\nc{\ob}{{\overline{b}}}
\nc{\oc}{{\overline{c}}}
\nc{\od}{{\overline{d}}}
\nc{\of}{{\overline{f}}}
\nc{\og}{{\overline{g}}}
\nc{\oh}{{\overline{h}}}
\nc{\oi}{{\overline{i}}}
\nc{\oj}{{\overline{j}}}
\nc{\ok}{{\overline{k}}}
\nc{\ol}{{\overline{l}}}
\nc{\om}{{\overline{m}}}
\nc{\on}{{\overline{n}}}
\nc{\oo}{{\overline{o}}}
\nc{\op}{{\overline{p}}}
\nc{\oq}{{\overline{q}}}
\nc{\os}{{\overline{s}}}
\nc{\ot}{{\overline{t}}}
\nc{\ou}{{\overline{u}}}
\nc{\ov}{{\overline{v}}}
\nc{\ow}{{\overline{w}}}
\nc{\ox}{{\overline{x}}}
\nc{\oy}{{\overline{y}}}
\nc{\oz}{{\overline{z}}}
\nc{\tA}{{\tilde{A}}}
\nc{\tB}{{\tilde{B}}}
\nc{\tC}{{\tilde{C}}}
\nc{\tD}{{\tilde{D}}}
\nc{\tE}{{\tilde{E}}}
\nc{\tF}{{\tilde{F}}}
\nc{\tG}{{\tilde{G}}}
\nc{\tH}{{\tilde{H}}}
\nc{\tI}{{\tilde{I}}}
\nc{\tJ}{{\tilde{J}}}
\nc{\tK}{{\tilde{K}}}
\nc{\tL}{{\tilde{L}}}
\nc{\tM}{{\tilde{M}}}
\nc{\tN}{{\tilde{N}}}
\nc{\tO}{{\tilde{O}}}
\nc{\tP}{{\tilde{P}}}
\nc{\tQ}{{\tilde{Q}}}
\nc{\tR}{{\tilde{R}}}
\nc{\tS}{{\widetilde{S}}}
\nc{\tT}{{\tilde{T}}}
\nc{\tU}{{\tilde{U}}}
\nc{\tV}{{\tilde{V}}}
\nc{\tW}{{\tilde{W}}}
\nc{\tX}{{\tilde{X}}}
\nc{\tY}{{\tilde{Y}}}
\nc{\tZ}{{\tilde{Z}}}
\nc{\ta}{{\tilde{a}}}
\nc{\tb}{{\tilde{b}}}
\nc{\tc}{{\tilde{c}}}
\nc{\td}{{\tilde{d}}}
\nc{\te}{{\tilde{e}}}
\nc{\tf}{{\tilde{f}}}
\nc{\tg}{{\tilde{g}}}
\nc{\ti}{{\tilde{i}}}
\nc{\tj}{{\tilde{j}}}
\nc{\tk}{{\tilde{k}}}
\nc{\tl}{{\tilde{l}}}
\nc{\tm}{{\tilde{m}}}
\nc{\tn}{{\tilde{n}}}
\nc{\tp}{{\tilde{p}}}
\nc{\tq}{{\tilde{q}}}
\nc{\tr}{{\tilde{r}}}
\nc{\ts}{{\tilde{s}}}
\nc{\tu}{{\tilde{u}}}
\nc{\tv}{{\tilde{v}}}
\nc{\tw}{{\tilde{w}}}
\nc{\tx}{{\tilde{x}}}
\nc{\ty}{{\tilde{y}}}
\nc{\tz}{{\tilde{z}}}
\nc{\hA}{{\hat{A}}}
\nc{\hB}{{\hat{B}}}
\nc{\hC}{{\hat{C}}}
\nc{\hD}{{\hat{D}}}
\nc{\hE}{{\hat{E}}}
\nc{\hF}{{\hat{F}}}
\nc{\hG}{{\hat{G}}}
\nc{\hH}{{\hat{H}}}
\nc{\hI}{{\hat{I}}}
\nc{\hJ}{{\hat{J}}}
\nc{\hK}{{\hat{K}}}
\nc{\hL}{{\hat{L}}}
\nc{\hM}{{\hat{M}}}
\nc{\hN}{{\hat{N}}}
\nc{\hO}{{\hat{O}}}
\nc{\hP}{{\hat{P}}}
\nc{\hQ}{{\hat{Q}}}
\nc{\hR}{{\hat{R}}}
\nc{\hS}{{\widehat{S}}}
\nc{\hT}{{\hat{T}}}
\nc{\hU}{{\hat{U}}}
\nc{\hV}{{\hat{V}}}
\nc{\hW}{{\hat{W}}}
\nc{\hX}{{\widehat{X}}}
\nc{\hY}{{\hat{Y}}}
\nc{\hZ}{{\hat{Z}}}
\nc{\ha}{{\hat{a}}}
\nc{\hb}{{\hat{b}}}
\nc{\hc}{{\hat{c}}}
\nc{\hd}{{\hat{d}}}
\nc{\he}{{\hat{e}}}
\nc{\hf}{{\hat{f}}}
\nc{\hg}{{\hat{g}}}
\nc{\hh}{{\hat{h}}}
\nc{\hi}{{\hat{i}}}
\nc{\hio}{{\hat{\iota}}}
\nc{\hj}{{\hat{\jmath}}}
\nc{\hk}{{\hat{k}}}
\nc{\hl}{{\hat{l}}}
\nc{\hm}{{\hat{m}}}
\nc{\hn}{{\hat{n}}}
\nc{\ho}{{\hat{o}}}
\nc{\hp}{{\hat{p}}}
\nc{\hq}{{\hat{q}}}
\nc{\hr}{{\hat{r}}}
\nc{\hs}{{\hat{s}}}
\nc{\hu}{{\hat{u}}}
\nc{\hv}{{\hat{v}}}
\nc{\hw}{{\hat{w}}}
\nc{\hx}{{\hat{x}}}
\nc{\hy}{{\hat{y}}}
\nc{\hz}{{\hat{z}}}
\nc{\eps}{\varepsilon}
\nc{\lan}{\big\langle}
\nc{\ran}{\big\rangle}
\nc{\kk}{{\Bbbk}}
\nc{\bcV}{{\overline{\cV}}}
\nc{\bcA}{{\overline{\cA}}}
\nc{\tcA}{{\widetilde{\cA}}}
\nc{\tcV}{{\widetilde{\cV}}}
\nc{\SV}{\textnormal{Q}(V)}
\nc{\SK}{\textnormal{Q}(K)}
\def\isom{\simeq}
\def\hra{\hookrightarrow}
\DeclareMathOperator{\isomlra}{\stackrel{{}_{\scriptstyle\sim}}{\lra}}
\def\lhra{\ensuremath{\lhook\joinrel\relbar\joinrel\rightarrow}}
\def\setminus{\smallsetminus}
\def\cong{\isom}
\nc{\Ap}{{A^\perp}}
\renewcommand{\Im}{\operatorname{Im}}
\def\bw#1#2{\textstyle{\bigwedge\hskip-0.9mm^{#1}}\hskip0.2mm{#2}}
\nc{\hhX}{\widehat{\widehat{X\,}}}
\DeclareMathOperator{\Hom}{\mathrm{Hom}}
\DeclareMathOperator{\CHom}{\mathscr{H}\!\mathit{om}}
\DeclareMathOperator{\Hilb}{\mathrm{Hilb}}
\DeclareMathOperator{\Spec}{\mathrm{Spec}}
\DeclareMathOperator{\Sing}{\mathrm{Sing}}
\DeclareMathOperator{\Cl}{\mathrm{Cl}}
\DeclareMathOperator{\Ker}{\mathrm{Ker}}
\DeclareMathOperator{\Coker}{\mathrm{Coker}}
\DeclareMathOperator{\Gr}{\mathrm{Gr}}
\DeclareMathOperator{\LGr}{\mathrm{LGr}}
\DeclareMathOperator{\id}{\mathrm{id}}
\DeclareMathOperator{\codim}{\mathrm{codim}}
\DeclareMathOperator{\Sym}{\mathrm{Sym}}
\def\lin{\mathrel{\underset{\textnormal{lin}}{\equiv}}}
\def\notlin{\mathrel{\underset{\textnormal{lin}}{\not\equiv}}}
\def\lra{\longrightarrow}
\def\llra{\hbox to 10mm{\rightarrowfill}}
\def\lllra{\hbox to 15mm{\rightarrowfill}}
\theoremstyle{plain}
\newtheorem{theorem}{Theorem}[section]
\newtheorem{lemma}[theorem]{Lemma}
\newtheorem{proposition}[theorem]{Proposition}
\newtheorem{corollary}[theorem]{Corollary}
\theoremstyle{definition}
\newtheorem{definition}[theorem]{Definition}
\newtheorem{defi}[theorem]{Definition}
\theoremstyle{remark}
\newtheorem{remark}[theorem]{Remark}
\title[Double covers]{Double covers of quadratic degeneracy\\[1ex]and Lagrangian intersection loci}
  \author[O. Debarre]{Olivier Debarre}
\address{Universit\'e Paris-Diderot, PSL Research University, CNRS, 
\'Ecole normale sup\'erieure, D\'epartement de  Math\'ematiques et Applications,
45 rue d'Ulm, 75230 Paris cedex 05, France}
\email{{\tt olivier.debarre@ens.fr}}
 \author[A. Kuznetsov]{Alexander Kuznetsov}
 \address{Algebraic Geometry Section, Steklov Mathematical Institute,
  8 Gubkin str., Moscow 119991 Russia
 \newline
 Interdisciplinary Scientific Center J.-V.\ Poncelet, Independent University of Moscow, Russia
 \newline
 Laboratory of Algebraic Geometry, HSE University, Russia}
 \email{{\tt  akuznet@mi-ras.ru}}
\thanks{A.K. was partially supported by the HSE University Basic Research Program, Russian Academic Excellence Project~\mbox{``5--100''}
and by the Program of the Presidium of the Russian Academy of Sciences~01 ``Fundamental Mathematics and its Applications'' under grant PRAS-18-01.}
\subjclass[2010]{14E20, 14C20, 14J35,  14J40, 14J70, 14M99}
\begin{document}

\begin{abstract}
 We explain a general construction of  double covers of quadratic degeneracy loci and Lagrangian intersection loci
based on reflexive sheaves.\ We relate the double covers of quadratic degeneracy loci to the Stein factorizations 
of the relative Hilbert schemes of linear spaces of the corresponding quadric fibrations.\ We give a criterion for these double covers to be nonsingular.\ 

 {These results are an extension of  O'Grady's construction of  double covers  of EPW sextics 
and provide an alternate construction of  Iliev--Kapustka--Kapustka--Ranestad's EPW cubes.}
\end{abstract}

\maketitle

\section{Introduction}

When double coverings are mentioned,  one usually thinks of  double coverings branched over  divisors.\ 
 {These}  are very classical  objects in algebraic geometry.\ 
Let~$D$ be an effective Cartier divisor on a scheme $S$ such that the line bundle $\cO_S(D)$  
is a square in the Picard group of $S$, that is, 
\begin{equation*}
\cO_S(-D) \cong \cM^{\otimes2} 
\end{equation*}
for some line bundle $\cM$.\  The double covering $\tS$ of $S$ branched over $D$ is defined as the relative spectrum
\begin{equation*}
\tS := \Spec_S(\cO_S \oplus \cM),
\end{equation*}
where the algebra structure on $\cO_S \oplus \cM$ is such that the multiplication on $\cM$ is given by the composition 
\begin{equation*}
\cM \otimes \cM \cong \cO_S(-D) \xrightarrow{\ \cdot s_D\ } \cO_S
\end{equation*}
(here $s_D$ is a section of $\cO_S(D)$ with divisor $D$).\ 
This construction depends on the choice of the line bundle~$\cM$ (there may be several choices if the Picard group has nontrivial 2-torsion) 
and of the section~$s_D$ (the various choices form a torsor over the group $H^0(S,\cO_S^\times)$
of invertible functions on~$S$, and two choices provide  isomorphic double coverings if and only if
their ratio is a square of an invertible function).\ 
The  construction also works for the zero divisor $D$ and produces an \'etale double covering of~$S$ 
(which is nontrivial if the square root $\cM$ of~$\cO_S(-D) = \cO_S$ is nontrivial, and with the same ambiguity for the choice).

There is an extension of this construction which, although quite standard in birational geometry, is much less known.\ 
It is defined in a similar way, with the line bundle~$\cM$ replaced by a \emph{reflexive rank-$1$ sheaf} $\cR$.\ 
One considers the sum $\cO_S \oplus \cR$ as a sheaf of  {commutative} $\cO_S$-algebras, 
where the multiplication on the second summand is given by a  {symmetric} map $\bm \colon \cR \otimes \cR \to \cO_S$.\ 
Such a map automatically factors through the  {canonical map~$\cR \otimes \cR \to (\cR \otimes \cR)^{\vee\vee}$ to the reflexive hull}.\ 
We will concentrate on the special case  where
\begin{equation*}
(\cR \otimes \cR)^{\vee\vee} \cong \cO_S
\end{equation*}
(the reflexive sheaf $\cR$ is then \emph{self-dual})
and choose a map $\bm$ that factors through such an isomorphism  {(it is then automatically symmetric)}.\ 
The double covering 
\begin{equation*}
\tS := \Spec_S(\cO_S \oplus \cR) \lra S
\end{equation*}
is then \'etale outside of a subset of codimension~2 and (when all invertible functions are squares)
is canonically defined by the reflexive sheaf $\cR$.\ So, if there is a natural source of self-dual rank-1 reflexive sheaves, one obtains double covers as above.

In this paper, we apply   this principle in two situations.\ The first one concerns quadratic degeneracy loci: start from a family of quadratic forms, that is, from a morphism 
\begin{equation*}
q \colon \cL \lra \Sym^2(\cE^\vee)
\end{equation*}
from a line bundle $\cL$ to the symmetric square of a rank-$m$ vector bundle over a scheme $S$.\ Consider the subscheme $S_k \subset S$   where the morphism $\cL \otimes \cE \to \cE^\vee$ induced by $q$ has corank at least $k$.\ We define a reflexive rank-1 sheaf $\cR_k$ on $S_k$ by 
 \begin{equation*}
\cR_k := (\bw{k}\cC_k)^{\vee\vee},
\quad \textnormal{where}\quad
\cC_k = \Coker(\cL \otimes \cE \xrightarrow{\ q\ } \cE^\vee)\vert_{S_k}.
\end{equation*}
If $\cL^{\otimes(m-k)}\vert_{S_k} \cong \cM^{\otimes2}$  for some line bundle $\cM$ on $S_k$,
the determinant of the restricted quadratic form~$q\vert_{S_k}$ induces a self-duality isomorphism 
on the sheaf $\cM \otimes \cR_k \otimes \det(\cE)\vert_{S_k}$.\ Thus, we obtain a  double cover
\begin{equation*}
\tS_k := \Spec_{S_k}(\cO_{S_k} \oplus (\cM \otimes \cR_k \otimes \det(\cE)\vert_{S_k})) \lra S_k
\end{equation*}
of the quadratic degeneracy locus (see Theorem~\ref{theorem:covering-quadratic} for details).\ 
There is an ambiguity in the choice of the line bundle~$\cM$ if the  Picard group of $S_k$ has nontrivial 2-torsion, 
and in the choice of an isomorphism~$\cL^{\otimes(m-k)}\vert_{S_k} \cong \cM^{\otimes2}$,
but the local properties of the  double covers do not depend on  {these choices}.

The archetypical example of  this construction is the following.\ Let $V$ be a vector space of dimension $k + 1$ and let
$S = \Sym^2(V^\vee)$ be the space of quadratic forms on~$V$, equipped with the universal family 
$\cO_S \to \Sym^2(V^\vee \otimes \cO_S)$ of quadratic forms.\ The subscheme
$S_k\subset S$ is  the locus of rank-1 forms and the double cover constructed above is
 the morphism 
\begin{eqnarray*}
V^\vee &\lra& V^\vee/\pm1 \isom S_k
\\ 
\ell &\longmapsto& \ell^2.
\end{eqnarray*}
The scheme
$S_k$ is isomorphic to the cone over the double Veronese embedding of the projective space~$\P(V^\vee)$,
its ring of functions is isomorphic to the invariant ring~$\kk[V^\vee]_+$ of the involution $\ell \mapsto -\ell$
acting on $\kk[V^\vee]$, and the associated reflexive sheaf corresponds to the antiinvariant module $\kk[V^\vee]_-$ of the involution
(Lemma~\ref{lemma:veronese-cover}).

We show that under suitable assumptions (a \emph{regularity} property of the family of quadratic forms; 
see Definition~\ref{definition:p-regular}), the   double cover $\tS_k \to S_k$ can be obtained 
from the archetypical example by a smooth base change (away from the locus $S_{k+2}$).\ 
This provides a convenient  nonsingularity criterion for the double cover $\tS_k$ (Proposition~\ref{proposition:covering-smooth})
and allows us to describe the branch and the ramification loci of the cover.

We also relate the double cover $\tS_k \to S_k$ to the double covers obtained by the Stein factorization
of the   projections of the Hilbert schemes of linear isotropic spaces for the quadric fibration 
\begin{equation*}
\cQ \subset \P_S(\cE) \lra S
\end{equation*}
corresponding to the family of quadratic forms $q$: we prove in Proposition~\ref{proposition:isotropic-stein} that the two covers agree 
over the locus $S_k \setminus S_{k+1}$, so that $\tS_k$ is the normalization of the cover obtained from the Hilbert scheme 
(and if the Hilbert scheme is normal, the two covers are isomorphic).

In Section~\ref{subsection:symmetroids}, we apply our results to construct natural double covers 
of symmetroid hypersurfaces of odd degree (Theorem~\ref{thoerem:symmetroids}).

The second situation where our machinery works is the case of Lagrangian intersection loci.
Let~$\cV$ be a vector bundle  of rank~$2n$ over a scheme $S$, equipped with a family 
\begin{equation*}
\omega \colon \bw2\cV \lra \cL
\end{equation*}
of symplectic forms.\ Given a pair of Lagrangian subbundles $\cA_1,\cA_2 \subset \cV$, we consider the morphism
\begin{equation*}
\omega_{\cA_1,\cA_2} \colon \cA_1 \lhra \cV \xrightarrow[\sim]{\ \omega\ } \cV^\vee \otimes \cL \lra \cA_2^\vee \otimes \cL
\end{equation*}
and define a subscheme $S_k \subset S$ as the  corresponding corank-$k$ degeneration scheme.\
As in the quadratic case, we define a reflexive rank-1 sheaf $\cR_k$ on $S_k$ by 
\begin{equation*}
\cR_k = (\bw{k}\cC_k)^{\vee\vee},
\quad \textnormal{where}\quad
\cC_k = \Coker(\cA_1 \xrightarrow{\ \omega_{\cA_1,\cA_2}\ } \cA_2^\vee \otimes \cL)\vert_{S_k},
\end{equation*}
and note that if $(\cL^{\otimes(-n-k)} \otimes \det(\cA_1) \otimes {\det}(\cA_2))\vert_{S_k} \cong \cM^{\otimes 2}$
for some line bundle $\cM$ on $S_k$, we have a self-duality isomorphism on the sheaf $\cM \otimes \cR_k$.\
Thus, we obtain a double cover
\begin{equation*}
\tS_k = \Spec_{S_k}(\cO_{S_k} \oplus ( \cM \otimes \cR_k)) \lra S_k
\end{equation*}
 (see Theorem~\ref{theorem:covering-lagrangian} for details).\
We show that this double cover does not change under (appropriately defined) isotropic reduction 
(Proposition~\ref{proposition:isotropic-reduction}).\ We also check that \'etale locally, this double cover 
coincides with the double cover of a quadratic degeneracy locus for an appropriately defined
family of quadratic forms (Proposition~\ref{proposition:lag-quad}).\
This allows us to use  the  nonsingularity criterion developed in the quadratic situation (Corollary~\ref{corollary:lagrangian-smooth}).

In the last section, we provide applications of our results to EPW varieties  {$\sY_A^{\ge k} $,
$\sY_{A^\perp}^{\ge k}  $, and~$\sZ_A^{\ge k}$ defined below.}\ 
Let $V_6$ be a vector space of dimension~6.\ We endow the 20-dimensional vector space~$\bw3V_6$
with the  $\det(V_6)$-valued symplectic form defined by the wedge product.\ 
Given a Lagrangian subspace~$A \subset \bw3V_6$, one defines, for~$k\ge0$, three series of varieties
\begin{eqnarray*}
\sY_A^{\ge k} 		&=& \{ [v] \in \P(V_6) 		\mid \dim(A \cap (v \wedge \bw2V_6)) \ge k 	\},\\ 
\sY_{A^\perp}^{\ge k} 	&=& \{ [V_5] \in \P(V_6^\vee) 	\mid \dim(A \cap \bw3V_5) \ge k 		\},\\ 
\sZ_A^{\ge k}		&=& \{ [U_3] \in \Gr(3,V_6)	\mid \dim(A \cap (V_6 \wedge \bw2U_3)) \ge k 	\}.
\end{eqnarray*}
The first two were extensively investigated by O'Grady (\cite{og1,og2,og3,og4,og5,og6,og7}; 
the second series reduces to the first upon replacing~$V_6$ with~$V_6^\vee$ and~$A$ with~$A^\perp$) 
and the third by Iliev--Kapustka--Kapustka--Ranestad (\cite{IKKR}).\ 
When $A$ is sufficiently general, O'Grady constructed a double cover~$\widetilde\sY_A \to \sY_A^{\ge 1}$ 
and Iliev--Kapustka--Kapustka--Ranestad a double cover $\widetilde\sZ_A \to \sZ_A^{\ge 2}$, 
where $\widetilde\sY_A$ and $\widetilde\sZ_A$ are hyperk\"ahler varieties of respective dimensions 4 and 6 
(called \emph{double EPW sextic} and \emph{EPW cube}).\ 
We show (Theorems~\ref{theorem:y-covers} and~\ref{theorem:z-covers}) that our construction produces, for each $k$,  double covers
\begin{equation*}
\widetilde\sY_A^{\ge k} \lra \sY_A^{\ge k},
\qquad 
\widetilde\sY_{A^\perp}^{\ge k} \lra \sY_{A^\perp}^{\ge k},
\qquad 
\widetilde\sZ_A^{\ge k} \lra \sZ_A^{\ge k},
\end{equation*}
that give double EPW sextics and EPW cubes as special cases.\ These double covers also appear in the theory of Gushel--Mukai varieties~(\cite{DK1,DK2}) 
and this was the original motivation for this work.\ In the relative situation (for the universal family of EPW varieties), 
a  similar double cover is the base for a generalized root stack construction 
in terms of which the moduli stack of Gushel--Mukai varieties is described in~\cite{DK3}.

Throughout the article, we fix a field $\kk$ of characteristic different from~2.\  All schemes are assumed to be of finite type over  $\kk$.\

We thank Nicolas Addington for  suggesting to include the symmetroid example (Section~\ref{subsection:symmetroids})
and the referee for useful comments.

\section{Reflexive sheaves and double covers}

We start with a brief reminder of the correspondence between self-dual reflexive sheaves, 
2-torsion classes in the Weil divisors class groups, and double covers unramified in codimension~1.

 A  connected normal scheme is integral and any normal scheme is a disjoint union of normal integral schemes (\cite[Tag~033H]{sp}).\ 
A scheme is {\sf nonsingular} if its local rings are regular.

Let $\cR$ be a coherent sheaf on  a normal
scheme $S$.\ Its dual sheaf $\cR^\vee$ is defined as 
\begin{equation*}
\cR^\vee := \CHom(\cR,\cO_S).
\end{equation*}
The coherent sheaf $\cR$   is called {\sf reflexive} (\cite{Ha}) if the canonical morphism
\begin{equation*}
\cR \lra \cR^{\vee\vee}:=(\cR^\vee)^\vee 
\end{equation*}
is an isomorphism.\ Any locally free sheaf is reflexive, and so is
the  sheaf $\cR^\vee$   {for any coherent sheaf  $\cR$} (\cite[Corollary~1.2]{Ha}).

We  say that the sheaf $\cR$ {\sf has rank $r$} if there is a dense open subscheme $S_0 \subset S$ 
such that the restriction of $\cR$ to $S_0$ is locally free of rank $r$.\ Its  {\sf locally free locus} is the maximal open subscheme~\mbox{$S_0 \subset S$} with this property.

Rank-1 reflexive sheaves on $S$ form a group for the operation
\begin{equation*}
(\cR_1, \cR_2) \longmapsto 
(\cR_1 \otimes \cR_2)^{\vee\vee}
\end{equation*}
and the inverse of $\cR$ is $\cR^\vee $.\ 
This group is isomorphic to the Picard group of the nonsingular locus of~$S$ (\cite[Tag~0AVT]{sp}).

A {\sf Weil divisor} $D$ on  {a normal scheme} $S$ is a finite formal linear combination of integral subschemes of $S$ of codimension~1.\ 
Its {\sf Cartier locus} is the maximal open subset $S_0 \subset S$ such that the restriction of~$D$ to~$S_0$ is Cartier,  {that is, locally principal}.\ 
The complement of $S_0$ in $S$ is contained in the singular locus of  $S$, so it has codimension at least~2.\ 
The following lemma is classical (see~\cite[Notation~1.2]{koko} or~\cite[Tag~0AVT]{sp}).

\begin{lemma}
\label{lemma:reflexive}
Let $S$ be a normal scheme.\ There is a group isomorphism  $D\mapsto \cO_S(D)$ between the group~$\Cl(S)$ of linear equivalence classes of Weil divisors on $S$  
and the group of rank-$1$ reflexive sheaves on~$S$.\ This isomorphism is  compatible with flat base change between normal schemes.\ If~\mbox{$D \subset S$} is an integral subscheme of codimension~$1$, we have
\begin{equation*}
\cO_S(D) = (\cI_D)^\vee,
\end{equation*}
the dual of the ideal sheaf of $D$ in $S$.\ Moreover, for any $D$, the locally free locus of $\cO_S(D)$ is the Cartier locus of~$D$.
\end{lemma}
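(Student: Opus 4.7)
The statement is classical and a full proof would cite standard references, but let me sketch the argument. First I would reduce to the case where $S$ is integral: a normal scheme is a disjoint union of normal integral schemes, so both $\Cl(S)$ and the group of rank-$1$ reflexive sheaves decompose as direct sums over connected components, and everything can be checked componentwise. Assume then that $S$ is integral normal with function field $K(S)$.

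For a Weil divisor $D$ on $S$, define $\cO_S(D)$ as the subsheaf of the constant sheaf $K(S)$ given by
\[
\cO_S(D)(U) \;=\; \{f \in K(S)^\times : \mathrm{div}(f) + D|_U \geq 0\} \cup \{0\},
\]
which makes sense because the local ring of $S$ at any codimension-$1$ point is a DVR. This sheaf is torsion-free of rank $1$. To verify reflexivity I would check the $S_2$ condition: by normality and Hartogs, a section of $\cO_S(D)$ on $U$ is determined by its stalks at codimension-$1$ points of $U$, so the sheaf extends uniquely across any codimension-$2$ closed subset; for a torsion-free rank-$1$ sheaf on a normal scheme this is equivalent to reflexivity. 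Multiplication of rational functions yields a map $\cO_S(D_1) \otimes \cO_S(D_2) \to \cO_S(D_1+D_2)$ which is an isomorphism on the Cartier locus of $D_1+D_2$ (whose complement has codimension $\geq 2$); since the target is reflexive, taking $(-)^{\vee\vee}$ identifies it with $(\cO_S(D_1)\otimes\cO_S(D_2))^{\vee\vee}$. For a principal divisor $D=\mathrm{div}(g)$, multiplication by $g$ gives an isomorphism $\cO_S \isomto \cO_S(D)$, so the construction descends to $\Cl(S)$ and defines a group homomorphism.

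For the inverse, given a rank-$1$ reflexive sheaf $\cR$, let $S_0 \subset S$ be its locally free locus, whose complement has codimension $\geq 2$ by the standard structure theorem for reflexive sheaves on a normal scheme. Pick any nonzero rational section $s$ of the line bundle $\cR|_{S_0}$, let $D_0 := \mathrm{div}(s)$, and let $D$ be its Zariski closure in $S$; two choices of $s$ differ by a rational function, so the class of $D$ in $\Cl(S)$ is well-defined, and by reflexivity $\cO_S(D) \cong \cR$ since they agree on $S_0$. The equality $\cO_S(D) = \cI_D^\vee$ for an integral codimension-$1$ subscheme $D$ follows from reflexivity of both sides and a direct comparison on the Cartier locus of $D$, where $\cI_D$ is generated by a local equation whose inverse generates $\cO_S(D)$. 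The locally free locus of $\cO_S(D)$ contains the Cartier locus by construction, and conversely a local trivialization of $\cO_S(D)$ at a point provides a local equation for $D$. Finally, for a flat morphism $f \colon T \to S$ between normal schemes, $f^*D$ is a well-defined Weil divisor because flatness preserves codimension $1$, reflexivity is preserved under flat pullback between normal schemes, and the natural map $f^*\cO_S(D) \to \cO_T(f^*D)$ is an isomorphism on the preimage of the Cartier locus of $D$, hence everywhere after taking reflexive hulls of both sides (which are already reflexive).

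\textbf{Main obstacle.} The technical core is verifying that $\cO_S(D)$ is $S_2$ (hence reflexive) and that the locally free locus coincides exactly with the Cartier locus: both rely on the depth-$2$ property of normal local rings at codimension-$\geq 2$ points. The flat base change assertion requires checking that the comparison map is an isomorphism in codimension $1$ and that reflexivity descends to flat pullbacks between normal schemes; once these two points are in place, the rest of the argument is bookkeeping about rational sections and their divisors.
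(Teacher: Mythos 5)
The paper does not actually prove this lemma: it is labelled classical and the authors simply cite \cite[Notation~1.2]{koko} and \cite[Tag~0AVT]{sp}. Your sketch is precisely the standard argument contained in those references (reduce to the integral case, define $\cO_S(D)\subset K(S)$ by valuative conditions, prove reflexivity via the depth-$2$/Hartogs extension property, recover $D$ from a rational section of $\cR$ over its locally free locus, and handle $\cI_D^\vee$, the Cartier locus, and flat base change by comparing reflexive sheaves off a closed subset of codimension $\ge 2$), so there is no divergence of method to report --- only that you supply an argument where the paper supplies a citation.

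One small inaccuracy is worth correcting, though it does not damage the proof. The multiplication map $\cO_S(D_1)\otimes\cO_S(D_2)\to\cO_S(D_1+D_2)$ need not be an isomorphism on the Cartier locus of $D_1+D_2$: for instance, if $D_1$ is a Weil divisor that is not Cartier at a point $s$ and $D_2=-D_1$, then $D_1+D_2=0$ is Cartier everywhere, but the map $\cO_S(D_1)\otimes\cO_S(-D_1)\to\cO_S$ has image a nontrivial ideal at $s$ (e.g.\ a ruling on the quadric cone). The correct statement, which is all your argument needs, is that the map is an isomorphism on the locus where both $D_1$ and $D_2$ are Cartier --- in particular on the regular locus of $S$, whose complement has codimension $\ge 2$ --- after which reflexivity of the target and the extension property identify $\cO_S(D_1+D_2)$ with $(\cO_S(D_1)\otimes\cO_S(D_2))^{\vee\vee}$, exactly as you go on to use. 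The remaining steps (locally free locus equals Cartier locus, $\cO_S(D)=\cI_D^\vee$ for $D$ integral of codimension $1$, compatibility with flat pullback between normal schemes via $f^*\CHom(\cF,\cO)\cong\CHom(f^*\cF,\cO)$ and preservation of codimension) are correctly reduced to comparisons in codimension $1$ and are fine as written.
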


The following result is a simple consequence of the lemma.

\begin{corollary}
\label{corollary:self-duality}
Let $S$ be a normal  scheme and let $D$ be a Weil divisor on $S$.\ Then $D$ is a $2$-torsion class,  that is, $2D \lin 0$, if and only if the corresponding reflexive sheaf $\cO_S(D)$ is self-dual.
\end{corollary}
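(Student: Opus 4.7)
The plan is to invoke Lemma~\ref{lemma:reflexive} and simply transport the condition $2D \lin 0$ through the group isomorphism $D \mapsto \cO_S(D)$.

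First I would note that, by Lemma~\ref{lemma:reflexive}, the assignment $D \mapsto \cO_S(D)$ is a group isomorphism from $\Cl(S)$ onto the group of rank-1 reflexive sheaves on $S$ under the operation $(\cR_1,\cR_2) \mapsto (\cR_1 \otimes \cR_2)^{\vee\vee}$. By the discussion preceding Lemma~\ref{lemma:reflexive}, the inverse of $\cO_S(D)$ in this group is its dual $\cO_S(D)^\vee = \cO_S(-D)$. In particular, $\cO_S(D)$ is self-dual exactly when it equals its own inverse in this group.

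Next I would translate the statement. The relation $2D \lin 0$ means that $2D$ is the zero element of $\Cl(S)$, which, under the isomorphism above, is equivalent to
\begin{equation*}
\bigl(\cO_S(D) \otimes \cO_S(D)\bigr)^{\vee\vee} \cong \cO_S,
\end{equation*}
i.e., $\cO_S(D)$ is its own inverse. By the previous paragraph, this in turn is equivalent to the existence of an isomorphism $\cO_S(D) \cong \cO_S(D)^\vee$, which is precisely the definition of $\cO_S(D)$ being self-dual. Both directions of the equivalence follow from this single chain of reformulations, so no auxiliary argument is needed.

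There is no real obstacle: the corollary is essentially a tautological consequence of Lemma~\ref{lemma:reflexive}, once one remembers that the group operation is the reflexive tensor product and the inverse is dualization. The only thing to keep in mind is that the equality $\cO_S(2D) \cong (\cO_S(D) \otimes \cO_S(D))^{\vee\vee}$ requires the reflexivization to match the group law on $\Cl(S)$, which is precisely what Lemma~\ref{lemma:reflexive} guarantees.
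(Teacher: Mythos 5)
Your proof is correct and is essentially the paper's own argument: both reduce $2D \lin 0$ to $D \lin -D$ and then use the isomorphism $\cO_S(D)^\vee \cong \cO_S(-D)$ together with the group isomorphism of Lemma~\ref{lemma:reflexive} to identify this with self-duality of $\cO_S(D)$. No issues.
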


\begin{proof}
The first condition is equivalent to $D \lin -D$ which, in view of the isomorphism 
\begin{equation*}
\cO_S(D)^\vee \cong \cO_S(-D), 
\end{equation*}
is equivalent to the self-duality of $\cO_S(D)$.
\end{proof}

We now study double covers, by which we mean the following.

\begin{defi}\label{dc}
A morphism $f \colon \tS \to S$ is a {\sf double cover} if it is finite of degree 2 (that is, $f_*\cO_\tS$ has rank 2), but is not necessary flat, 
and there is an involution $\tau \colon \tS \to \tS$  over $S$ such that $S \cong \tS/\tau$.
\end{defi}

\begin{remark}
If $\tS$ and $S$ are both normal, there is no need to require the existence of the involution~$\tau$.\
Indeed, $\tS$ is then just the integral closure of $S$ in a degree-2 extension of the  {ring} of rational functions on $S$ 
(any such extension is a Galois extension since the characteristic of the base field is different from~2)
and the Galois group of the extension acts regularly on $\tS$ and gives the involution.
 \end{remark}

The relation with the notions discussed above is the following.

\begin{proposition}
\label{proposition:divisor-cover}
Assume $D$ is a $2$-torsion Weil divisor class on a normal    scheme $S$.\  There is a double cover $f \colon \tS \to S$, with $\tS$  normal, that satisfies   the following two properties:
\begin{itemize}
\item[ \textnormal{(a)}] there is an isomorphism $f_*\cO_\tS \cong \cO_S \oplus \cO_S(D)$;
\item[ \textnormal{(b)}] the morphism $f$ is \'etale over the Cartier locus of $D$.
\end{itemize}
The set of isomorphism classes of double covers  {satisfying properties \textnormal{(a)} and \textnormal{(b)}} is a torsor over the group 
\begin{equation*}
H^0(S,\cO_S^\times)/H^0(S,\cO_S^\times)^2
\end{equation*}
of invertible functions on $S$ modulo squares, and all these covers are \'etale locally  {over $S$} isomorphic to each other.
\end{proposition}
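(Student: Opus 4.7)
Set $\cR := \cO_S(D)$. By Corollary~\ref{corollary:self-duality} the 2-torsion hypothesis on $D$ is equivalent to the existence of an isomorphism $\phi \colon (\cR \otimes \cR)^{\vee\vee} \isomto \cO_S$; composing with the canonical map $\cR \otimes \cR \to (\cR \otimes \cR)^{\vee\vee}$ yields a morphism $\bm \colon \cR \otimes \cR \to \cO_S$. The plan is to define
\[
\tS := \Spec_S(\cO_S \oplus \cR),
\]
with the $\cO_S$-algebra structure on $\cO_S \oplus \cR$ given by $\bm$ on the $\cR \otimes \cR$ component. Symmetry of $\bm$ is automatic because it factors through the reflexive hull $(\cR \otimes \cR)^{\vee\vee}$, on which the swap of factors agrees with the identity (it suffices to check this on the dense open Cartier locus $S_0 \subset S$, where $\cR$ is invertible); associativity is trivial since $\cR$ has rank $1$. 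The involution required by Definition~\ref{dc} is $(a, r) \mapsto (a, -r)$, an algebra automorphism because $\mathrm{char}\,\kk \neq 2$.

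Property~(a) is built in by construction, and (b) holds because on $S_0$ the morphism $\bm$ becomes an isomorphism of invertible sheaves, making the discriminant of $f$ nowhere vanishing on $S_0$. To verify normality of $\tS$, I would apply Serre's criteria: the complement of $S_0 \cap S^{\mathrm{sm}}$ in $S$ has codimension at least~$2$, and $\tS$ is regular on its preimage (étale over regular), giving $R_1$; property $S_2$ follows because $f_* \cO_\tS \cong \cO_S \oplus \cR$ is $S_2$ as a coherent sheaf on $S$ (each summand is reflexive on a normal scheme, hence $S_2$) and $f$ is finite.

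For the torsor statement, any cover satisfying (a) and (b) carries a canonical involution whose $\pm 1$-eigensheaf decomposition of~$f_* \cO_\tS$ forces the algebra structure to be of the above form, with multiplication given by some choice of $\phi$; two choices differ by a unique unit $u \in H^0(S, \cO_S^\times)$. An $\cO_S$-algebra homomorphism between the resulting covers must have the form $(a, r) \mapsto (a + \psi(r), \rho(r))$; compatibility with multiplication forces (using $\mathrm{char}\,\kk \neq 2$) the vanishing $\psi = 0$, while $\rho$ must be multiplication by a unit $v$ (since $\End(\cR) = H^0(S, \cO_S)$ by Hartogs, the locally free locus of $\cR$ having complement of codimension at least $2$), and a direct computation shows the multiplication is rescaled by~$v^2$. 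Hence the covers are isomorphic iff $u \in H^0(S, \cO_S^\times)^2$. The étale-local triviality is immediate: the étale cover $\Spec_S(\cO_S[t]/(t^2 - u)) \to S$ extracts a square root of~$u$ and trivializes its class modulo squares.

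The main technical obstacle is the normality of $\tS$ on the codimension-$2$ locus $S \setminus S_0$, where $\cR$ fails to be locally free and $f$ fails to be $\cO_S$-flat; the reflexivity of $\cR$, combined with Serre's criterion, is exactly what circumvents this and is the reason self-dual reflexive sheaves — rather than just line bundles — provide the right input for the construction.
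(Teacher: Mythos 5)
Your proposal is correct and follows essentially the same route as the paper: the same algebra structure on $\cO_S \oplus \cO_S(D)$ via a chosen isomorphism $(\cR\otimes\cR)^{\vee\vee} \cong \cO_S$, normality of $\tS$ via Serre's criterion ($R_1$ from \'etaleness in codimension~1, $S_2$ from reflexivity of the summands), and the torsor/\'etale-local statements from rescaling the multiplication by units modulo squares. Your analysis of algebra isomorphisms $(a,r)\mapsto(a+\psi(r),\rho(r))$ merely fills in details the paper leaves implicit, so there is nothing substantively different to report.
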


 {In Lemma~\ref{lemma:cover-divisor}, we will show that any double cover of a normal scheme 
which is \'etale over its nonsingular locus is obtained by this construction.}

\begin{proof}
Set $\cR := \cO_S(D)$.\
By Lemma~\ref{lemma:reflexive}, there is an  isomorphism $(\cR \otimes \cR)^{\vee\vee} \isomlra \cO_S$.\
Choose such an isomorphism and consider the composition
\begin{equation}
\label{eq:mu}
\bm \colon \cR \otimes \cR \lra (\cR \otimes \cR)^{\vee\vee} \isomlra \cO_S
\end{equation}
of the canonical morphism with the chosen isomorphism.\ The composition is symmetric (with respect to the permutation of factors in $\cR \otimes \cR$),  hence turns $\cO_S \oplus \cR$ into a sheaf of commutative $\cO_S$-algebras.\ This allows us to set
\begin{equation*}
\tS := \Spec_S (\cO_S \oplus \cR),
\end{equation*}
with  canonical map $f \colon \tS \to S$.\ This map is finite since $\cO_S \oplus \cR$ is coherent, and its degree is 2, the rank of~$\cO_S \oplus \cR$.\ The automorphism of the algebra $\cO_S \oplus \cR$ acting trivially on $\cO_S$ and by $-1$ on $\cR$ induces an involution $\tau$ of $\tS$ over $S$.\ The invariant subalgebra in $\cO_S \oplus \cR$ for this automorphism is~$\cO_S$, hence we have $\tS/\tau \cong S$.\ Thus, $f$ is a double cover in the sense of Definition \ref{dc}.

On the Cartier locus of $D$, the sheaf $\cR$ is invertible and $\bm$ is an isomorphism, hence~$f$ is \'etale.\ 
In particular, $f$ is \'etale over the nonsingular locus of $S$, hence $\tS$ is nonsingular in codimension~1.\ 
Moreover, the sheaf $\cO_S \oplus \cR$ satisfies Serre's condition $\mathbf{S}_2$:
 for the first summand, this follows from the normality of~$S$, 
and for the second summand, this follows from the reflexivity of $\cR$ (\cite[Theorem~1.9]{Ha94}).\ 
Therefore, the scheme $\tS$ satisfies condition~$\mathbf{S}_2$, hence is normal by Serre's criterion.

The only ambiguity in the construction of $f$ is the choice of  the algebra structure on $\cO_S \oplus \cR$,
that is, of the multiplication morphism $\cR \otimes \cR \to \cO_S$.\ 
Since $\cO_S$ is reflexive, this morphism factors through a morphism $(\cR \otimes \cR)^{\vee\vee} \to \cO_S$.\ 
Since we assume $f$ to be \'etale over the locally free locus of~$\cR$, that is, on the complement of a subset of codimension~2,
  this map is an isomorphism.\ Furthermore, if we multiply this isomorphism by the square of an invertible function, 
the isomorphism class of the algebra $\cO_S \oplus \cR$ will not change.\ 
It follows that the isomorphism class of~$f$ corresponds to the choice of an element in~$H^0(S,\cO_S^\times)/H^0(S,\cO_S^\times)^2$.\
Since any invertible function is \'etale locally a square, all these covers are \'etale locally isomorphic.
 \end{proof}

There is also a converse statement.

\begin{lemma}
\label{lemma:cover-divisor}
Assume that $f \colon \tS \to S$ is a double cover of normal schemes  that is \'etale over the nonsingular locus of $S$.\ 
There exists a Weil divisor $D$ on $S$ such that $2D \lin 0$ and $f_*\cO_\tS \cong \cO_S \oplus \cO_S(D)$.\
 If $f$ is \'etale everywhere,   $D$ is a Cartier divisor.
 \end{lemma}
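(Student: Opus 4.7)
The plan is to recover the divisor class from the double cover $f$ as the antiinvariant part of the pushforward $f_*\cO_\tS$, essentially reversing the construction of Proposition~\ref{proposition:divisor-cover}. Since the involution $\tau$ acts $\cO_S$-linearly on $f_*\cO_\tS$ and $\mathrm{char}\,\kk \ne 2$, the idempotents $\tfrac12(1\pm\tau)$ produce a decomposition
\begin{equation*}
f_*\cO_\tS = \cO_S \oplus \cR,
\end{equation*}
where the invariant summand is $\cO_S = (f_*\cO_\tS)^\tau$ (using $\tS/\tau \cong S$) and $\cR$ is the $(-1)$-eigensheaf of $\tau$. The goal is then to identify $\cR$ with $\cO_S(D)$ for a suitable Weil divisor $D$ and to verify $2D \lin 0$.

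The first step is to show that $\cR$ is a rank-$1$ reflexive sheaf on $S$. Over the étale locus of $f$, the cover is étale-locally isomorphic to $S \sqcup S$ with $\tau$ exchanging the factors, so $\cR$ is invertible there; in particular, since the étale locus contains the nonsingular locus of $S$, the sheaf $\cR$ is locally free of rank~$1$ in codimension~$1$. Moreover, since $\tS$ is normal, the finite pushforward $f_*\cO_\tS$ satisfies Serre's condition~$\mathbf{S}_2$, and this property passes to its direct summand $\cR$; combined with torsion-freeness (inherited from $f_*\cO_\tS$, which is torsion-free over the normal base~$S$), Hartshorne's criterion then forces $\cR$ to be reflexive on $S$. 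Lemma~\ref{lemma:reflexive} now provides a unique Weil divisor class $D$ on $S$ with $\cR \cong \cO_S(D)$.

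The second step is to verify that $2D \lin 0$. The algebra structure on $f_*\cO_\tS$ restricts to a symmetric map $\bm \colon \cR \otimes \cR \to \cO_S$ (the product of two antiinvariants being invariant), which factors canonically through the reflexive hull $(\cR \otimes \cR)^{\vee\vee}$. Over the étale locus, where the cover is trivialized, $\bm$ becomes the obvious nondegenerate pairing, so the induced map $(\cR \otimes \cR)^{\vee\vee} \to \cO_S$ is an isomorphism on an open subset whose complement has codimension at least~$2$. Both sides being reflexive of rank~$1$ on a normal scheme, the isomorphism extends to all of $S$; hence $\cR$ is self-dual and Corollary~\ref{corollary:self-duality} yields $2D \lin 0$. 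For the final assertion, if $f$ is étale everywhere, then $f_*\cO_\tS$ is locally free of rank~$2$, hence so is its direct summand $\cR$, which is therefore invertible and makes $D$ a Cartier divisor.

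The main technical hurdle is the reflexivity of $\cR$, which rests on the fact that a finite pushforward from a normal scheme is $\mathbf{S}_2$; once this is established, everything else follows formally from the dictionary—Lemma~\ref{lemma:reflexive} and Corollary~\ref{corollary:self-duality}—between $2$-torsion Weil divisor classes, self-dual rank-$1$ reflexive sheaves, and the extension of isomorphisms across codimension-$2$ subsets that has already been set up.
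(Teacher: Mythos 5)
Your proposal is correct and follows essentially the same route as the paper: decompose $f_*\cO_\tS$ into $\tau$-invariant and antiinvariant parts, establish reflexivity of the antiinvariant summand $\cR$ via Serre's condition $\mathbf{S}_2$ (normality of $\tS$) and Hartshorne's criterion, identify $\cR \cong \cO_S(D)$ by Lemma~\ref{lemma:reflexive}, and deduce $2D \lin 0$ from the multiplication map being an isomorphism away from a codimension-$2$ subset. Your slightly more explicit treatment of the extension of the self-duality isomorphism across codimension~$2$ is exactly what the paper's phrase ``the normality of $S$ implies $2D \equiv 0$'' compresses.
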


\begin{proof}
Let $\tau$ be the involution of the double cover.\ It induces an involution of $f_*\cO_\tS$ over $\cO_S$.\ Since we have $\tS/\tau \cong S$, the invariant part is $\cO_S$.\
So, denoting by $\cR$ the antiinvariant part, we have a direct sum decomposition
\begin{equation*}
f_*\cO_\tS \cong \cO_S \oplus \cR.
\end{equation*}
The sheaf $\cR$ satisfies Serre's condition $\mathbf{S}_2$ since $\tS$ is normal, hence it is reflexive by~\cite[Theorem~1.9]{Ha94} 
(since $S$ is normal).\ Since $f$ has degree 2,  {the rank of $\cR$} is 1.\ By Lemma~\ref{lemma:reflexive}, we can therefore write
\begin{equation*}
\cR \cong \cO_S(D)
\end{equation*}
for some Weil divisor class $D$ on $S$.\ Furthermore, over the nonsingular locus of $S$, the map $f$ is \'etale, hence~$\cR$ is locally free 
and the multiplication $ \bm  \colon \cR \otimes \cR \to \cO_S$ is surjective.\ 
It is therefore an isomorphism (over the nonsingular locus of $S$) and the normality of $S$ implies $2D \equiv 0$.

 {If $f$ is \'etale everywhere,   the sheaf $\cR$ is locally free everywhere, 
hence the corresponding divisor class $D$ is Cartier.}
\end{proof}

\begin{lemma} 
\label{lemma:int}
In the situation of Proposition~\textup{\ref{proposition:divisor-cover}}, assume that the scheme $S$ is moreover integral.\ If~$D \notlin 0$, the scheme $\tS$ is also integral.
\end{lemma}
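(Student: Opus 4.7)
The plan is to reduce integrality to connectedness. By Proposition~\ref{proposition:divisor-cover}, the scheme $\tS$ is already known to be normal, and any normal scheme is integral if and only if it is connected (since its connected components are integral). So it suffices to show that the hypothesis $D \notlin 0$ forces $\tS$ to be connected.

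I will argue by contradiction. Assuming $\tS$ is disconnected, normality exhibits it as a disjoint union $\tS = \tS_1 \sqcup \cdots \sqcup \tS_n$ of normal integral schemes. Since $f\colon \tS \to S$ is finite of degree $2$, the images $f(\tS_i)$ are closed subsets of $S$ whose union equals $S$; irreducibility of $S$ then forces each $\tS_i$ to dominate $S$, and comparing degrees forces $n = 2$ with each $\tS_i \to S$ finite of degree $1$. Normality of $S$ promotes such a map to an isomorphism (the integral closure of $\cO_S$ in $K(S)$ is $\cO_S$ itself), so $\tS \cong S \sqcup S$ over $S$.

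Next I analyze the involution $\tau$. If $\tau$ preserves each component, then on each copy of $S$ it is an $S$-automorphism, hence the identity; but this contradicts $\tS/\tau \cong S$. Therefore $\tau$ must swap $\tS_1$ and $\tS_2$. Then the $(\pm 1)$-eigenspace decomposition of $f_*\cO_\tS \cong \cO_S \oplus \cO_S$ under the coordinate swap identifies both eigenspaces with $\cO_S$ via the diagonal and antidiagonal embeddings. Comparing with the $\tau$-eigenspace decomposition $f_*\cO_\tS \cong \cO_S \oplus \cO_S(D)$ of Proposition~\ref{proposition:divisor-cover}, I conclude $\cO_S(D) \cong \cO_S$, so $D \lin 0$, contradicting the hypothesis.

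The only step that genuinely uses the hypothesis $D \notlin 0$ is the final comparison of the two decompositions of $f_*\cO_\tS$; the rest is a routine combinatorial/topological analysis of the disconnected case, and I do not foresee a serious obstacle.
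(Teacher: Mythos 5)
Your argument is correct in outline but takes a genuinely different route from the paper, and it has one under-justified step. The step ``irreducibility of $S$ then forces each $\tS_i$ to dominate $S$'' does not follow from irreducibility alone: a priori one component could map into a proper closed subset of $S$ without affecting the generic rank, so a degree count cannot rule it out either. What does rule it out is that $f_*\cO_\tS \cong \cO_S \oplus \cO_S(D)$ is torsion-free on the integral scheme $S$ (both summands are), whereas a component $\tS_i$ with $f(\tS_i)\subsetneq S$ would contribute a nonzero direct summand $f_{i*}\cO_{\tS_i}$ of $f_*\cO_\tS$ supported on a proper closed subset. With that one-line fix, the rest goes through: normal and disconnected gives $\tS=\tS_1\sqcup\tS_2$ with each $\tS_i\to S$ finite birational onto the normal $S$, hence an isomorphism; $\tau$ cannot preserve the components (else $\tau=\id$ and $\tS/\tau=\tS\ne S$), so it swaps them; and since the characteristic is not $2$, comparing the anti-invariant parts of $f_*\cO_\tS$ for the swap on $\cO_S\oplus\cO_S$ (the antidiagonal copy of $\cO_S$) and for the defining $\tau$-action (the summand $\cR\cong\cO_S(D)$) gives $\cO_S(D)\cong\cO_S$, i.e.\ $D\lin 0$, a contradiction.

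For comparison, the paper does not identify $\tS$ with $S\sqcup S$ nor analyze $\tau$: from a component $S'\subset\tS$ mapping isomorphically to $S$ it extracts an algebra splitting $(1,\gamma)\colon\cO_S\oplus\cR\to\cO_S$, so that $\bm=\gamma\otimes\gamma$; since $\bm$ is an isomorphism off a codimension-$2$ set, so is $\gamma$, and reflexivity of $\cR$ and $\cO_S$ upgrades this to a global isomorphism $\cR\cong\cO_S$, i.e.\ $D\lin 0$. That argument only needs one birational component and trades your eigenspace comparison for the codimension-$2$/reflexivity principle already used throughout the paper; your version is perhaps more transparent once $\tS\cong S\sqcup S$ is established, but it relies on the full component and involution analysis (and on the canonical $\pm1$-eigenspace decomposition, available since $\operatorname{char}\kk\ne 2$).
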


\begin{proof}
If $\tS$ is not integral, there is a subscheme $S' \subset \tS$ such that the map $S' \hra \tS \to S$ is birational.\ Since it is also finite, and $S$ is normal, the map is an isomorphism.\ Therefore, there is a morphism of sheaves $\gamma \colon \cR \to \cO_S$ such that $(1,\gamma) \colon \cO_S \oplus \cR \to \cO_S$ 
is an algebra homomorphism.\ This is equivalent to the equality $\bm = \gamma \otimes \gamma$, where $\bm$ is the map  {from}~\eqref{eq:mu}.\ On the other hand, $\bm$ is an isomorphism over the locally free locus of $\cR$, hence so is $\gamma$.\ In particular, $\gamma$ is an isomorphism over the complement of a subset of codimension~2, 
hence is an isomorphism over the entire scheme $S$, since both sheaves $\cR$ and~$\cO_S$ are reflexive.\ Finally, $\cR \cong \cO_S$ means $D \lin 0$.
\end{proof}

 We finish this section with a discussion of the branch and ramification loci.

\begin{definition}
\label{definition:branch-ramification}
Let $f \colon \tS \to S$ be a double cover of normal schemes which is \'etale over the nonsingular locus of $S$.\ 
Let $\cR$ be the corresponding reflexive sheaf on $S$ and let $\bm \colon \cR \otimes \cR \to \cO_S$ be the multiplication map.\ 
The image of $\bm$ is a sheaf of ideals on $S$.\ 
We call the corresponding subscheme of~$S$ {\sf the branch locus of $f$} and denote it $B(f) \subset S$.\ 
Since $\bm$ is an isomorphism on the nonsingular locus of $S$, we have~$B(f) \subset \Sing(S)$, thus  $B(f)$ has codimension~2 or more.

The natural morphism $\cO_S \oplus \cR \to \cO_{B(f)}$ (restriction on the first summand, and zero on the second)
is a surjective $\cO_S$-algebra homomorphism, hence induces a closed embedding
\begin{equation*}
\iota \colon B(f) = \Spec_S(\cO_{B(f)}) \lra \Spec_S(\cO_S \oplus \cR) = \tS
\end{equation*}
such that $f \circ \iota = \id_{B(f)}$.\
We call the subscheme $R(f) = \iota(B(f)) \subset \tS$ {\sf the ramification locus of $f$}.
\end{definition}

\begin{lemma}
The scheme-theoretic preimage $f^{-1}(B(f)) \subset \tS$ of the branch locus of $f$ is a nonreduced subscheme 
that contains the ramification locus and is contained in its first order infinitesimal neighborhood.
\end{lemma}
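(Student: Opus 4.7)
The plan is to translate the entire statement into a computation of ideal sheaves inside $f_*\cO_\tS = \cO_S \oplus \cR$. Write $\cI := \Ima(\bm) \subset \cO_S$, so by Definition~\ref{definition:branch-ramification} the branch locus $B(f)$ is the closed subscheme cut out by $\cI$. The closed embedding $\iota\colon B(f) \hookrightarrow \tS$ is defined by the surjection $(\cO_S \oplus \cR) \twoheadrightarrow \cO_{B(f)}$ which kills $\cR$ and composes with reduction mod $\cI$ on the first summand, so the sheaf of ideals $\cJ \subset f_*\cO_\tS$ cutting out $R(f) = \iota(B(f))$ is
\[
\cJ = \cI \oplus \cR.
\]

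Next I would compute the scheme-theoretic preimage. By definition, $f^{-1}(B(f))$ is cut out by the ideal $\cI \cdot f_*\cO_\tS$, and since the $\cO_S$-module structure respects the decomposition, this ideal equals
\[
\cI \oplus (\cI \cdot \cR).
\]
Inclusion of this ideal in $\cJ$ gives the first claim, namely $R(f) \subseteq f^{-1}(B(f))$.

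The main step is to identify $\cI \oplus (\cI\cdot\cR)$ with $\cJ^2$. Using the multiplication $(a_1,r_1)\cdot(a_2,r_2) = (a_1a_2+\bm(r_1\otimes r_2),\,a_1r_2+a_2r_1)$ on $\cO_S \oplus \cR$, the square of the ideal $\cJ = \cI \oplus \cR$ is generated by such products with $a_i \in \cI$ and $r_i \in \cR$. The first component lies in $\cI^2 + \bm(\cR \otimes \cR) = \cI^2 + \cI = \cI$ (here is where the definition $\cI = \Ima(\bm)$ is crucial), while the second lies in $\cI \cdot \cR$; conversely both summands are clearly contained in $\cJ^2$. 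Hence
\[
\cJ^2 = \cI \oplus (\cI\cdot\cR),
\]
so the ideal of $f^{-1}(B(f))$ equals $\cJ^2$, which means $f^{-1}(B(f))$ coincides with (hence is contained in) the first-order infinitesimal neighborhood of $R(f)$.

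Finally, for nonreducedness I would observe that the conormal sheaf of $R(f)$ in $\tS$ is
\[
\cJ/\cJ^2 \;\cong\; (\cI \oplus \cR)/(\cI \oplus \cI\cdot\cR) \;\cong\; \cR \otimes_{\cO_S} \cO_{B(f)},
\]
which is nonzero on $B(f)$ (it is the restriction of the rank-1 reflexive sheaf $\cR$ to a subscheme where it remains nontrivial, since $\cR|_{B(f)}$ cannot be annihilated by $\cI$: if it were, then $\bm$ would vanish on $\cR \otimes \cR$, contradicting $\cI = \Ima(\bm)$). Thus $\cJ^2 \subsetneq \cJ$ and $f^{-1}(B(f))$ has strictly more sections than its reduced structure $R(f)$, so it is nonreduced. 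The only subtle point is verifying that $\bm(\cR\otimes\cR)$ really generates $\cI$ (immediate from the definition of $B(f)$) and that $\cR|_{B(f)}$ does not vanish, which will be the routine but essential check.
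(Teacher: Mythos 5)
Your main computation is correct and is essentially the paper's own argument in dual form: the paper passes to the quotient algebra, observing that $f^{-1}(B(f)) \cong \Spec_{B(f)}(\cO_{B(f)} \oplus \cR\vert_{B(f)})$ and that $\cR\vert_{B(f)}$ is a square-zero ideal because $\bm\vert_{B(f)} = 0$, whereas you work with the ideals $\cJ = \cI \oplus \cR$ and $\cI \oplus \cI\cR$ inside $f_*\cO_{\tS}$. Your identity $\cJ^2 = \cI \oplus \cI\cR$ in fact gives slightly more than the statement: the scheme-theoretic preimage \emph{equals} the first order infinitesimal neighborhood, not merely sits inside it (the inclusion $\cI \oplus 0 \subseteq \cJ^2$ uses precisely that $\cI$ is the full image of $\bm$); this equality is what the paper later verifies by hand in the special situations of Lemma~\ref{lemma:veronese-cover} and Proposition~\ref{proposition:covering-smooth}.

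The one step that does not work as written is your justification of nonreducedness. The sheaf $\cR\vert_{B(f)} = \cR/\cI\cR$ is of course annihilated by $\cI$ --- that is not the issue --- and the claim that $\cR\vert_{B(f)} = 0$ would force $\bm$ to vanish on $\cR \otimes \cR$ is a non sequitur: it would only give $\cI = \bm(\cR\otimes\cR) = \bm(\cI\cR \otimes \cR) \subseteq \cI^2$. What you actually need is that $\cR\vert_{B(f)} \neq 0$ at every point of $B(f)$ (granting $B(f) \neq \emptyset$, which the lemma tacitly assumes). This is easy but should be said correctly: $\cR$ is a rank-$1$ reflexive sheaf on the normal scheme $S$, hence torsion-free on each (integral) connected component, so $\cR_x \neq 0$ for every point $x$; since $\cI_x \subseteq \fm_x$ for $x \in B(f)$, Nakayama gives $\cR_x/\cI_x\cR_x \neq 0$. (Alternatively, $\cI_x = \cI_x^2$ would force $\cI_x = 0$ by Nakayama, contradicting $\codim B(f) \ge 2$.) With that repair, and noting that the paper's own proof leaves this point implicit as well, your argument is complete.
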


\begin{proof}
The scheme-theoretic preimage of $B(f)$ is isomorphic to $\Spec_{B(f)}(\cO_{B(f)} \oplus \cR\vert_{B(f)})$.\ Since the multiplication map $\bm\vert_{B(f)}$ is zero, the summand $\cR\vert_{B(f)}$ is a square-zero ideal,
while the first summand~$\cO_{B(f)}$ corresponds to the structure sheaf of $R(f)$.\ Therefore, $f^{-1}(B(f))$ contains $R(f)$ and is contained in its first order infinitesimal neighborhood.
\end{proof}

\section{Double covers of quadratic degeneracy loci}\label{sec3}

Let $S$ be a scheme, let $\cE$ be a vector bundle of rank $m$ on $S$, and let $\cL$ be a line bundle on $S$.
Consider a family of quadratic forms
\begin{equation*}
q \colon \cL \lra \Sym^2(\cE^\vee) 
\end{equation*}
on $S$.
We denote also by $q$ the associated map $\cL \otimes \cE \to \cE^\vee$ and by $\cC$ its cokernel.\
We have an exact sequence
\begin{equation}\label{eq:def-c}
\cL \otimes \cE \xrightarrow{\ q\ } \cE^\vee \lra \cC \lra 0.
\end{equation}
For any nonnegative integer $k$, we let $S_k \subset S$ be the corank-$k$ {\sf degeneracy locus} of $q$ 
with its natural scheme structure defined by the minors of $q$ of size $m + 1 - k$ 
and we let $\cC_k$ be the restriction  of the sheaf $\cC$ to   $S_k $.
Its further restriction to the open subscheme
 \begin{equation*}
S_k^0 := S_k \setminus S_{k+1} 
\end{equation*}
  is locally free of rank~$k$.

\subsection{Double covers}
\label{subsection:covers-quadratic}

The main result of this section is the construction of natural double covers of the schemes~$S_k$.\ Assume that $S_k$ is normal and that $S_k^0$ is dense in $S_k$.\ We consider the rank-1 reflexive sheaf 
\begin{equation}
\label{eq:rk-ck}
\cR_k := (\bw{k}\cC_k)^{\vee\vee}
\end{equation} 
on $S_k$; it is {invertible} on $S_k^0$.

\begin{theorem}\label{theorem:covering-quadratic}
Assume that $S_k$ is normal and that $\codim_{S_k}(S_{k+1}) \ge 2$.\ For each line bundle $\cM$ on~$S_k$ such that
\begin{equation}  
\label{eq:cm-cl}
\cL ^{\otimes(m-k)}\vert_{S_k} \cong \cM^{\otimes 2},
\end{equation}
there is a double cover $f_\cM \colon \tS_k \to  S_k$,  with  $\tS_k$ normal, that satisfies the following two properties:
\begin{itemize}
\item[\textnormal{(a)}] there is an isomorphism
\begin{equation*}
 f_{\cM*}\cO_{\tS_k} \cong \cO_{S_k} \oplus (\cM \otimes \cR_k \otimes \det(\cE)\vert_{S_k}),
\end{equation*}
\item[\textnormal{(b)}] the morphism $f_\cM$ is \'etale over the dense open subset $S_k^0 \subset S_k$.\
\end{itemize}
If all invertible functions on $S_k$ are  squares, such a double cover is unique up to isomorphism.
\end{theorem}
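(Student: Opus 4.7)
The plan is to apply Proposition~\ref{proposition:divisor-cover} to the rank-$1$ reflexive sheaf
$$\cN := \cM \otimes \cR_k \otimes \det(\cE)\vert_{S_k}$$
on $S_k$, for which the essential task is to construct a self-duality isomorphism $(\cN \otimes \cN)^{\vee\vee} \cong \cO_{S_k}$. Since both sides are reflexive on the normal scheme $S_k$, and since $\codim_{S_k}(S_k \setminus S_k^0) \ge 2$, it is enough to produce an isomorphism on the open subscheme $S_k^0 \subset S_k$ and extend uniquely by reflexivity.

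On $S_k^0$, the cokernel $\cC_k$ is locally free of rank $k$, and the kernel $\cK \subset \cE\vert_{S_k^0}$ of the morphism $\cE \to \cL^\vee \otimes \cE^\vee$ induced by $q$ is a rank-$k$ subbundle. The symmetry of $q$, together with the exact sequence~\eqref{eq:def-c}, shows that the image of $q \colon \cL \otimes \cE \to \cE^\vee$ is precisely the annihilator of $\cK$ inside $\cE^\vee$, so that
$$\cC_k\vert_{S_k^0} \cong \cK^\vee
\qquad \text{and hence} \qquad
\cR_k\vert_{S_k^0} \cong (\det\cK)^{-1}.$$
Moreover, $q$ descends to a nondegenerate $\cL^\vee$-valued symmetric bilinear form on $\cE/\cK$ over $S_k^0$, and taking its top exterior power furnishes an isomorphism
$$\det(\cE/\cK)^{\otimes 2} \isomto \cL^{-(m-k)}\vert_{S_k^0}.$$
Combined with the identity $\det(\cE/\cK) = \det(\cE) \otimes (\det\cK)^{-1}$ and the hypothesis $\cM^{\otimes 2} \cong \cL^{\otimes(m-k)}\vert_{S_k}$, a short computation yields $\cN^{\otimes 2}\vert_{S_k^0} \cong \cO_{S_k^0}$, which extends uniquely to the desired self-duality isomorphism on $S_k$.

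By Lemma~\ref{lemma:reflexive} and Corollary~\ref{corollary:self-duality}, the self-dual rank-$1$ reflexive sheaf $\cN$ corresponds to a $2$-torsion Weil divisor class $D$ on $S_k$. Proposition~\ref{proposition:divisor-cover} then produces a normal double cover $f_\cM \colon \tS_k \to S_k$ with $f_{\cM*}\cO_{\tS_k} \cong \cO_{S_k} \oplus \cN$, proving~(a), which is \'etale over the Cartier locus of $D$, that is, over the locally free locus of $\cN$. Since $\cR_k\vert_{S_k^0}$ is invertible and $\cM$, $\det(\cE)\vert_{S_k}$ are line bundles, this locally free locus contains $S_k^0$, proving~(b). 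For the uniqueness statement, when every invertible function on $S_k$ is a square the torsor group $H^0(S_k,\cO_{S_k}^\times)/H^0(S_k,\cO_{S_k}^\times)^2$ from Proposition~\ref{proposition:divisor-cover} is trivial, so the cover is unique up to isomorphism.

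The main technical point is the self-duality construction: one must carefully bookkeep the $\cL$-twists to identify $\cC_k\vert_{S_k^0}$ with $\cK^\vee$ (rather than a twist thereof) and combine this with the determinantal identity coming from the restricted nondegenerate form on $\cE/\cK$. Once this is in place, both the existence and the properties~(a),~(b) of the cover follow directly from the formalism developed in the previous section.
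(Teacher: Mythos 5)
Your proposal is correct and follows essentially the same route as the paper: the paper also restricts to $S_k^0$, forms the quotient bundle $\cE_k=(\cE/\cC_k^\vee)\vert_{S_k^0}$ (which is your $\cE/\cK$, since $\cC_k\vert_{S_k^0}\cong\cK^\vee$), uses the determinant of the induced nondegenerate form to get a self-duality of $(\cM\otimes\cR_k\otimes\det(\cE))\vert_{S_k^0}$, extends it uniquely across the codimension-$2$ complement by reflexivity, and then invokes Proposition~\ref{proposition:divisor-cover} for existence, properties (a)--(b), and uniqueness. The only difference is notational bookkeeping (kernel $\cK$ versus the subbundle $\cC_k^\vee\subset\cE$), which amounts to the same canonical isomorphism $\det(\cE_k)\cong(\cR_k\otimes\det(\cE))\vert_{S_k^0}$.
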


\begin{proof}
The map $\cE^\vee\vert_{S_k^0} \to  \cC_k\vert_{S_k^0}$ is an epimorphism of vector bundles, 
so we may consider $\cC_k^\vee\vert_{S_k^0}$ as a subbundle of $\cE\vert_{S_k^0}$.\
Set
\begin{equation}
\label{eq:cek}
\cE_k := (\cE\vert_{S_k^0})/(\cC_k^\vee\vert_{S_k^0}) 
\qquad{\textnormal{and}}\qquad 
\cL_k := \cL\vert_{S_k^0}.
\end{equation}
Note that $\cE_k$ is a vector bundle of rank $m - k$ on $S_k^0$ with a canonical isomorphism
\begin{equation}
\label{eq:det-cek}
\det(\cE_k) \cong (\cR_k \otimes \det(\cE))\vert_{S_k^0}.
\end{equation}
The quadratic form $q$ induces a canonical isomorphism
\begin{equation}
\label{eq:qk}
 q_k \colon \cL_k \otimes \cE_k \xrightarrow{\ \sim\ } \cE_k^\vee 
\end{equation}
of sheaves over ${S_k^0}$.\ Its determinant gives a canonical isomorphism
\begin{equation*}
 \det(q_k) \colon (\cL^{\otimes(m-k)} \otimes \cR_k \otimes \det(\cE))\vert_{S_k^0} \xrightarrow{\ \sim\ } (\cR_k \otimes \det(\cE))\vert_{S_k^0}^\vee.
\end{equation*}
In particular, for each square root $\cM$ of the line bundle $\cL^{\otimes(m-k)}\vert_{S_k} $, 
this gives a self-duality isomorphism for the line bundle $(\cM  \otimes \cR_k \otimes \det(\cE))\vert_{S_k^0}$.
Since $\codim_{S_k}(S_{k+1}) \ge 2$, it extends uniquely to a self-duality 
on the rank-1 reflexive sheaf $\cM  \otimes \cR_k \otimes \det(\cE)\vert_{S_k}$ on $S_k$.
By Proposition~\ref{proposition:divisor-cover}, this gives a double cover~$f_\cM \colon \tS_k \to S_k$,  with $\tS_k$ normal, 
that satisfies properties (a) and (b).\ The uniqueness also follows from Proposition~\ref{proposition:divisor-cover}.
 \end{proof}

\begin{remark}\label{remark:unique-cover-quadratic}
 If $m - k$ is even, there is a natural choice for the line bundle $\cM$ in~\eqref{eq:cm-cl}, namely 
\begin{equation*}
\cM := \cL^{\otimes((m-k)/2)}\vert_{S_k}.
\end{equation*}
Similarly,  if  $\cL = \cO_S$, there is also, for any $m - k$, a canonical choice
\begin{equation*}
\cM := \cO_{S_k}.
\end{equation*}
In any of these situations, with this choice of $\cM$, the double cover of Theorem~\ref{theorem:covering-quadratic} becomes completely canonical.
We will refer to it as the {\sf canonical double cover} of the quadratic degeneracy locus $S_k$.
\end{remark}

When $k$ is even, we can relate the reflexive sheaf $\cR_k$ (and hence the sheaf giving the double cover) 
to the canonical class of $S_k$.\
For a coherent sheaf $\cF$ on a normal scheme $S$, we denote by $\rc_1(\cF)$ the Weil divisor class on $S$ 
corresponding to the Cartier divisor class $\rc_1({\det(\cF\vert_{S^0})})$ on the nonsingular locus $S^0  $ of $ S$.

\begin{lemma}
\label{lemma:ksk-quadratic} 
Assume that $S_k$ is normal, $\codim_S(S_k) = k(k+1)/2$, and $\codim_{S_k}(S_{k+1}) \ge 2$.\ 
There is {an equality} 
of Weil divisor classes
\begin{equation*}
K_{S_k} = 
\begin{cases}
K_S\vert_{S_k} + \rc_1(\cR_k) - k\rc_1(\cE) - \tfrac{k(m-k)}2 \rc_1(\cL) & \text{if $k$ is even,}\\
K_S\vert_{S_k} - (k+1)\rc_1(\cE) - \tfrac{(k+1)(m-k)}2 \rc_1(\cL) & \text{if $k$ is odd.}
\end{cases}
\end{equation*}
\end{lemma}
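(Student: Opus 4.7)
The strategy is to resolve $S_k$ by a Grassmannian bundle and apply adjunction.\ Let $G:=\Gr_S(k,\cE)$ with projection $p\colon G\to S$ and tautological subbundle $\cU\subset p^*\cE$, and let $Z_k\subset G$ be the subscheme cut out by the condition that $\cU$ lies in the radical of $q$, i.e., that the composition $p^*\cL\otimes\cU\hookrightarrow p^*\cL\otimes p^*\cE\xrightarrow{p^*q}p^*\cE^\vee$ vanishes.\ The induced morphism $p_Z\colon Z_k\to S_k$ is an isomorphism over $S_k^0$, since there the $k$-dimensional radical is uniquely determined.\ As $\codim_{S_k}(S_{k+1})\ge2$ by hypothesis, the Weil divisor $K_{S_k}$ is determined by $K_{S_k^0}$, and it suffices to compute $K_{Z_k}$ on $p_Z^{-1}(S_k^0)\cong S_k^0$.

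The key technical point is that $Z_k$ is the zero locus of a regular section of a bundle $\cN$ on $G$.\ Symmetry of $q$ forces the section of $\cU^\vee\otimes p^*\cE^\vee\otimes p^*\cL^{-1}$ that defines $Z_k$ to land in the subbundle $\cN$ fitting in the short exact sequence
\begin{equation*}
0 \to \cU^\vee\otimes\cQ^\vee\otimes p^*\cL^{-1} \to \cN \to \Sym^2\cU^\vee\otimes p^*\cL^{-1} \to 0,
\end{equation*}
where $\cQ=p^*\cE/\cU$.\ The rank of $\cN$ is $k(m-k)+\binom{k+1}{2}$, which matches $\codim_G(Z_k)=\codim_S(S_k)+k(m-k)$ under the hypothesis $\codim_S(S_k)=\binom{k+1}{2}$; the section is therefore regular near $p_Z^{-1}(S_k^0)$, and adjunction yields $K_{Z_k}=(K_G+\rc_1(\cN))|_{Z_k}$.\ Plugging in the formula $K_G=p^*K_S+m\,\rc_1(\cU)-k\,p^*\rc_1(\cE)$ for the canonical class of a relative Grassmannian and computing $\rc_1(\cN)$ from the exact sequence above expresses $K_{Z_k}$ in terms of $\rc_1(\cU)$ and pulled-back classes from $S$.

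To finish, we restrict to $p_Z^{-1}(S_k^0)\cong S_k^0$ and invoke the identification $\cU|_{S_k^0}\cong\cC_k^\vee|_{S_k^0}$, which follows from the isomorphism $\ker(q\colon\cL\otimes\cE\to\cE^\vee)=\cL\otimes\cC_k^\vee$ induced by the symmetry of $q$.\ This gives $\rc_1(\cU)|_{S_k^0}=-\rc_1(\cR_k)$ and produces a single formula for $K_{S_k^0}$ in which $\rc_1(\cR_k)$ appears with coefficient $(k-1)$.\ The two cases in the statement are then obtained by applying the identity $2\,\rc_1(\cR_k)+2\,\rc_1(\cE)+(m-k)\,\rc_1(\cL)=0$ on $S_k^0$ (coming from taking $\rc_1$ of the self-duality isomorphism produced by $\det(q_k)$ in the proof of Theorem~\ref{theorem:covering-quadratic}): when $k$ is even, this relation rewrites $k\,\rc_1(\cR_k)$ and leaves a residual $\rc_1(\cR_k)$ term in the final expression; when $k$ is odd, the even coefficient $(k-1)\,\rc_1(\cR_k)$ is eliminated entirely, accounting for the dichotomy in the statement.

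The main obstacle is to verify that the defining section of $\cN$ is regular in a neighborhood of $p_Z^{-1}(S_k^0)$ so that adjunction is applicable; this should follow from the codimension assumption via an infinitesimal dimension count on the tangent spaces of $Z_k$ and $G$, though care is needed in handling the scheme structure where the fibers of $p_Z$ jump, namely along $S_{k+1}$.
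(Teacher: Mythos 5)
Your Grassmannian-bundle route is a legitimate alternative to the paper's argument, and in fact a more explicit version of it: the paper's proof simply asserts that the normal bundle of $S_k^0$ in $S$ is $\Sym^2(\cC_k)$, applies adjunction on $S_k^0$, and then uses the relation $2\rc_1(\cR_k)+2\rc_1(\cE)+(m-k)\rc_1(\cL)=0$ coming from $\det(q_k)$ in the proof of Theorem~\ref{theorem:covering-quadratic}, together with $\codim_{S_k}(S_{k+1})\ge2$, to pass to Weil classes on $S_k$ --- exactly the two last steps of your proposal. Your detour through $Z_k\subset\Gr_S(k,\cE)$ replaces the normal-bundle assertion by adjunction for the zero locus of a section of $\cN$, and the point you worry about at the end is unproblematic: over $S\setminus S_{k+1}$ the scheme $Z_k$ is exactly the section of $\Gr_S(k,\cE)$ over $S_k^0$ determined by $\cC_k^\vee\subset\cE\vert_{S_k^0}$, so its codimension there equals $\rank(\cN)=k(m-k)+\tfrac{k(k+1)}2$ and the section is regular near $p_Z^{-1}(S_k^0)$, granting (as the paper's proof also implicitly does) enough regularity of $S$, say Cohen--Macaulay and Gorenstein near $S_k$, for $K_S$ and adjunction to make sense.

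The genuine problem is the final bookkeeping, which you assert rather than carry out, and which does not reproduce the displayed coefficients. With your (correctly twisted) $\cN$ one finds
\begin{equation*}
K_{Z_k}=p^*K_S+(k-1)\,\rc_1(\cU)-2k\,p^*\rc_1(\cE)-\Bigl(k(m-k)+\tfrac{k(k+1)}2\Bigr)p^*\rc_1(\cL)
\end{equation*}
(so after substituting $\rc_1(\cU)\vert_{S_k^0}=-\rc_1(\cR_k)$ the coefficient of $\rc_1(\cR_k)$ is $-(k-1)$, not $(k-1)$), and applying the $\det(q_k)$-relation gives $K_{S_k^0}=K_S\vert_{S_k^0}+(k+1)\rc_1(\cR_k)-\tfrac{k(k+1)}2\rc_1(\cL)$, hence the two stated formulas with an \emph{extra} term $-\tfrac{k(k+1)}2\rc_1(\cL)$ in both cases. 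The source of the discrepancy is the $\cL^{-1}$-twist: your method computes the normal bundle of $S_k^0$ in $S$ as $\Sym^2(\cC_k)\otimes\cL^\vee\vert_{S_k^0}$ (consistent with Definition~\ref{definition:p-regular} and Lemma~\ref{lemma:regularity}, where $dq$ lands in $\Sym^2(K^\vee)$ only after tensoring $T_{S,s}$ with $\cL_s$), whereas the paper's proof uses $\Sym^2(\cC_k)$ untwisted. A sanity check with $m=k=1$ shows the twisted answer is the correct one: there $S_1$ is the zero divisor of a section of $\cE^{\vee\otimes2}\otimes\cL^\vee$, so ordinary adjunction gives $K_{S_1}=(K_S-2\rc_1(\cE)-\rc_1(\cL))\vert_{S_1}$, which carries the extra $-\rc_1(\cL)$. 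So, as written, your proposal does not establish the statement as displayed: carried out honestly it yields the formula with the additional $-\tfrac{k(k+1)}2\rc_1(\cL)$, which agrees with the statement only when $\rc_1(\cL)\vert_{S_k}$ vanishes (e.g.\ $\cL$ trivial, which is the case in all the paper's applications). The missing step is to confront this mismatch explicitly --- either locate a compensating twist (there is none) or state the corrected $\cL$-coefficients --- rather than asserting that the stated dichotomy ``falls out'' of the computation.
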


\begin{proof}
It is easy to see that the normal bundle to $S_k^0$ in $S$ is isomorphic to the sheaf~$\Sym^2(\cC_k)$.\
Therefore, we have an equality of Cartier divisor classes
\begin{equation*}
K_{S_k^0} = K_S\vert_{S_k^0} + \rc_1(\Sym^2(\cC_k)) = K_S\vert_{S_k^0} + (k+1)\rc_1(\cC_k) = K_S\vert_{S_k^0} + (k+1)\rc_1(\cR_k).
\end{equation*}
The proof of Theorem~\ref{theorem:covering-quadratic} shows that on $S_k^0$, we have {an equality} 
\begin{equation*}
2\rc_1(\cR_k) + 2\rc_1(\cE) + (m-k)\rc_1(\cL) = 0.
\end{equation*}
When $k$ is odd, we can use this to rewrite~$(k+1)\rc_1(\cR_k)$ as a linear combination of~$\rc_1(\cE)$ and~$\rc_1(\cL)$,
and when $k$ is even, we do the same for~$k\rc_1(\cR_k)$.\
Then we treat the equality of Cartier divisor classes obtained on~$S_k^0$ as an equality of Weil divisor classes on~$S_k$,
which gives the required formulas.
\end{proof}

\subsection{Smoothness criteria}

Let us first discuss a prototypical example of a double cover.

Let $k$ be a positive integer,
let $V$ be a $\kk$-vector space of dimension $k+1$, and let 
\begin{equation*}
\SV := \Sym^2(V^\vee)  \isom \A_\kk^{(k+1)(k+2)/2}
\end{equation*}
be the affine space of all quadratic forms on~$V$.\
Consider the trivial bundles 
\begin{equation*}
\cE := V \otimes \cO_{\SV}
\qquad\text{and}\qquad
\cL := \cO_{\SV}, 
\end{equation*}
and let $q \colon \cL \to \Sym^2(\cE^\vee)$ be the universal quadratic form.\ The corresponding $k$-th degeneration scheme~$\SV_k \subset \Sym^2(V^\vee)$ is the affine cone 
over the double Veronese embedding $\P(V^\vee) \subset \P(\SV)$ and the~$(k+1)$-th degeneration scheme is the single point  $\SV_{k+1} = \{0\}$,
   both   with the reduced scheme structure.\ Since the line bundle $\cL$ is trivial, Theorem~\ref{theorem:covering-quadratic} provides a canonical double cover of~$\SV_k$.

\begin{lemma}
\label{lemma:veronese-cover}
The map $V^\vee \to   \SV$ that takes a linear form to its square factors as
\begin{equation}
\label{eq:veronese-map}
V^\vee \xrightarrow{\ f_0\ } \SV_k \lhra \SV,
\end{equation}
where $f_0$ is a double cover branched over the vertex of the cone.\ Moreover,~$f_0$ agrees with the canonical double cover of $\SV_k$ 
provided by Theorem~\textup{\ref{theorem:covering-quadratic}}.\

The branch locus of $f_0$ is the reduced point $\{0\} \in \SV$, 
its ramification locus is the reduced point $\{0\} \in V^\vee$,
and the preimage of the branch locus is the first order infinitesimal neighborhood of the ramification locus.
\end{lemma}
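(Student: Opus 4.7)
My plan is to compare $f_0$ with the double cover produced by Theorem~\ref{theorem:covering-quadratic} via an explicit computation on the smooth open $S_k^0=\SV_k\setminus\{0\}$. First, any $\ell\in V^\vee$ has $\ell^2$ of rank at most~$1$, so the image of $\ell\mapsto\ell^2$ lies in $\SV_k$, yielding the factorization~\eqref{eq:veronese-map}. The involution $\tau(\ell)=-\ell$ commutes with $f_0$, and the pullback $f_0^*\colon\kk[\SV]=\Sym^\bullet(\Sym^2V)\to\kk[V^\vee]=\Sym^\bullet V$ has image equal to the even-degree subring $\kk[V^\vee]_+$; thus $\kk[\SV_k]\cong\kk[V^\vee]_+$, identifying $\SV_k$ with the GIT quotient $V^\vee/\tau$. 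The $\tau$-eigenspace decomposition $\kk[V^\vee]=\kk[V^\vee]_+\oplus\kk[V^\vee]_-$ then shows $f_0$ is a double cover in the sense of Definition~\ref{dc}, with
\begin{equation*}
(f_0)_*\cO_{V^\vee}\;\cong\;\cO_{\SV_k}\oplus\cR,
\end{equation*}
where $\cR$ is the rank-$1$ reflexive sheaf on $\SV_k$ attached to the $\kk[\SV_k]$-module $\kk[V^\vee]_-$; near any $\ell_0^2\in S_k^0$, the sheaf $\cR$ is freely generated by any $e\in V$ with $\ell_0(e)\ne 0$.

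Next, to identify $f_0$ with the canonical cover (available with $\cM=\cO_{S_k}$ by Remark~\ref{remark:unique-cover-quadratic}, since $\cL=\cO$ and $m-k=1$), I would match $\cR$ with the self-dual sheaf $\cR_k\otimes\det(\cE)\vert_{S_k}$. Since $\SV_k$ is normal and $\SV_{k+1}=\{0\}$ has codimension $k+1\ge 2$, it suffices to compare them (together with their self-dualities) on $S_k^0$. At $\ell_0^2\in S_k^0$, the form $\ell_0^2\colon V\to V^\vee$ has image $\kk\ell_0$, so $\cC_k\vert_{\ell_0^2}=V^\vee/\kk\ell_0$; using the short exact sequence $0\to\kk\ell_0\to V^\vee\to V^\vee/\kk\ell_0\to 0$ together with $\det V\otimes\det V^\vee\cong\kk$, one finds $(\cR_k\otimes\det\cE)\vert_{\ell_0^2}\cong(\kk\ell_0)^\vee\cong V/\ker(\ell_0)$, the same line as $\cR\vert_{\ell_0^2}$. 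Both self-dualities restrict at $\ell_0^2$ to the symmetric pairing $[v]\otimes[w]\mapsto\ell_0(v)\ell_0(w)$ on $V/\ker(\ell_0)$: on the $\cR$ side via ring multiplication $v\cdot w=vw\in\Sym^2V$ followed by evaluation at $\ell_0^2$, and on the $\cR_k\otimes\det\cE$ side via $\det(q_k)\vert_{\ell_0^2}$, since $q_k\vert_{\ell_0^2}\colon V/\ker(\ell_0)\to\kk\ell_0$ sends $[v]\mapsto\ell_0(v)\ell_0$. This eliminates the potential $\kk^\times/(\kk^\times)^2$-ambiguity in Proposition~\ref{proposition:divisor-cover} and shows the two double covers agree.

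For the final assertions, I invoke Definition~\ref{definition:branch-ramification}. The multiplication $\bm\colon\cR\otimes\cR\to\cO_{\SV_k}$ is the ring product $\kk[V^\vee]_-\otimes\kk[V^\vee]_-\to\kk[V^\vee]_+$, whose image is the ideal $I$ of $\kk[V^\vee]_+$ generated by $\Sym^2V$, namely the graded maximal ideal of the affine cone $\SV_k$; hence $B(f_0)=\{0\}\subset\SV_k$ with the reduced structure. The ramification locus corresponds to the algebra surjection $\cO_{\SV_k}\oplus\cR\twoheadrightarrow\cO_{\SV_k}/I=\kk$, whose kernel in $\kk[V^\vee]$ equals $I\oplus\kk[V^\vee]_-=\fm_0$ with $\fm_0:=\Sym^{\ge 1}V$ the maximal ideal of the origin, so $R(f_0)=\{0\}\subset V^\vee$ is reduced. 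Finally, $f_0^{-1}(B(f_0))$ has coordinate ring $\kk[V^\vee]/(I\cdot\kk[V^\vee])=\kk[V^\vee]/\fm_0^2$, exactly the first order infinitesimal neighborhood of $0\in V^\vee$.

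The hard part is the middle paragraph: matching not just the reflexive sheaves but the accompanying self-duality isomorphisms, so that the two constructions give genuinely isomorphic covers rather than covers that could a priori differ by a square class in $\kk^\times$. The key observation is that both pairings reduce at $\ell_0^2$ to the evaluation pairing $[v]\otimes[w]\mapsto\ell_0(v)\ell_0(w)$ on $V/\ker(\ell_0)$, which fixes the normalization.
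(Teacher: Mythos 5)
Your proposal is correct and follows essentially the same route as the paper's proof: identify $\SV_k=\Spec(\kk[V^\vee]_+)$ with the anti\-invariant module $\kk[V^\vee]_-$ giving the reflexive sheaf of the cover $\ell\mapsto\ell^2$, match this with the self-dual sheaf of Theorem~\ref{theorem:covering-quadratic} by showing that the reduced quadratic form $q_k$ is exactly the ring multiplication $\kk[V^\vee]_-\otimes\kk[V^\vee]_-\to\kk[V^\vee]_+$ (thereby fixing the normalization and killing the $\kk^\times/(\kk^\times)^2$-ambiguity), extend from $\SV_k^0$ to $\SV_k$ by normality and codimension~$2$, and read off the branch, ramification, and preimage data from the image of the multiplication map. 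The only cosmetic difference is that you check the agreement of the two pairings fiberwise at points $\ell_0^2\in \SV_k^0$, whereas the paper identifies $\cE_k\cong\kk[V^\vee]_-$ once and for all at the level of modules; since your pointwise identifications are induced by the natural surjections from $V\otimes\cO_{\SV_k^0}$ onto both sheaves (with the same kernel), they glue to a global isomorphism compatible with the pairings, so the argument is complete.
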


\begin{proof}
 The factorization through $\SV_k$ is clear and the map $f_0$ is a double cover because it is the quotient for the $\Z/2$-action on $V^\vee$ defined by $\ell \mapsto -\ell$.
 
 Denote by $\kk[V^\vee]_+$ and $\kk[V^\vee]_-$ the invariant and   antiinvariant parts of~$\kk[V^\vee]$
with respect to this $\Z/2$-action, so that 
\begin{equation*}
\kk[V^\vee] = \kk[V^\vee]_+ \oplus \kk[V^\vee]_-.
\end{equation*}
Then $\SV_k \cong \Spec(\kk[V^\vee]_+)$, {and} under the identification of sheaves on $\SV_k$ with $\kk[V^\vee]_+$-modules,
this direct sum provides the decomposition of $f_{0*}\cO_{V^\vee}$ into   invariant and antiinvariant parts.\
 {As explained in the proof of Lemma~\ref{lemma:cover-divisor}}, the reflexive sheaf $\cR $ corresponding to the double cover $f_0$ 
is the sheaf associated with~$\kk[V^\vee]_-$,  considered as a $\kk[V^\vee]_+$-module.

 {Consider the natural   morphisms 
\begin{equation*}
(V \otimes \kk[V^\vee]_+) \otimes (V \otimes \kk[V^\vee]_+) \to \kk[V^\vee]_- \otimes \kk[V^\vee]_- \to \kk[V^\vee]_+ 
\end{equation*}
of $\kk[V^\vee]_+$-modules, 
where both maps are induced by the multiplication inside $\kk[V^\vee]$ (note that $V$ is the space of linear functions on~$V^\vee$).\ Their composition coincides with the restriction of $q$ to $\SV_k$.\ Therefore, the map $q\vert_{\SV_k} \colon \cE\vert_{\SV_k} \to \cE\vert_{\SV_k}^\vee$ can be rewritten as the composition
\begin{equation*}
V \otimes \kk[V^\vee]_+ \to \kk[V^\vee]_- \to V^\vee \otimes \kk[V^\vee]_+,
\end{equation*}
where the first map is the multiplication and the second is its transposed.\ 
 {Therefore, over the open subset} $\SV_k^0=\SV_k\setminus\{0\}$, 
the sheaf $\cE_k = \Im(q\vert_{\SV_k^0})$ can be identified with $\kk[V^\vee]_-$  
and the induced family of quadratic forms   is given by the multiplication in $\kk[V^\vee]_-$ with values in $\kk[V^\vee]_+$.\ 
This means that the algebra structure on~$\cO \oplus \cR_k$ defined in Theorem~\ref{theorem:covering-quadratic} 
coincides over $\SV_k^0$ with the natural algebra structure on~\mbox{$\kk[V^\vee] = \kk[V^\vee]_+ \oplus \kk[V^\vee]_-$}.\ 
Thus, the double covers agree  over $\SV_k^0$, and since~$V^\vee$ is normal, they agree everywhere.}

Finally,  {by Definition~\ref{definition:branch-ramification},} the ideal of the branch locus $B(f_0)$ is the image of the multiplication map 
\begin{equation*}
\kk[V^\vee]_- \otimes \kk[V^\vee]_- \lra  \kk[V^\vee]_+.
\end{equation*}
It is generated by all monomials of degree~2 in the coordinates on $V^\vee$, that is, by the coordinates on $\SV$.\ Thus, $B(f_0)$ is the origin in $\SV$ with the reduced scheme structure.\ Consequently, the ramification locus $R(f_0)$ is the origin in $V^\vee$ with the reduced scheme structure as well.\ Since the map~$f_0$ is given by all monomials of degree~2, the scheme-theoretic preimage of the branch locus is the first order infinitesimal neighborhood of $R(f_0)=\{0\}$.
\end{proof}

 We will need the following definition.

\begin{defi}
\label{definition:p-regular}
A family of quadratic forms $q \colon \cL \to \Sym^2(\cE^\vee)$ over a scheme $S$ is called {\sf $p$-regular} 
at a point $s \in S$ if $s$ is nonsingular on $S$ and, for any subspace $K \subset \Ker(q_s) \subset \cE_s$ 
such that $\dim(K) \le p$, the canonical morphism
\begin{equation*}
dq \colon T_{S,s} \otimes \cL_s \to 
 \Sym^2(\cE_s^\vee) \to \Sym^2(K^\vee)
\end{equation*}
is surjective, where $T_{S,s}$ is the tangent space of $S$ at $s$.
 \end{defi}

\begin{lemma}
\label{lemma:regularity}
Let $s $ be a  point of $ S_k \setminus S_{k+1}$ nonsingular on $S$ and let $p$ be an integer such that $p\ge k $.\
Then $q$ is $p$-regular at $s$ if and only if $S_k$ is nonsingular of \textup(expected\textup) codimension $k(k+1)/2$ in~$S$ at~$s$.
\end{lemma}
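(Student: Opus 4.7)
The plan is to exhibit $S_k$ locally near $s$ as the zero locus of $k(k+1)/2$ regular functions and then to identify their differentials at $s$ with the map appearing in the definition of $p$-regularity.

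First, since $p \ge k = \dim \Ker(q_s)$, the $p$-regularity condition at $s$ is equivalent to its instance for the single subspace $K = \Ker(q_s)$: for any $K' \subset \Ker(q_s)$, surjectivity onto $\Sym^2((K')^\vee)$ is implied by surjectivity onto $\Sym^2(\Ker(q_s)^\vee)$ via the natural restriction surjection between the two symmetric squares. Thus it suffices to prove that surjectivity of
\begin{equation*}
T_{S,s} \otimes \cL_s \xrightarrow{\ dq\ } \Sym^2(\cE_s^\vee) \twoheadrightarrow \Sym^2(\Ker(q_s)^\vee)
\end{equation*}
is equivalent to nonsingularity of $S_k$ of codimension $k(k+1)/2$ at $s$.

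Next, I would work in an affine open neighborhood of $s$, trivializing $\cL$ and choosing a direct sum decomposition $\cE = \cE_1 \oplus \cE_2$ with $\cE_2|_s = \Ker(q_s)$ and $\cE_1|_s$ a complement. In this splitting, $q$ is represented by a symmetric block matrix $\bigl(\begin{smallmatrix} A & B \\ B^t & C \end{smallmatrix}\bigr)$ with $A(s)$ nondegenerate and $B(s) = C(s) = 0$. After further shrinking so that $\det A$ is invertible, the congruence $P^t (\cdot) P$ with $P = \bigl(\begin{smallmatrix} I & -A^{-1}B \\ 0 & I \end{smallmatrix}\bigr)$ transforms $q$ into the block-diagonal form $\mathrm{diag}(A, M)$, where $M := C - B^t A^{-1} B$ is the Schur complement. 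Since this congruence preserves the ideal of $(m-k+1)$-minors and $\det A$ is a unit, one checks that this ideal is generated by the $k(k+1)/2$ entries of the symmetric matrix $M$; hence $M = 0$ defines $S_k$ scheme-theoretically in a neighborhood of $s$.

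Finally, I would compute $dM(s)$. Since $B(s) = 0$, the quadratic term $B^t A^{-1} B$ contributes nothing to the first derivative at $s$, so $dM(s) = dC(s)$; and $C$ is by construction the image of $q$ under the surjection $\Sym^2(\cE^\vee) \twoheadrightarrow \Sym^2(\cE_2^\vee)$, whose fiber at $s$ is the restriction map $\Sym^2(\cE_s^\vee) \twoheadrightarrow \Sym^2(\Ker(q_s)^\vee)$. Thus $dC(s)$ is precisely the map displayed above (after trivializing $\cL_s$), and so $S_k$ is nonsingular of codimension $k(k+1)/2$ at $s$ if and only if $dM(s)$ is surjective, if and only if $p$-regularity holds. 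The step requiring the most care is the Schur complement reduction: one must verify, over the local ring at $s$ rather than just at its residue field, that the ideals of $(m-k+1)$-minors of $q$ and of entries of $M$ coincide. This comes down to the fact that congruence by invertible matrices preserves ideals of minors, together with the observation that each $(m-k+1)$-minor of $\mathrm{diag}(A, M)$ is a product of a minor of $A$ and a nontrivial minor of $M$, while conversely each $M_{ij}$ appears up to the unit $\det A$ as a single such minor.
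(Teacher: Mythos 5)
Your argument is correct and takes essentially the same route as the paper: reduce $p$-regularity at $s$ to surjectivity of the single map $T_{S,s}\otimes\cL_s \to \Sym^2(\Ker(q_s)^\vee)$, and then identify that map with the differential of the equations cutting out $S_k$ at $s$. The paper's two-line proof simply asserts that the kernel of this map is $T_{S_k,s}\otimes\cL_s$, and your Schur-complement computation (local equations for $S_k$ given by the entries of $M$, with $dM(s)=dC(s)$ equal to the displayed map) is precisely the standard verification of that assertion, so your write-up is a fully detailed version of the same proof.
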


\begin{proof}
Since $p\ge k =\dim(\Ker(q_s))$, the family of quadratic forms $q$ is $p$-regular  {at $s$}
if and only if the morphism $T_{S,s} \otimes \cL_s \to \Sym^2(\Ker(q_s)^\vee)$ is surjective.\
Since its kernel is $T_{S_k,s} \otimes \cL_s$,   the lemma follows.
\end{proof}

The following proposition is a very useful criterion for proving the  {nonsingularity} of the double covers 
associated with quadratic degeneracy loci.

\begin{proposition}\label{proposition:covering-smooth}
Assume that $S$ is nonsingular, that~$S_k$ is normal, that~$\codim_{S_k}(S_{k+1}) \ge 2$, that~$S_{k+2} $ is empty, 
and that $q$ is $(k+1)$-regular at all points of $S_{k}$.\

For any double cover $f \colon \tS_k \to S_k$ provided by Theorem~\textup{\ref{theorem:covering-quadratic}}, 
the scheme $ \tS_k$ is nonsingular, the branch locus of $f$ equals~$S_{k+1}$ as  schemes,
 and the preimage of the branch locus is the first order infinitesimal neighborhood of the ramification locus.
\end{proposition}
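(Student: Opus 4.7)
My plan is to reduce the three claims to the archetypal case of Lemma~\ref{lemma:veronese-cover} by constructing, near each point of $S_{k+1}$, a smooth local morphism to the parameter space $\SV = \Sym^2(V^\vee)$ of the universal quadratic form, along which the entire double-cover datum pulls back. The $(k+1)$-regularity hypothesis is tailored precisely to produce this smooth morphism. To begin, on the open subscheme $S_k^0 = S_k \setminus S_{k+1}$ the cover $f$ is \'etale by Theorem~\ref{theorem:covering-quadratic}(b), and $S_k^0$ is nonsingular by Lemma~\ref{lemma:regularity} (with $p = k$, which follows from $(k+1)$-regularity); hence $\tS_k$ is nonsingular above $S_k^0$ and $R(f)$ is empty there. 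Since $S_{k+2} = \varnothing$, the remaining assertions may be checked \'etale locally near an arbitrary point $s \in S_{k+1}$.

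Fix such $s$ and set $V := \Ker(q_s)$, of dimension $k+1$. Choose a complement $W \subset \cE_s$ with $q_s\vert_W$ nondegenerate, trivialize $\cE$ and $\cL$ in an \'etale neighborhood $U$ of $s$, and extend the splitting $\cE_s = V \oplus W$ to a splitting of $\cE\vert_U$. Writing $q\vert_U$ in block form as $\bigl(\begin{smallmatrix} a & b \\ b^T & c \end{smallmatrix}\bigr)$, the invertibility of $c$ near $s$ permits forming the Schur complement
\begin{equation*}
\psi := a - b c^{-1} b^T \colon U \lra \SV.
\end{equation*}
Conjugation by $\bigl(\begin{smallmatrix} I & -bc^{-1} \\ 0 & I \end{smallmatrix}\bigr)$ block-diagonalizes $q\vert_U$ as $\psi \oplus c$, so the ideal of $(m+1-k)$-minors of $q\vert_U$ coincides with that of $2$-minors of $\psi$ up to multiplication by the unit $\det(c)$. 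Therefore $S_k \cap U = \psi^{-1}(\SV_k)$ and $S_{k+1} \cap U = \psi^{-1}(\{0\})$ as subschemes. Since $a(s) = 0$ and $b(s) = 0$, the differential $d\psi_s$ is the composition $T_{S,s} \xrightarrow{\ dq\ } \Sym^2(\cE_s^\vee) \lthra \Sym^2(V^\vee)$, which is surjective by $(k+1)$-regularity, so $\psi$ is smooth at $s$.

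All ingredients of Theorem~\ref{theorem:covering-quadratic}---the cokernel sheaf $\cC$, its restriction $\cC_k$, the reflexive sheaf $\cR_k$ (by Lemma~\ref{lemma:reflexive}), and the self-duality built from $\det(q_k)$---are compatible with flat base change between normal schemes. After further \'etale shrinking of $U$ so that the chosen square root $\cM$ matches the trivial one up to squares of invertible functions, Proposition~\ref{proposition:divisor-cover} then shows that the restriction of $f$ to $S_k \cap U$ is isomorphic to the pullback of the archetypal cover $f_0 \colon V^\vee \to \SV_k$ of Lemma~\ref{lemma:veronese-cover} along the smooth morphism $\psi\vert_{S_k \cap U}$. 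Since $V^\vee$ is nonsingular, so is $\tS_k$ above $U$; since $B(f_0) = \{0\} = \SV_{k+1}$ as reduced schemes by Lemma~\ref{lemma:veronese-cover}, we have $B(f) \cap U = S_{k+1} \cap U$ scheme-theoretically; and since $f_0^{-1}(\{0\})$ is the first-order infinitesimal neighborhood of the reduced ramification point, the same description holds for $f^{-1}(B(f))$ above $U$.

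The main obstacle will be the compatibility invoked at the start of the last paragraph: explicitly identifying the self-duality pair $(\cM \otimes \cR_k \otimes \det(\cE)\vert_{S_k \cap U},\ \det(q_k))$ with the pullback along $\psi\vert_{S_k \cap U}$ of the archetypal pair $(\cR,\ \bm)$ on $\SV_k$, modulo the ambiguity $H^0(\cO^\times)/H^0(\cO^\times)^2$ quantified in Proposition~\ref{proposition:divisor-cover}. Once this identification is secured, the smoothness of $\tS_k$ and the scheme-theoretic descriptions of the branch and ramification loci are transferred verbatim from Lemma~\ref{lemma:veronese-cover} through the smooth pullback.
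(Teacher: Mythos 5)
Your proposal is correct and follows essentially the same route as the paper: reduce to the archetypal Veronese cover of Lemma~\ref{lemma:veronese-cover} via a morphism $\psi$ to $\SV$ that is smooth at $s$ by $(k+1)$-regularity (your Schur-complement construction is just an explicit form of the paper's local $q$-orthogonal splitting $\cE=\cE'\oplus\cE''$), and then transfer nonsingularity, branch locus, and the infinitesimal-neighborhood statement through the pullback. The ``main obstacle'' you flag is handled exactly as you indicate and as the paper does: one only needs that the pulled-back cover $V^\vee\times_{\SV_k}(S_k\cap U)$ satisfies properties (a) and (b) of Theorem~\ref{theorem:covering-quadratic} (using that the determinant cokernel sheaf pulls back along the smooth map $\psi$), after which the last statement of Proposition~\ref{proposition:divisor-cover} gives an \'etale-local isomorphism with $f$, which suffices for all three local conclusions.
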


\begin{proof}
Since the map $f$ is \'etale over $S_k \setminus S_{k+1}$ and $S_k \setminus S_{k+1}$ is nonsingular by Lemma~\ref{lemma:regularity}, 
it is enough to verify that $\tS_k $ is  {nonsingular} over any point $s \in S_{k+1}$.\
Since the question is local on $S$, we may assume that the line bundle $\cL$ is trivial and that there is a $q$-orthogonal direct sum decomposition
\begin{equation*}
\cE = \cE' \oplus \cE'',
\end{equation*}
where $q\vert_{\cE'}$ is everywhere nondegenerate, $q'':=q\vert_{\cE''}$ vanishes at $s$, and $\cE''=V\otimes\cO_S$ is free of rank~$k+1$.\
The summand $q''$ defines a morphism
\begin{equation}
\label{eq:map-to-sv}
\psi \colon S \lra \Sym^2(V^\vee) = \SV 
\end{equation}
and one can view $q''$ as the pullback of the universal family of  {quadratic forms} on $\SV$.\ Note that the differential of $\psi$ is surjective at $s$ by $(k+1)$-regularity of $q$,
hence $\psi$ is smooth {in a neighborhood of~$s$}.\ Furthermore, 
 we have
 \begin{equation*}
S_k = \psi^{-1}(\SV_k) 
\qquad \textnormal{and}\qquad
S_{k+1} = \psi^{-1}(\{0\}),
\end{equation*}
 and the determinant cokernel sheaf $\cR_k$ (defined in \eqref{eq:rk-ck}) on $S_k$
 is the pullback of the determinant cokernel sheaf on $\SV_k$.\ 
 {By Lemma~\ref{lemma:veronese-cover},} the scheme
\begin{equation*}
\hS_k := V^\vee \times_{\SV_k} S_k,
\end{equation*}
where the fiber product is taken with respect to the maps~\eqref{eq:veronese-map} and~\eqref{eq:map-to-sv},
provides a double cover 
\begin{equation*}
\hat{f} \colon \hS_k \lra S_k
\end{equation*}
that satisfies {properties (a) and (b) in Theorem~\ref{theorem:covering-quadratic}}.\ 
Since $\psi$ is smooth {at $s$} and $V^\vee$ is nonsingular, it follows that $\hS_k$ is nonsingular at $\hat{f}^{-1}(s)$.\ 
Since all double covers~$f \colon \tS_k \to S_k$ provided by Theorem~\ref{theorem:covering-quadratic}
are \'etale locally isomorphic to each other, $\tS_k$  is also nonsingular at $f^{-1}(s)$.

Similarly, the branch locus of $f \colon \tS_k \to S_k$ is equal to the branch locus of $\hat{f}$,
which in   turn is equal to the pullback by $\psi$ of the branch locus of ${f_0 \colon} V^\vee \to \SV_k$.\ 
The latter is the point $\{0\}$ in $\SV$ (Lemma~\ref{lemma:veronese-cover}), 
hence the branch locus of $f$ is the zero-locus of $\psi$ which is, as we noted before, equal to~$S_{k+1}$.\ 
The preimage of the branch locus of ${\hat{f}}$ in $\hS_k$ equals the first order infinitesimal neighborhood 
of the ramification locus, hence the same is true for every double cover 
  {provided by Theorem~\ref{theorem:covering-quadratic}}.
\end{proof}

\subsection{Families of isotropic spaces}

Another way of constructing double covers of   quadratic degeneracy loci is the following.\ 
Let as before   $q \colon \cL \to \Sym^2(\cE^\vee)$ be a family of quadratic forms 
in a rank-$m$ vector bundle $\cE$ on a scheme $S$ and let $\cQ \subset \P_S(\cE)$ be the corresponding family of quadrics.\ 
 {Consider} the relative Hilbert scheme 
\begin{equation}\label{rhs}
\varphi\colon F_{d-1}(\cQ/S) := \Hilb^{\P^{d-1}}(\cQ/S)\lra S
\end{equation}
parameterizing projective linear spaces of dimension $d-1$ in the fibers of $\cQ$ over $S$.\
 {We make a couple of observations.}

\begin{lemma}
\label{lemma:f-cm}
Assume that $S$ is Cohen--Macaulay, that
\begin{equation}
\label{eq:dim-f}
\dim(F_{d-1}(\cQ/S)) = \dim(S) + d(m-d) - d(d+1)/2, 
\end{equation} 
and that $F_{d-1}(\cQ/S)$ is nonsingular in codimension~$1$.\ Then $F_{d-1}(\cQ/S)$ is normal.
\end{lemma}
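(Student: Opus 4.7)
The plan is to apply Serre's criterion: since condition $R_1$ is given by hypothesis, it suffices to show that $F_{d-1}(\cQ/S)$ satisfies $S_2$, and for this I will prove the stronger statement that it is Cohen--Macaulay.

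First I would realize $F_{d-1}(\cQ/S)$ as a global zero locus on the relative Grassmannian. Let $\pi \colon G := \Gr_S(d,\cE) \to S$ be the relative Grassmannian of rank-$d$ subbundles of $\cE$, with tautological subbundle $\cU \subset \pi^*\cE$.\ The pullback $\pi^*q \colon \pi^*\cL \to \Sym^2(\pi^*\cE^\vee)$, composed with the restriction $\Sym^2(\pi^*\cE^\vee) \twoheadrightarrow \Sym^2(\cU^\vee)$, gives a canonical section
\begin{equation*}
s_q \in H^0\bigl(G,\, \cF\bigr), \qquad \cF := \Sym^2(\cU^\vee) \otimes \pi^*\cL^\vee,
\end{equation*}
whose scheme-theoretic zero locus is $F_{d-1}(\cQ/S)$, since a rank-$d$ subbundle of $\pi^*\cE$ corresponds to a $(d-1)$-plane contained in $\cQ$ exactly when $q$ vanishes on it.\ The rank of $\cF$ equals $d(d+1)/2$.

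Next I would observe that $G \to S$ is smooth of relative dimension $d(m-d)$, so $G$ is Cohen--Macaulay of dimension $\dim(S) + d(m-d)$.\ The hypothesis~\eqref{eq:dim-f} says that $F_{d-1}(\cQ/S)$ has codimension exactly $d(d+1)/2 = \rank(\cF)$ in $G$, that is, equal to the expected codimension.\ By the standard Cohen--Macaulay criterion for zero loci of sections of vector bundles, this forces $s_q$ to be locally a regular section, the Koszul complex of $s_q$ on $\cF^\vee$ is then exact and resolves $\cO_{F_{d-1}(\cQ/S)}$, and hence $F_{d-1}(\cQ/S)$ is Cohen--Macaulay.\ In particular it satisfies Serre's condition $S_2$.

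Combining this with the assumption that $F_{d-1}(\cQ/S)$ is nonsingular in codimension~$1$ ($R_1$), Serre's criterion gives normality.\ The only real step to justify is that equal-to-expected codimension of the zero locus of a section on a Cohen--Macaulay ambient forces regularity of the section; this is entirely standard (it amounts to the fact that $d(d+1)/2$ local equations cutting out a subscheme of the correct codimension in a Cohen--Macaulay scheme form a regular sequence), and is the only nontrivial input beyond the explicit incidence description of the Hilbert scheme.
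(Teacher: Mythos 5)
Your proposal is correct and follows essentially the same route as the paper: both realize $F_{d-1}(\cQ/S)$ as the zero locus of the canonical section of the rank-$d(d+1)/2$ bundle $\cL^\vee\otimes\Sym^2(\cU^\vee)$ on the Cohen--Macaulay scheme $\Gr_S(d,\cE)$, use the dimension hypothesis~\eqref{eq:dim-f} to conclude that this zero locus is Cohen--Macaulay, and then apply Serre's criterion together with nonsingularity in codimension~$1$. The only difference is that you spell out the standard regular-section/Koszul argument that the paper leaves implicit.
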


\begin{proof}
Let $\cU$ be the tautological bundle on the Grassmannian $\Gr_S(d,\cE)$.\ The scheme $F_{d-1}(\cQ/S)$ is the zero-locus of a global section of the vector bundle $\cL^\vee \otimes \Sym^2(\cU^\vee)$ 
of rank~$d(d+1)/2$ on the Cohen--Macaulay  {scheme} $\Gr_S(d,\cE)$ of dimension $\dim(S) + d(m-d)$.\
If condition \eqref{eq:dim-f} is satisfied, $F_{d-1}(\cQ/S)$ is itself Cohen--Macaulay.\ 
By Serre's normality criterion, since $F_{d-1}(\cQ/S)$ is nonsingular in codimension~1, it is normal.
\end{proof}

\begin{lemma}
\label{lemma:normality-f}
Assume that $S$ is nonsingular and that $\cQ/S$ is $p$-regular.\ 
Over $S \setminus S_{p+1}$, the scheme $F_{d-1}(\cQ/S)$ is nonsingular and the dimension condition~\eqref{eq:dim-f} holds.\
In particular, if $\cQ/S$ is $p$-regular and 
\begin{equation}
\label{eq:dim-f-2}
\dim(F_{d-1}(\cQ/S) \times_S S_{p+1}) \le \dim(S) + d(m-d) - d(d+1)/2 - 2,
\end{equation}
the scheme $F_{d-1}(\cQ/S)$ is normal.
\end{lemma}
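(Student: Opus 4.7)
The strategy is to show that the defining section of $F_{d-1}(\cQ/S)$ inside the Grassmannian bundle is transverse to zero over $S \setminus S_{p+1}$; the final normality claim will then combine this with~\eqref{eq:dim-f-2} and Lemma~\ref{lemma:f-cm}. Recall from the proof of Lemma~\ref{lemma:f-cm} that $F_{d-1}(\cQ/S) \subset \Gr_S(d,\cE)$ is cut out by a section $\sigma$ of $\cL^\vee \otimes \Sym^2(\cU^\vee)$ which at $([U],s)$ evaluates to $q_s\vert_U$. The goal is to show $d\sigma$ is surjective onto the fiber $\cL_s^\vee \otimes \Sym^2(U^\vee)$ at every such zero $([U],s)$ with $s \in S \setminus S_{p+1}$.

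Since $S$ is nonsingular at $s$, one has the exact sequence
\[
0 \to \Hom(U, \cE_s/U) \to T_{[U]}\Gr_S(d,\cE) \to T_{S,s} \to 0,
\]
and $d\sigma$ breaks into a horizontal part $d_h\sigma$ and a vertical part $d_v\sigma$. Writing $b_s$ for the symmetric bilinear form associated with $q_s$ and setting $K := U \cap \Ker(q_s)$ (so $\dim K \le p$ since $s \notin S_{p+1}$), a direct computation gives
\[
d_h\sigma(\phi)(u_1, u_2) = b_s(u_1, \tilde\phi(u_2)) + b_s(\tilde\phi(u_1), u_2),
\]
for any lift $\tilde\phi \colon U \to \cE_s$ of $\phi$; the isotropy of $U$ makes this independent of the choice of lift. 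Since $K \subset \Ker(q_s)$, these forms vanish on $K \times K$, so $\Ima(d_h\sigma) \subset \Ker(\rho)$ where $\rho \colon \Sym^2(U^\vee) \twoheadrightarrow \Sym^2(K^\vee)$ denotes restriction. Using that $b_s$ induces a surjection $\cE_s/U \twoheadrightarrow (U/K)^\vee$, and that the symmetrization $A \mapsto A + A^T$ on $\Hom(U, (U/K)^\vee)$ has kernel $\bw2(U/K)^\vee$, a dimension count forces the equality $\Ima(d_h\sigma) = \Ker(\rho)$. The $p$-regularity hypothesis, applied to $K$, then yields surjectivity of $\rho \circ d_v\sigma$: any target element $\xi$ is realized by first picking $t \in T_{S,s}$ with $\rho(d_v\sigma(t)) = \rho(\xi)$ and then absorbing the residual $\xi - d_v\sigma(t) \in \Ker(\rho) = \Ima(d_h\sigma)$ horizontally. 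Hence $d\sigma$ is surjective, so $F_{d-1}(\cQ/S)$ is smooth of the expected codimension $d(d+1)/2$ at $([U],s)$, yielding both the nonsingularity and~\eqref{eq:dim-f} over $S \setminus S_{p+1}$.

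For the final assertion, \eqref{eq:dim-f-2} forces the part of $F_{d-1}(\cQ/S)$ lying over $S_{p+1}$ to have codimension at least $2$ in $F_{d-1}(\cQ/S)$, so~\eqref{eq:dim-f} holds globally and $F_{d-1}(\cQ/S)$ is nonsingular in codimension $1$; Lemma~\ref{lemma:f-cm} then delivers normality. The one genuinely nontrivial step is the identification $\Ima(d_h\sigma) = \Ker(\rho)$, a linear-algebra exercise turning on the isotropy of $U$ and on the fact that $K$ is killed by $b_s$; once that is in place, $p$-regularity plugs in cleanly and the remainder is dimensional bookkeeping against~\eqref{eq:dim-f-2}.
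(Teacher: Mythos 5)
Your proof is correct and follows essentially the same route as the paper: reduce nonsingularity to surjectivity of the differential of the defining section, observe that the image of the differential along the Grassmannian-fiber directions $\Hom(U,\cE_s/U)$ is exactly the kernel of the restriction $\Sym^2(U^\vee)\to\Sym^2(K^\vee)$, invoke $p$-regularity (valid since $\dim K\le p$ off $S_{p+1}$) for surjectivity onto $\Sym^2(K^\vee)$, and then combine~\eqref{eq:dim-f-2} with Lemma~\ref{lemma:f-cm} for normality. The only differences are cosmetic: you work out explicitly (via the surjection $\cE_s/U\twoheadrightarrow (U/K)^\vee$ and the symmetrization dimension count) the cokernel identification that the paper takes from the analogous argument in~\cite{K14}, and your labels ``horizontal/vertical'' are swapped relative to the usual convention for the fibration $\Gr_S(d,\cE)\to S$.
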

\begin{proof}
The proof is analogous to the proof of~\cite[Proposition~2.1]{K14}.\  
Let $s \in S$ be a point such that the corank of $q_s$ is equal to $k$, with $k \le p$.\
Let $U \subset \cE_s$ be a $q_s$-isotropic subspace of dimension $d$.\ 
To show that $F_{d-1}(\cQ/S) \subset \Gr_S(d,\cE)$ is nonsingular at $[U]$, it is enough to check that the natural map 
\begin{equation*}
dq \colon T_{S,s} \oplus \Hom(U,\cE_s/U) \lra \Sym^2(U^\vee)
\end{equation*}
is surjective.\
Let $K := U \cap \Ker(q_s)$.\ 
Since the cokernel of the restriction of the map $dq$ to the second summand is isomorphic to $\Sym^2(K^\vee)$, 
it remains to check the surjectivity of the map~\mbox{$T_{S,s} \to \Sym^2(K^\vee)$} obtained by   restricting $dq$ to the first summand.\ 
But this is precisely the map from the definition of $p$-regularity (see Definition~\ref{definition:p-regular}).\ 
Since $\dim(K) \le \dim(\Ker(q_s)) = k \le p$, it is surjective.

The above argument proves that over $S \setminus S_{p+1}$, the scheme $F_{d-1}(\cQ/S)$ is nonsingular and the equality~\eqref{eq:dim-f} holds.\
If the inequality~\eqref{eq:dim-f-2} also holds, the codimension of $F_{d-1}(\cQ/S) \times_S S_{p+1}$ in $F_{d-1}(\cQ/S)$ is at least~2,
hence the equality~\eqref{eq:dim-f} holds over $S$ and  $F_{d-1}(\cQ/S)$ is nonsingular in codimension~1.\ 
It is therefore normal by Lemma~\ref{lemma:f-cm}.
\end{proof}

 {Fix an integer $k\in\{0,\dots,m\}$ such that $m - k$ is even and set 
\begin{equation}\label{eq:d}
d := (m + k)/2.
\end{equation}
In the next proposition, we relate the double covering of the degeneracy locus $S_k$ 
constructed in Theorem~\ref{theorem:covering-quadratic} and Remark~\ref{remark:unique-cover-quadratic} to the Hilbert scheme~$F_{d-1}(\cQ/S)$.}

\begin{proposition}\label{proposition:isotropic-stein}
Assume that  $S_k$ is normal  and that $\codim_{S_k}(S_{k+1}) \ge 2$.
If $m - k$ is positive and even  and the integer $d$ is defined by~\eqref{eq:d}, 
the map $\varphi \colon F_{d-1}(\cQ/S) \to S$ factors as
\begin{equation*}
F_{d-1}(\cQ/S) \lra S'_k \lra S_k \lhra S,
\end{equation*}
where the first morphism has connected fibers and the second morphism is   finite.\
If~$f \colon \tS_k \to S_k$ is the canonical double cover provided 
by Theorem~\textup{\ref{theorem:covering-quadratic}} and Remark~\textup{\ref{remark:unique-cover-quadratic}}, we have
\begin{equation}
\label{eq:spk-tsk}
S'_k \times_{S_k} S_k^0 \cong \tS_k \times_{S_k} S_k^0,
\end{equation}
and $\tS_k$ is the normalization of $S'_k$.\
In particular, if $F_{d-1}(\cQ/S)$ is normal, we have $S'_k \cong \tS_k$.
\end{proposition}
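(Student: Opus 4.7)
The overall strategy is to obtain the factorization via Stein factorization of the proper morphism $\varphi$, analyze the fibers over $S_k^0$ to recognize $S'_k|_{S_k^0} \to S_k^0$ as an étale double cover, identify it with $\tS_k|_{S_k^0}$ using a canonical trivialization coming from the tautological subbundle, and then pass to normalizations to conclude the global statement.

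First, since $F_{d-1}(\cQ/S)$ is a closed subscheme of $\Gr_S(d,\cE)$, the morphism $\varphi$ is proper. The existence of a $d$-dimensional $q_s$-isotropic subspace of $\cE_s$ forces the corank of $q_s$ to be at least $2d - m = k$, so $\varphi$ factors set-theoretically through $S_k$; the Stein factorization of $\varphi$ then yields $F_{d-1}(\cQ/S) \to S'_k \to S_k$ with the stated properties. At a point $s \in S_k^0$, where the corank of $q_s$ is exactly $k$ with kernel $K_s$ of dimension $k$, every $d$-dimensional $q_s$-isotropic $U \subset \cE_s$ necessarily contains $K_s$, and $U/K_s$ is a maximal isotropic subspace for the nondegenerate quadratic form of even dimension $m-k$ induced on $\cE_s/K_s$. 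The variety of such subspaces is a disjoint union of two smooth connected components, so each fiber of $\varphi$ over $S_k^0$ has exactly two connected components, giving a finite étale double cover $S'_k|_{S_k^0} \to S_k^0$.

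The core of the proof is to identify this étale double cover with $\tS_k|_{S_k^0}$. On $F := F_{d-1}(\cQ/S) \times_S S_k^0$, let $\cU \subset \varphi^*\cE|_F$ be the tautological rank-$d$ isotropic subbundle. The pullback of $\cC_k^\vee|_{S_k^0}$ is contained in $\cU$ (as the kernel of the restricted $q$), and the quotient $\cU/\varphi^*\cC_k^\vee|_F$ is a rank-$(m-k)/2$ isotropic subbundle of $\varphi^*\cE_k|_F$ for the nondegenerate form $q_k$ of~\eqref{eq:qk}. The isomorphism~\eqref{eq:qk} induces the short exact sequence
\begin{equation*}
0 \to \cU/\varphi^*\cC_k^\vee|_F \to \varphi^*\cE_k|_F \to \varphi^*\cL^{-1}|_F \otimes (\cU/\varphi^*\cC_k^\vee|_F)^\vee \to 0,
\end{equation*}
whose determinant yields a canonical isomorphism $\varphi^*\det(\cE_k)|_F \cong \varphi^*\cL^{-(m-k)/2}|_F$, and hence a trivialization of the pulled-back square-root line bundle $\cR := (\cM \otimes \cR_k \otimes \det(\cE))|_{S_k^0}$ whose square agrees with the self-duality isomorphism constructed in the proof of Theorem~\ref{theorem:covering-quadratic}. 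This trivialization defines a morphism $F \to \tS_k|_{S_k^0}$ over $S_k^0$; since $\tS_k|_{S_k^0} \to S_k^0$ is étale, it factors through the Stein factorization, yielding a morphism $S'_k|_{S_k^0} \to \tS_k|_{S_k^0}$ of degree-$2$ étale covers. A direct determinant computation shows that the trivialization takes opposite values on the two connected components of each geometric fiber of $\varphi|_F$ (they differ by an element of the orthogonal group of determinant $-1$), so this map is nonconstant on fibers and is therefore an isomorphism, proving~\eqref{eq:spk-tsk}.

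Finally, since $\tS_k$ is normal, $S'_k \to S_k$ is finite, and $\codim_{S_k}(S_{k+1}) \geq 2$, the rational map $\tS_k \dashrightarrow S'_k$ defined on the dense open $S_k^0$ extends to a finite birational morphism by normality, exhibiting $\tS_k$ as the normalization of $S'_k$. If furthermore $F_{d-1}(\cQ/S)$ is normal, then $\varphi_*\cO_{F_{d-1}(\cQ/S)}$ is integrally closed in its total ring of fractions, so $S'_k$ is already normal and $S'_k \cong \tS_k$. The main obstacle in this plan is the identification step: verifying that the determinant trivialization built from the tautological data truly takes opposite values on the two connected components of each fiber of the orthogonal Grassmannian, thereby matching the sheet structure of $\tS_k|_{S_k^0}$; the remaining steps reduce to standard Stein factorization and normalization arguments once this identification is in place.
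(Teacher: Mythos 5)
Your proposal is correct and follows essentially the same route as the paper: factor $\varphi$ through $S_k$ by the corank bound, take the Stein factorization, use that over $S_k^0$ every $d$-dimensional isotropic subspace contains the kernel so the fiber becomes the maximal-isotropic Grassmannian of the nondegenerate form $q_k$, identify the resulting degree-2 part with the double cover defined by $\det(q_k)$, and conclude by normality. The only cosmetic difference is that where the paper simply invokes the standard description of the Stein factorization of the maximal-isotropic bundle of an even-rank nondegenerate quadric fibration as the discriminant double cover given by $\det(q_k)$, you re-derive it via the tautological-Lagrangian trivialization of the square-root line bundle, and your asserted ``direct determinant computation'' (opposite signs on the two components of the orthogonal Grassmannian) is precisely that standard fact.
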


\begin{proof}
The fiber of $F_{d-1}(\cQ/S)$ over a  
point $s \in S$ parameterizes $d$-dimensional $q_s$-isotropic vector subspaces 
in the $m$-dimensional vector space $\cE_s$.\
If the fiber is nonempty, the rank of $q_s$ does not exceed $2(m-d) = m - k$, hence its corank is at least $k$.
This proves that $\varphi$ factors through $S_k$.\
We define $S'_k$ by the Stein factorization $F_{d-1}(\cQ/S) \to S'_k \to S_k$.\
Let us show that it has all required properties.

We use the notation of the proof of Theorem~\ref{theorem:covering-quadratic}: in particular, the vector bundles $\cE_k$ and $\cL_k$ are defined by~\eqref{eq:cek} and the morphism $q_k$ is defined by~\eqref{eq:qk}.\
We denote by 
\begin{equation*}
\cQ_k \subset \P_{S_k^0}(\cE_k)
\end{equation*}
the family of nondegenerate quadrics given by the quadratic form $q_k$.\ 
Let us prove that there is an isomorphism
\begin{equation}
\label{eq:f-isomorphism}
F_{d-1}(\cQ/S) \times_S S_k^0 \cong F_{d-k-1}(\cQ_k/S_k^0)
\end{equation}
of schemes over $S_k^0$.

We first construct a map from the left side to the right side of~\eqref{eq:f-isomorphism}.\
Denote by $\varphi_0$ the natural projection $F_{d-1}(\cQ/S) \times_S S_k^0 \to S_k^0$.\
Let $\cU \subset \varphi_0^*(\cE)$ be the rank-$d$ tautological subbundle 
on the Hilbert scheme $F_{d-1}(\cQ/S) \times_S S_k^0 \subset \Gr_{S_k^0}(d,\cE)$.\
Since a $d$-dimensional isotropic subspace for a quadratic form of rank $m-k = 2(m-d)$ 
on a vector space of dimension $m$ contains the kernel of the form, 
there is an inclusion $\varphi_0^*(\cC_k^\vee\vert_{S_k^0}) \hra \cU$ and the quotient bundle
\begin{equation*}
 \cU_k := \cU/\varphi_0^*(\cC_k^\vee\vert_{S_k^0})
\end{equation*}
is a subbundle in $\varphi_0^*(\cE_k)$ of rank $d - k$ which is isotropic for $q_k$.\ 
Therefore, $\cU_k$ induces a map 
\begin{equation*}
F_{d-1}(\cQ/S) \times_S S_k^0 \lra F_{d-k-1}(\cQ_k/S_k^0).
\end{equation*}

{Conversely, let $\varphi_k$ be the natural projection $F_{d-k-1}(\cQ_k/S_k^0) \to S_k^0$ 
and let $\cU_k \subset \varphi_k^*(\cE_k)$ be the rank-$(d-k)$ tautological subbundle 
on the Hilbert scheme $F_{d-k-1}(\cQ_k/S_k^0) \subset \Gr_{S_k^0}(d-k,\cE_k)$.\
Denote by~$\cU \subset \varphi_k^*(\cE)$ the preimage of $\cU_k$ 
with respect to the natural projection $\varphi_k^*(\cE) \to \varphi_k^*(\cE_k)$.\
By construction, $\cU$ is a rank-$d$ subbundle in $\varphi_k^*(\cE)$ which is isotropic for $q$.\
Therefore, it induces a map 
\begin{equation*}
F_{d-k-1}(\cQ_k/S_k^0) \lra F_{d-1}(\cQ/S) \times_S S_k^0.
\end{equation*}
The two constructed maps are clearly mutually inverse, and this proves~\eqref{eq:f-isomorphism}.

Since the Stein factorization is compatible with base changes, it follows from~\eqref{eq:f-isomorphism} that $S'_k \times_S S_k^0$
provides the Stein factorization for the map $\varphi_k \colon {F_{d-k-1}}(\cQ_k/S_k^0) \to S_k^0$.\
But $\cQ_k \to S_k^0$ is a family of nondegenerate quadrics and the Hilbert scheme $F_{d-k-1}(\cQ_k/S_k^0)$ parameterizes
its maximal isotropic subspaces, hence the Stein factorization is provided by the double cover 
\begin{equation*}
\Spec_{S_k^0}(\cO_{S_k^0} \oplus \cL_k^{d-k} \otimes \det(\cE_k)) \lra S_k^0,
\end{equation*}
where the algebra structure
on $\cO_{S_k^0} \oplus \cL_k^{d-k} \otimes \det(\cE_k)$ is given by $\det(q_k)$.\
Using~\eqref{eq:det-cek}, we see that this double cover coincides with $\tS_k \times_S S_k^0 \to S_k^0$.\
This proves~\eqref{eq:spk-tsk}.

In particular, the double covers $S'_k \to S_k$ and $\tS_k \to S_k$ have  same {rings} of rational functions.\ Since the scheme $\tS_k$ is normal, it is isomorphic to the normalization of $S'_k$.\
Finally, if $F_{d-1}(\cQ/S)$ is normal, the scheme $S'_k$ is also normal, hence $S'_k \cong \tS_k$.}
\end{proof}

\subsection{Application to double covers of symmetroids}
\label{subsection:symmetroids}

The discriminant hypersurface for a linear system of quadratic forms is classically called a {\sf  symmetroid}.\ To be more precise, let $V$ be a vector space of dimension $m$,   let $W$ be a vector space, and let
\begin{equation}
\label{eq:family-of-quadrics}
W \lra \Sym^2V^\vee 
\end{equation} 
be a linear map 
which we think of as a family of quadratic forms in $\P(V)$ parameterized by $\P(W)$.\ We denote by $\cQ \subset \P(W) \times \P(V)$ the corresponding family of quadrics over~$\P(W)$.\ The corresponding symmetroid hypersurface in $\P(W)$ is the discriminant for the map~\mbox{$\cQ \to \P(W)$}.

This fits in our general framework: the map~\eqref{eq:family-of-quadrics} is a family of quadratic forms 
on the projective space $S = \P(W)$ 
in the trivial vector bundle $\cE = V \otimes \cO_{\P(W)}$,
 the  line bundle is  $\cL = \cO_{\P(W)}(-1)$, and the symmetroid is the corank-1 locus $S_1 \subset S = \bP(W)$.\
As an application of our results, we will construct a canonical double cover of this symmetroid.\ These double covers appeared for instance in~\cite[Section~2.3]{HT}.

 {As in the first paragraph of Section~\ref{sec3},} we set  $\cC := \Coker\bigl(V \otimes \cO_{\P(W)}(-1) \to V^\vee \otimes \cO_{\P(W)}\bigr)$ 
and we denote by $S_k\subset \P(W)$   the corank-$k$ locus.

\begin{theorem}
\label{thoerem:symmetroids}
Assume that we are given a linear map as in \eqref{eq:family-of-quadrics}, with $  \dim(V)=2d - 1$  odd.\ Assume moreover that
 $S_1$ is a hypersurface in $\P(W)$, that     $S_1 \setminus S_2$ is nonsingular, and that   $\codim_{S_1}(S_2) \ge 2$.
\begin{itemize}
\item[$1)$] 
There is a double cover 
\begin{equation*}
f \colon \tS_1 \lra S_1,
\end{equation*}
with $\tS_1$ normal, $f$ \'etale over $S_1 \setminus S_2$, and  
\begin{equation*}
f_*\cO_{\tS_1} \cong \cO_{S_1} \oplus \cC(1-d).
\end{equation*}
If the base field is quadratically closed and   $\dim (W) \ge 3$, this double cover  is unique up to isomorphism.
\item[$2)$] 
If moreover $S_2 \setminus S_3$ is nonsingular, $\tS_1$ is nonsingular over $S_1 \setminus S_3$.
\item[$3)$]
If, additionally, $\dim ( {F_{d-1}(\cQ/\P(W)) \times_{\P(W)} S_3}) \le \dim (W) + \frac12 d(d-3) - 3$, 
the cover $f$  provides the Stein factorization for the  map $F_{d-1}(\cQ/\P(W)) \to \P(W)$ defined in~\eqref{rhs}.
\end{itemize}
\end{theorem}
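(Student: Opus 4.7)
The three parts follow by unwinding the general machinery of Section~\ref{sec3} in the present setting. We take $\cE = V \otimes \cO_{\P(W)}$ of rank $m = 2d-1$, $\cL = \cO_{\P(W)}(-1)$, and focus throughout on the index $k = 1$.

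For part~(1), the scheme $S_1$ is normal by Serre's criterion: as a hypersurface in the nonsingular scheme $\P(W)$ it is Cohen--Macaulay, and it is nonsingular in codimension one by the hypotheses that $S_1 \setminus S_2$ is nonsingular and $\codim_{S_1}(S_2) \ge 2$. Since $m - k = 2d - 2$ is even, Remark~\ref{remark:unique-cover-quadratic} furnishes the canonical square root $\cM := \cO_{S_1}(1-d)$. Applying Theorem~\ref{theorem:covering-quadratic} and using that $\det(\cE)$ is a trivial line bundle while $\cR_1$ is the reflexive hull of $\cC|_{S_1}$, the pushforward formula of the theorem specializes to the stated isomorphism $f_*\cO_{\tS_1} \cong \cO_{S_1} \oplus \cC(1-d)$. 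For uniqueness, Proposition~\ref{proposition:divisor-cover} identifies the isomorphism classes of such double covers with a torsor over $H^0(S_1,\cO_{S_1}^\times)/H^0(S_1,\cO_{S_1}^\times)^2$; when $\dim(W) \ge 3$, the hypersurface $S_1$ is an ample divisor in a projective space of dimension at least~$2$, hence projective and connected, so $H^0(S_1,\cO_{S_1}^\times) = \kk^\times$, and this torsor is trivial over a quadratically closed field.

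For part~(2), we restrict the whole setup to the open subscheme $\P(W) \setminus S_3$, so that the vanishing hypothesis $S_{k+2} = S_3 = \varnothing$ of Proposition~\ref{proposition:covering-smooth} is met. The only remaining nontrivial point is the $(k+1) = 2$-regularity of $q$ at each point of $S_1 \setminus S_3$. At a point $s \in S_1 \setminus S_2$, where $\dim\Ker(q_s) = 1$, $2$-regularity reduces trivially to $1$-regularity, which by Lemma~\ref{lemma:regularity} is equivalent to the assumed smoothness of $S_1$ at $s$ of the expected codimension~$1$. At a point $s \in S_2 \setminus S_3$, where $\dim\Ker(q_s) = 2$, Lemma~\ref{lemma:regularity} (with $k = p = 2$) identifies $2$-regularity with the smoothness of $S_2$ at $s$ of the expected codimension~$3$; the smoothness is the hypothesis of part~(2), while the codimension is forced by the general determinantal lower bound $\codim_{\P(W)}(S_2) \ge 3$ for symmetric degeneracy loci. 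Proposition~\ref{proposition:covering-smooth} then applies and yields the nonsingularity of $\tS_1$ over $S_1 \setminus S_3$.

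For part~(3), we invoke Proposition~\ref{proposition:isotropic-stein} with $k = 1$: since $m - k = 2d - 2$ is positive and even and the integer defined by~\eqref{eq:d} coincides with the $d$ of the statement, the proposition identifies $\tS_1$ with the normalization of the Stein factor $S'_1$ of the map $\varphi \colon F_{d-1}(\cQ/\P(W)) \to \P(W)$. To upgrade this to $S'_1 \cong \tS_1$ it suffices to prove that $F_{d-1}(\cQ/\P(W))$ is itself normal, which follows from Lemma~\ref{lemma:normality-f} applied with $p = 2$: the $2$-regularity hypothesis on $\P(W) \setminus S_3$ is already established in part~(2), and a direct substitution of $m = 2d-1$ and $\dim\P(W) = \dim(W) - 1$ into the dimension bound~\eqref{eq:dim-f-2} of the lemma produces precisely the bound of part~(3). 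The main technical obstacle across all three parts is the careful bookkeeping of the line-bundle twists, square roots, and reflexive hulls introduced by the ambient theorems, and checking that they collapse to the clean formulas in the statement.
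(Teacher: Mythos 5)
Your overall strategy is the paper's own (Theorem~\ref{theorem:covering-quadratic} for part~1, Lemma~\ref{lemma:regularity} plus Proposition~\ref{proposition:covering-smooth} for part~2, Lemma~\ref{lemma:normality-f} plus Proposition~\ref{proposition:isotropic-stein} for part~3), but part~1 has a genuine gap. Theorem~\ref{theorem:covering-quadratic} gives $f_*\cO_{\tS_1} \cong \cO_{S_1} \oplus (\cM \otimes \cR_1 \otimes \det(\cE)\vert_{S_1})$ with $\cR_1 = (\cC\vert_{S_1})^{\vee\vee}$ by~\eqref{eq:rk-ck}, and you pass from this to the stated formula with $\cC(1-d)$ while explicitly acknowledging that $\cR_1$ is only the reflexive \emph{hull} of $\cC\vert_{S_1}$. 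What is missing is precisely the identification $\cR_1 \cong \cC$, i.e.\ the reflexivity of $\cC$ as a sheaf on $S_1$; this is not automatic (a priori $\cC$ could fail condition $\mathbf{S}_2$ along $S_2$), and it is the one genuinely new verification the paper makes in this proof. The paper restricts the sequence $0 \to V \otimes \cO_{\P(W)}(-1) \to V^\vee \otimes \cO_{\P(W)} \to \cC \to 0$ to $S_1$ and, using the symmetry of $q$, obtains the four-term exact sequence $0 \to \cC(-m) \to V \otimes \cO_{S_1}(-1) \to V^\vee \otimes \cO_{S_1} \to \cC \to 0$, so that $\cC(-m)$ is the kernel of a morphism of locally free sheaves on the normal scheme $S_1$, hence reflexive, hence so is $\cC$. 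Without this step your argument only yields $f_*\cO_{\tS_1} \cong \cO_{S_1} \oplus \cC^{\vee\vee}(1-d)$, which is weaker than the statement.

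A secondary flaw is in part~2: at a point $s \in S_2 \setminus S_3$ you need $S_2$ to be nonsingular of codimension exactly $3$ in $\P(W)$ to apply Lemma~\ref{lemma:regularity} with $k = p = 2$, and you justify the codimension by a ``general determinantal lower bound $\codim_{\P(W)}(S_2) \ge 3$''. No such general lower bound exists (a constant family of corank-$2$ forms has $S_2 = \P(W)$); the general determinantal statement only bounds the codimension from \emph{above} by $3$. The inequality $\codim_{\P(W)}(S_2) \ge 3$ that you need is simply the theorem's hypothesis $\codim_{S_1}(S_2) \ge 2$ combined with $S_1$ being a hypersurface, and together with the assumed nonsingularity of $S_2 \setminus S_3$ it forces codimension exactly $3$ and hence $2$-regularity; so this is a one-line fix, but as written the justification is wrong. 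Parts~3 and the remainder of part~2 (restriction to $\P(W) \setminus S_3$, the dimension count matching~\eqref{eq:dim-f-2}, and the uniqueness argument via constancy of regular functions on the integral projective scheme $S_1$) agree with the paper's proof.
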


 If $S_3 = \varnothing$, one concludes  in  {statement} 2) that $\tS_1$ is everywhere nonsingular, and  the condition  in  {statement} 3) becomes void.

\begin{proof}
The scheme $S_1$ is normal, because it is a hypersurface in $\P(W)$  which is nonsingular in codimension~1.\
The double cover is then given by Theorem~\ref{theorem:covering-quadratic} (taking $k=1$ and $\cM= \cO_{S_1}(1-d)$): 
we only have to check that $\cR_1 \cong \cC$ or, in view of the definition~\eqref{eq:rk-ck} of $\cR_1$, 
that $\cC$ is a reflexive sheaf on~$S_1$.\
Restricting to $S_1$  the  exact sequence
\begin{equation*}
0 \to V \otimes \cO_{\P(W)}(-1) \to V^\vee \otimes \cO_{\P(W)} \to \cC \to 0
\end{equation*}
of  sheaves on $\P(W)$  gives an exact sequence
\begin{equation*}
0 \to \cC(-m) \to V \otimes \cO_{S_1}(-1) \to V^\vee \otimes \cO_{S_1} \to \cC \to 0.
\end{equation*}
of  sheaves on $S_1$.\ The sheaf   $\cC(-m)$ is therefore  reflexive and  so is $\cC$.\

If $\dim (W) \ge 3$, the projective scheme $S_1$   is integral hence   all regular functions on $S_1$ are constant.\ If  moreover the  base field is quadratically closed, they are squares and 
 the uniqueness of the double cover  follows from Theorem~\ref{theorem:covering-quadratic}.

If $S_2 \setminus S_3$ is nonsingular, 
the family of quadrics $\cQ$ is 2-regular at all points of $S_1 \setminus S_3$ by Lemma~\ref{lemma:regularity}.\
By Proposition~\ref{proposition:covering-smooth}, the scheme $\tS_1$ is nonsingular over $S_1 \setminus S_3$.\ 
This proves 2).

Under the hypotheses of 3), the relative Hilbert scheme $F_{d-1}(\cQ/S)$ is normal by Lemma~\ref{lemma:normality-f}, 
hence, by Proposition~\ref{proposition:isotropic-stein}, the double cover $f$ provides the Stein factorization.
\end{proof}

\section{Double covers of Lagrangian intersection loci}
\label{section:lagrangian}

Let $\cV$ be a vector bundle of rank $2n$ on a scheme $S$ and let $\omega \colon \bw2\cV \to \cL$ be a family 
of symplectic forms on $\cV$ (with values in a line bundle $\cL$).\
A {\sf Lagrangian subbundle} in $\cV$ is a rank-$n$ vector subbundle~$\cA \subset \cV$ 
 such that the composition
\begin{equation*}
\omega_{\cA,\cA} \colon 
 \cA \lhra \cV \xrightarrow[{}^\sim]{\ \omega\ } \cV^\vee \otimes \cL \twoheadrightarrow \cA^\vee \otimes \cL 
\end{equation*}
is zero.\
Consequently, for any Lagrangian subbundle $\cA \subset \cV$, there is an exact sequence
\begin{equation}\label{eq:lagrangian-sequence}
0 \to \cA \to \cV \to \cA^\vee \otimes \cL \to 0,
\end{equation} 
where the   map $\cV \to \cA^\vee \otimes \cL$  is the composition as above.

Let $\cA_1,\cA_2 \subset \cV$ be Lagrangian subbundles.
We define the subscheme
\begin{equation}\label{defsk}
S_k= S_k(\cA_1,\cA_2) \subset S
\end{equation}
as the corank-$k$ degeneracy locus of the morphism
\begin{equation*}
\omega_{\cA_1,\cA_2} \colon 
 \cA_1 \lhra \cV \xrightarrow[{}^\sim]{\ \omega\ } \cV^\vee \otimes \cL \twoheadrightarrow \cA_2^\vee \otimes \cL 
\end{equation*}
and set 
$S_k^0 := S_k \setminus S_{k+1}$.
Set-theoretically, the subscheme $S_k$ parameterizes points of $S$ at which 
the intersection of the fibers of $\cA_1$ and $\cA_2$ has  dimension at least $ k$.\
The subschemes $S_k \subset S$ are called the {\sf Lagrangian intersection loci} of $\cA_1$ and $\cA_2$.

We define the {\sf Lagrangian cointersection sheaf} as the cokernel of the map $\omega_{\cA_1,\cA_2}$.
We will be especially interested in its restrictions to various intersection loci, so we set 
\begin{equation*}
 \cC_k =   \cC_k(\cA_1,\cA_2) := \Coker(\cA_1 \xrightarrow{\ \omega_{\cA_1,\cA_2}\ } \cA_2^\vee \otimes \cL)\vert_{S_k(\cA_1,\cA_2)}
\end{equation*}
The next lemma shows that the subschemes $S_k(\cA_1,\cA_2)$ and the sheaves $\cC_k(\cA_1,\cA_2)$ do not depend on the ordering of the Lagrangian subbundles $\cA_1$ and $\cA_2$.

\begin{lemma}
We have $S_k(\cA_1,\cA_2) = S_k(\cA_2,\cA_1)$  and $\cC_k(\cA_1,\cA_2) \cong \cC_k(\cA_2,\cA_1)$.
\end{lemma}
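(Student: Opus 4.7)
My strategy is to rewrite both maps $\omega_{\cA_1,\cA_2}$ and $\omega_{\cA_2,\cA_1}$ in a manifestly symmetric form using the Lagrangian short exact sequences~\eqref{eq:lagrangian-sequence}. The key observation is that, under the canonical identification $\cA_i^\vee \otimes \cL \cong \cV/\cA_i$ furnished by~\eqref{eq:lagrangian-sequence}, the map $\omega_{\cA_1,\cA_2}$ becomes the natural composition $\cA_1 \hookrightarrow \cV \twoheadrightarrow \cV/\cA_2$, whose cokernel is the symmetric object $\cV/(\cA_1+\cA_2)$.

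First I would observe that the surjection $\cV \twoheadrightarrow \cA_2^\vee \otimes \cL$ appearing in~\eqref{eq:lagrangian-sequence} is, by the very construction of that sequence, the composition $\cV \xrightarrow{\omega} \cV^\vee \otimes \cL \twoheadrightarrow \cA_2^\vee \otimes \cL$ used to define $\omega_{\cA_1,\cA_2}$. Restricting this surjection to the subbundle $\cA_1 \hookrightarrow \cV$ therefore identifies $\omega_{\cA_1,\cA_2}$ with the natural map $\cA_1 \to \cV/\cA_2$, whose cokernel is canonically $\cV/(\cA_1+\cA_2)$, where $\cA_1+\cA_2 \subset \cV$ denotes the image of the sum map $\cA_1 \oplus \cA_2 \to \cV$. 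Since this expression is symmetric in $\cA_1$ and $\cA_2$, the same argument applied to $\omega_{\cA_2,\cA_1}$ produces a canonical isomorphism $\Coker(\omega_{\cA_1,\cA_2}) \cong \Coker(\omega_{\cA_2,\cA_1})$ of quotient sheaves of $\cV$.

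Next I would recall that $S_k(\cA_1,\cA_2) \subset S$ is the zero locus of the ideal of $(n-k+1)\times(n-k+1)$ minors of the rank-$n$ map $\omega_{\cA_1,\cA_2}$, and that this ideal coincides with the $(k-1)$-st Fitting ideal of the cokernel. Because Fitting ideals are intrinsic invariants of the cokernel sheaf, the isomorphism of cokernels from the previous paragraph immediately yields the scheme-theoretic equality $S_k(\cA_1,\cA_2) = S_k(\cA_2,\cA_1)$. Restricting the identified cokernels to this common subscheme then gives $\cC_k(\cA_1,\cA_2) \cong \cC_k(\cA_2,\cA_1)$, as required.

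I do not expect any serious obstacle. The only point requiring care is the verification that the surjection $\cV \twoheadrightarrow \cA_2^\vee \otimes \cL$ in~\eqref{eq:lagrangian-sequence} is indeed the one used in the definition of $\omega_{\cA_1,\cA_2}$, but this is literally how~\eqref{eq:lagrangian-sequence} is introduced in the paragraph preceding~\eqref{defsk}.
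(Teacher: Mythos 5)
Your proof is correct and is essentially the paper's argument: identifying $\Coker(\omega_{\cA_1,\cA_2})$ with $\cV/(\cA_1+\cA_2)$ is exactly the paper's rewriting of both sides as the cokernel (and degeneracy loci, via Fitting ideals) of the single morphism $\cA_1 \oplus \cA_2 \to \cV$. You simply spell out the identification $\cA_2^\vee \otimes \cL \cong \cV/\cA_2$ and the Fitting-ideal justification of the scheme-theoretic equality, which the paper leaves implicit.
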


\begin{proof}
Both sides of the equality (resp.\ of the isomorphism) can be rewritten as degeneracy loci (resp.\ cokernel sheaves) 
of the morphism $\cA_1 \oplus \cA_2 \to \cV$.
\end{proof}

\subsection{Double covers}

We construct  natural double covers of the schemes~$S_k = S_k(\cA_1,\cA_2)$ defined in~\eqref{defsk}.\
As in Section~\ref{subsection:covers-quadratic}, we assume that $S_k$ is normal and that $S_k^0 := S_k \setminus S_{k+1}$ is dense in $S_k$.\ 
We consider the rank-1 reflexive sheaf 
\begin{equation}
\label{eq:rk-ck-lag}
\cR_k \cong (\bw{k}\cC_k)^{\vee\vee}
\end{equation} 
on $S_k$.
 
\begin{theorem}\label{theorem:covering-lagrangian}
Assume that $S_k$ is normal   and that $\codim_{S_k}(S_{k+1}) \ge 2$.\
For each line bundle $\cM$ on~$S_k$ such that 
\begin{equation}
\label{eq:cm-cl-ca}
\big(\cL^{\otimes(-n-k)} \otimes \det(\cA_1) \otimes \det(\cA_2)\big)\big\vert_{S_k} \cong \cM^{\otimes2},
\end{equation} 
there is a double cover $f_\cM \colon \tS_k \to  S_k$, with $\tS_k$ normal, that satisfies  the following two properties:
\begin{itemize}
\item[\textnormal{(a)}] there is an isomorphism
\begin{equation*}
f_{\cM*}\cO_{\tS_k} \cong \cO_{S_k} \oplus (\cM \otimes \cR_k ),
\end{equation*}
\item[\textnormal{(b)}] the morphism $f_\cM$ is \'etale over the dense open subset~$S^0_k=S_k \setminus S_{k+1}$.
\end{itemize}
If all invertible functions on $S_k$ are squares, such a double cover is unique up to isomorphism.
\end{theorem}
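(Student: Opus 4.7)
The argument mirrors the proof of Theorem~\ref{theorem:covering-quadratic}: the main task is to produce a canonical self-duality of the rank-$1$ reflexive sheaf $\cM \otimes \cR_k$ from the symplectic datum, after which Proposition~\ref{proposition:divisor-cover} (together with Corollary~\ref{corollary:self-duality}) yields the double cover.

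I work first over the dense open locus $S_k^0 = S_k \setminus S_{k+1}$, where $\cC_k$ is locally free of rank $k$ and $\cR_k \cong \det(\cC_k)$. The intersection $\cK := \cA_1 \cap \cA_2 \subset \cV$ is then a rank-$k$ subbundle, and since $\cA_1$ and $\cA_2$ are Lagrangian, the $\omega$-orthogonal of $\cK$ coincides with $\cA_1 + \cA_2$; hence $\omega$ descends to a perfect $\cL$-valued pairing between $\cK$ and $\cV/(\cA_1+\cA_2)$. A diagram chase through the Lagrangian sequence~\eqref{eq:lagrangian-sequence} for $\cA_2$ identifies the latter quotient with $\cC_k$, so
\begin{equation*}
\cC_k|_{S_k^0} \cong \cK^\vee \otimes \cL \qquad \text{and} \qquad \cR_k|_{S_k^0} \cong \det(\cK)^{-1} \otimes \cL^{\otimes k}.
\end{equation*}

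Combining the two short exact sequences
\begin{equation*}
0 \to \cK \to \cA_1 \oplus \cA_2 \to \cA_1 + \cA_2 \to 0, \qquad 0 \to \cA_1 + \cA_2 \to \cV \to \cK^\vee \otimes \cL \to 0
\end{equation*}
with the Pfaffian isomorphism $\det(\cV) \cong \cL^{\otimes n}$ and taking determinants yields, after a short bookkeeping, the canonical isomorphism
\begin{equation*}
\cR_k^{\otimes 2}|_{S_k^0} \cong \big(\cL^{\otimes (n+k)} \otimes \det(\cA_1)^{-1} \otimes \det(\cA_2)^{-1}\big)\big|_{S_k^0}.
\end{equation*}
Tensoring by the chosen square root~\eqref{eq:cm-cl-ca} provides a canonical trivialization of $(\cM \otimes \cR_k)^{\otimes 2}$ over $S_k^0$.

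Because $\cM \otimes \cR_k$ is reflexive of rank~$1$ on the normal scheme $S_k$ and $\codim_{S_k}(S_{k+1}) \ge 2$, the reflexive hull of $(\cM \otimes \cR_k)^{\otimes 2}$ is determined by its restriction to $S_k^0$ (Lemma~\ref{lemma:reflexive}), so the above trivialization extends uniquely to $S_k$. By Corollary~\ref{corollary:self-duality}, the corresponding Weil divisor class is $2$-torsion, and Proposition~\ref{proposition:divisor-cover} then produces the normal double cover $f_\cM \colon \tS_k \to S_k$ satisfying (a) and (b) and supplies the uniqueness statement when every invertible function on $S_k$ is a square. The one nontrivial ingredient is the symplectic identification $\cC_k \cong \cK^\vee \otimes \cL$ on $S_k^0$; granting it, the determinant bookkeeping and the reflexive extension are both routine, exactly as in the quadratic case.
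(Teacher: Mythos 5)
Your proposal is correct and follows essentially the same route as the paper: produce a self-duality (equivalently, a trivialization of the square) of $\cM\otimes\cR_k$ over $S_k^0$, extend it across the codimension-$2$ complement using normality and reflexivity, and invoke Proposition~\ref{proposition:divisor-cover}. The only difference is cosmetic: where you compute $\cR_k^{\otimes2}\vert_{S_k^0}$ via $\cK=\cA_1\cap\cA_2$, the identification $\cC_k\cong\cK^\vee\otimes\cL$, and the Pfaffian trivialization $\det(\cV)\cong\cL^{\otimes n}$, the paper instead takes determinants of the induced nondegenerate form $\omega_k\colon\cA_{1,k}\isomto\cA_{2,k}^\vee\otimes\cL_k$ on the quotients $\cA_{i,k}=\cA_i/(\cC_k^\vee\otimes\cL)\vert_{S_k^0}$ --- an equivalent bookkeeping.
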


\begin{proof}
The argument is analogous to that of Theorem~\ref{theorem:covering-quadratic}: the maps $(\cA_i^\vee \otimes \cL)\vert_{S_k^0 } \to \cC_k\vert_{S_k^0 }$ are epimorphisms of vector bundles, 
so we may consider $(\cC_k^\vee \otimes \cL)\vert_{S_k^0 }$ as a subbundle of both $\cA_1\vert_{S_k^0 }$ and~$\cA_2\vert_{S_k^0 }$.\
Set
\begin{equation*}
\cA_{i,k} := (\cA_i\vert_{S_k^0})/(\cC_k^\vee \otimes \cL)\vert_{S_k^0 }
\qquad\text{and}\qquad 
\cL_k := \cL\vert_{S_k^0}.
\end{equation*}
Note that $\cA_{1,k}$ and $\cA_{2,k}$ are vector bundles of rank $n-k$ on $S_k^0$ with   canonical isomorphisms
\begin{equation*}
\det(\cA_{i,k}) \cong (\cL^{\otimes(-k)}  \otimes \det(\cA_i) \otimes \cR_k)\vert_{S_k^0}.
\end{equation*}
The map $\omega_{\cA_1,\cA_2}$ induces an isomorphism
\begin{equation*}
\omega_k \colon \cA_{1,k} \xrightarrow{\ \sim\ } \cA_{2,k}^\vee \otimes \cL_k
\end{equation*}
 {of sheaves on $S_k^0$}.\
Its determinant gives a  {canonical} isomorphism
\begin{equation*}
\det(\omega_k) \colon \big(\cL^{\otimes(-k)}  \otimes \det(\cA_1) \otimes \cR_k \big)\big\vert_{S_k^0 } \xrightarrow{\ \sim\ } 
\big(\cL^{\otimes(-k)}  \otimes \det(\cA_2) \otimes \cR_k \big)\big\vert^\vee_{S_k^0} \otimes \cL^{\otimes(n-k)} {\vert_{S_k^0}}.
\end{equation*}
Under our assumptions, this provides a self-duality on the line bundle $(\cM \otimes \cR_k)\vert_{S_k^0}$
 which extends uniquely to a self-duality on the rank-1 reflexive sheaf $\cM \otimes \cR_k$ on $S_k$.\
 By Proposition~\ref{proposition:divisor-cover}, it gives a double cover $f_\cM \colon \tS_k \to S_k$, with $\tS_k$ normal,  and $f_\cM$ is \'etale over $S_k^0$.\
The uniqueness also follows from Proposition~\ref{proposition:divisor-cover}.
\end{proof}

{We have the following analogue of Lemma~\ref{lemma:ksk-quadratic}.

\begin{lemma}
\label{lemma:ksk-lagrangian}
Assume that $S_k$ is normal, $\codim_S(S_k) = k(k+1)/2$, and $\codim_{S_k}(S_{k+1}) \ge 2$.\ 
We have {an equality} 
of Weil divisor classes
\begin{equation*}
K_{S_k} = 
\begin{cases}
K_S\vert_{S_k} + \rc_1(\cR_k) - \tfrac{k}2(\rc_1(\cA_1) + \rc_1(\cA_2)) + \tfrac{k(n+k)}2 \rc_1(\cL) & \text{if $k$ is even,}\\
K_S\vert_{S_k} - \tfrac{k+1}2(\rc_1(\cA_1) + \rc_1(\cA_2)) + \tfrac{(k+1)(n+k)}2 \rc_1(\cL) & \text{if $k$ is odd.}
\end{cases}
\end{equation*}
\end{lemma}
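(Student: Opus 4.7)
My plan is to follow the proof strategy of Lemma~\ref{lemma:ksk-quadratic}, with the key extra input being Proposition~\ref{proposition:lag-quad}, which identifies the Lagrangian intersection locus \'etale-locally with the quadratic degeneracy locus of a family of $\cL$-valued symmetric forms on $\cA_1$.

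First I would compute the normal bundle $N_{S_k^0/S}$. Using the \'etale-local description from Proposition~\ref{proposition:lag-quad}, one writes $\cA_2$ as the graph of a symmetric map $\phi\colon \cA_1 \to \cA_1^\vee \otimes \cL$; then $\omega_{\cA_1,\cA_2}$ becomes $\phi$ and $S_k$ becomes the corank-$k$ locus of $\phi$, to which the normal-bundle calculation in the proof of Lemma~\ref{lemma:ksk-quadratic} applies. This identifies $N_{S_k^0/S} \cong \Sym^2\cC_k$, and adjunction then gives
\[
K_{S_k^0} = K_S\vert_{S_k^0} + (k+1)\,\rc_1(\cR_k)
\]
as an equality of Cartier divisor classes on $S_k^0$.

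Next I would exploit the self-duality of $\cM \otimes \cR_k$ from Theorem~\ref{theorem:covering-lagrangian}, which, combined with the isomorphism~\eqref{eq:cm-cl-ca}, translates on $S_k^0$ into
\[
2\,\rc_1(\cR_k) = (n+k)\,\rc_1(\cL) - \rc_1(\cA_1) - \rc_1(\cA_2).
\]
Depending on the parity of $k$, I would eliminate $\rc_1(\cR_k)$ via this relation: for $k$ odd, rewrite $(k+1)\,\rc_1(\cR_k) = \tfrac{k+1}{2}\bigl(2\,\rc_1(\cR_k)\bigr)$; for $k$ even, rewrite $k\,\rc_1(\cR_k) = \tfrac{k}{2}\bigl(2\,\rc_1(\cR_k)\bigr)$, so that a single copy of $\rc_1(\cR_k)$ survives in the final expression. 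Routine arithmetic then produces the two claimed formulas as Cartier class identities on $S_k^0$. Finally, since $\codim_{S_k}(S_{k+1}) \ge 2$, every Cartier divisor class on $S_k^0$ extends uniquely to a Weil divisor class on $S_k$, so these identities automatically become the Weil divisor equalities stated in the lemma.

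The main obstacle is Step~1, namely the identification $N_{S_k^0/S} \cong \Sym^2\cC_k$. For a general corank-$k$ degeneration of a morphism of vector bundles, the normal bundle is isomorphic to $K^\vee \otimes \cC_k$, of rank~$k^2$; it is precisely the Lagrangian hypothesis, encoded via the symmetry of $\phi$ in Proposition~\ref{proposition:lag-quad}, that cuts this down to the symmetric subbundle $\Sym^2\cC_k$ of rank~$k(k+1)/2$, matching the codimension assumption $\codim_S(S_k) = k(k+1)/2$.
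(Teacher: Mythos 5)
Your proposal is correct and follows essentially the same route as the paper: identify the normal bundle of $S_k^0$ with $\Sym^2(\cC_k)$ as in Lemma~\ref{lemma:ksk-quadratic}, apply adjunction to get $K_{S_k^0}=K_S\vert_{S_k^0}+(k+1)\rc_1(\cR_k)$, use the determinant relation $2\rc_1(\cR_k)+\rc_1(\cA_1)+\rc_1(\cA_2)-(n+k)\rc_1(\cL)=0$ from the proof of Theorem~\ref{theorem:covering-lagrangian}, eliminate $\rc_1(\cR_k)$ according to the parity of $k$, and extend across the codimension-$2$ locus $S_{k+1}$. Your appeal to Proposition~\ref{proposition:lag-quad} to justify the normal-bundle identification is simply a more explicit version of the paper's ``as in Lemma~\ref{lemma:ksk-quadratic}'' and is perfectly adequate.
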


\begin{proof}
As in Lemma~\ref{lemma:ksk-quadratic}, the conormal bundle to $S_k^0$ is isomorphic to $\Sym^2(\cC_k)$, 
so we obtain the equality $K_{S_k^0} = K_S\vert_{S_k^0} + (k+1)\rc_1(\cR_k)$.\
The proof of Theorem~\ref{theorem:covering-lagrangian} shows that on~$S_k^0$, we have 
\begin{equation*}
2\rc_1(\cR_k) + \rc_1(\cA_1) + \rc_1(\cA_2) - (n+k)\rc_1(\cL) = 0.
\end{equation*}
Repeating the argument of Lemma~\ref{lemma:ksk-quadratic}, we deduce the required equalities.
\end{proof}

}

\subsection{Isotropic reduction}

Let $\cI \subset \cV$ be an isotropic subbundle of rank~$r$,  that is, a   subbundle such that 
the composition 
$\omega_{\cI,\cI} {\colon \cI \to \cI^\vee \otimes \cL}$
 is zero.\
 Then,
 \begin{equation*}
\bcV := \tcV / \cI,
\quad \textnormal{where}\quad
\tcV := \Ker(\cV \xrightarrow[{}^{\scriptstyle{\sim}}]{\ \omega\ } \cV^\vee \otimes \cL \twoheadrightarrow \cI^\vee \otimes \cL),
\end{equation*}
 is a vector bundle on $S$ of rank $2(n-r)$ and the symplectic form $\omega$ on $\cV$ 
induces a symplectic form 
\begin{equation*}
\bar\omega \colon \bcV \lra \bcV^\vee \otimes \cL.
\end{equation*}
The pair $(\bcV,\bar\omega)$ is called the {\sf isotropic reduction} of $(\cV,\omega)$ with respect to $\cI$.

Let $\cA \subset \cV$ be a Lagrangian subbundle.

\begin{lemma}
Assume that the composition 
\begin{equation}
\label{eq:map-ca-ci}
\cA \lhra \cV \lra \cI^\vee \otimes \cL
\end{equation}
has constant rank.\ Its kernel $\widetilde\cA$ is a subbundle of $\tcV $ 
and  {the image $\bcA$ of $\widetilde\cA$ in $\bcV$} is a Lagrangian subbundle.
\end{lemma}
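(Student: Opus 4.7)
The plan is to verify the two assertions---subbundleness of $\widetilde\cA$ inside $\tcV$, and the Lagrangian subbundle property of $\bcA$ inside $\bcV$---by exploiting the constant rank hypothesis together with the duality supplied by~$\omega$. Let $t$ denote the constant rank of $\cA \to \cI^\vee \otimes \cL$; then its kernel $\widetilde\cA$ is automatically a subbundle of $\cA$ of rank $n-t$. By construction $\widetilde\cA \subset \tcV$ as subsheaves of $\cV$, and $\tcV$ is itself a subbundle of $\cV$ of rank $2n-r$, because $\omega$ identifies the map $\cV \to \cI^\vee \otimes \cL$ with the surjection $\cV^\vee \otimes \cL \twoheadrightarrow \cI^\vee \otimes \cL$ dual to the inclusion $\cI \hookrightarrow \cV$. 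I would then invoke the standard transitivity: if $\cF \subset \cG \subset \cH$ with $\cF,\cG$ both subbundles of $\cH$, the exact sequence $0 \to \cG/\cF \to \cH/\cF \to \cH/\cG \to 0$ shows $\cG/\cF$ is locally free, so $\cF$ is a subbundle of $\cG$; applied to $\widetilde\cA \subset \tcV \subset \cV$, this gives the first assertion.

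For the second assertion, the key step is to compute the kernel of the composition $\widetilde\cA \hookrightarrow \tcV \twoheadrightarrow \bcV$, which equals $\widetilde\cA \cap \cI = \cA \cap \cI$ (using $\cI \subset \tcV$). Here I would use that $\cA$ is Lagrangian, so $\cA^\perp = \cA$ and hence $\cA \cap \cI = \Ker(\cI \to \cA^\vee \otimes \cL)$; this latter map is the adjoint of $\cA \to \cI^\vee \otimes \cL$ under the pairing $\omega \colon \cA \otimes \cI \to \cL$. Since adjoint morphisms between locally free sheaves have the same pointwise rank, the transpose also has constant rank $t$, making $\cA \cap \cI$ a subbundle of rank $r-t$. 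Consequently the composition $\widetilde\cA \to \bcV$ has constant rank $(n-t)-(r-t) = n-r$, and its image $\bcA$ is a subbundle of $\bcV$ of rank $n-r$.

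To conclude, $\bcA$ is isotropic for $\bar\omega$ because $\widetilde\cA \subset \cA$ is isotropic for $\omega$ and $\bar\omega$ is induced from $\omega$ on $\bcV = \tcV/\cI$; since $\rank(\bcA) = n-r = \tfrac12 \rank(\bcV)$, the isotropic subbundle $\bcA$ is Lagrangian. The main obstacle is the identification $\cA \cap \cI = \Ker(\cI \to \cA^\vee \otimes \cL)$ and the resulting propagation of the constant rank hypothesis to the adjoint map; it is here that the Lagrangian property of~$\cA$ is essential, for without it one would have no control on the pointwise dimension of $\bcA$.
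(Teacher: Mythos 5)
Your proof is correct, but it takes a more elementary, fiberwise route than the paper. You identify the kernel of $\widetilde\cA \to \bcV$ directly as $\cA \cap \cI$, use the Lagrangian property of $\cA$ to rewrite it as $\cK = \Ker(\cI \to \cA^\vee \otimes \cL)$, control its rank by the pointwise equality of rank between the map $\cA \to \cI^\vee \otimes \cL$ and its transpose, and then conclude by ``isotropic of half rank equals Lagrangian''; note that the inclusion $\cA \cap \cI \subset \widetilde\cA$, which you use silently when writing $\widetilde\cA \cap \cI = \cA \cap \cI$, rests on the isotropy of $\cI$, a one-line check worth recording. The paper instead arranges the data into a commutative diagram whose columns are the exact sequences $0 \to \cA \to \cV \to \cA^\vee \otimes \cL \to 0$ (and the identities on $\cI$ and $\cI^\vee \otimes \cL$) and whose rows are complexes, and applies the snake lemma to obtain the self-dual exact sequence $0 \to \cK \to \widetilde\cA \to \bcV \to \widetilde\cA^\vee \otimes \cL \to \cK^\vee \otimes \cL \to 0$; from this single sequence the local freeness, the identification $\bcA = \widetilde\cA/\cK$, and the fact that $\bcA$ is its own $\bar\omega$-orthogonal in $\bcV$ all follow at once. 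Your approach makes transparent exactly where the Lagrangian hypothesis on $\cA$ and the isotropy of $\cI$ enter, at the cost of an explicit rank bookkeeping; the paper's homological packaging hides the fiberwise counting and hands you the orthogonal complement of $\bcA$ structurally, which is slightly stronger information than the half-rank argument. Both arguments hinge on the same two auxiliary facts: the transposed map $\cI \to \cA^\vee \otimes \cL$ also has constant rank, and its kernel $\cK$ is precisely the kernel of $\widetilde\cA \to \bcV$.
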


\begin{proof}
Both $\widetilde\cA$  and   the kernel  {$\cK := \Ker(\cI \to \cA^\vee \otimes \cL)$} 
of the transposed map of~\eqref{eq:map-ca-ci} are locally free.
Consider the commutative diagram
\begin{equation*}
\xymatrix@M=5pt{
& \cA \ar[r] \ar@{_(->}[d] & \cI^\vee \otimes \cL \ar@{=}[d] \\
\cI \ar@{^(->}[r] \ar@{=}[d] & \cV \ar[r] \ar[d] & \cI^\vee \otimes \cL \\
\cI \ar[r] & \cA^\vee \otimes \cL.
}
\end{equation*}
Its rows are complexes and its columns are exact sequences.\ By the snake lemma, it induces a long exact sequence
\begin{equation*}
0 \to \cK \to \widetilde\cA \to \bcV \to \widetilde\cA^\vee \otimes \cL \to \cK^\vee \otimes \cL \to 0.
\end{equation*}
It follows that $\bcA = \widetilde\cA / \cK$ is a Lagrangian subbundle in $\bcV$.
\end{proof}

We call the Lagrangian subbundle $\bcA \subset \bcV$ the {\sf isotropic reduction} of $\cA$ (with respect to $\cI$).

\begin{proposition}\label{proposition:isotropic-reduction}
Let $\cA_1,\cA_2 \subset \cV$ be Lagrangian subbundles and 
let $\cI_1 \subset \cA_1$, $\cI_2 \subset \cA_2$ be subbundles such that 
the morphisms $\cI_1 \oplus \cA_2 \to \cV$ and $\cA_1 \oplus \cI_2 \to \cV$ are embeddings of vector bundles  
\textup(so that the respective quotients are vector bundles\textup)  
and the image $\cI := \Im(\cI_1 \oplus \cI_2 \to \cV)$ is isotropic.

If $\bcA_1$ and $\bcA_2$ are {the isotropic reductions with respect to $\cI$}, we have 
\begin{equation*}
S_k(\bcA_1,\bcA_2) = S_{k}(\cA_1,\cA_2)
\qquad\text{for all $k$}.
\end{equation*}
Moreover, if the scheme $S_k(\cA_1,\cA_2)$ is normal  and $\codim_{S_k(\cA_1,\cA_2)}(S_{k+1}(\cA_1,\cA_2)) \ge 2$, we have,
for any line bundle $\cM$ on $S_k(\cA_1,\cA_2)$ satisfying~\eqref{eq:cm-cl-ca}, an isomorphism
\begin{equation}
\label{eq:cm-cl-bca}
\big(\cL^{\otimes(-(n-r)-k)} \otimes \det(\bcA_1) \otimes \det(\bcA_2)\big)\big\vert_{S_k(\bcA_1,\bcA_2)} \cong \cM^{\otimes2} 
\end{equation}
and the corresponding double covers are isomorphic: there is a commutative diagram
\begin{equation*}
\xymatrix@C=3em{
\tS_k(\bcA_1,\bcA_2) \ar[r]^-\sim \ar[d]_{\bar{f}_\cM} &
\tS_{k}(\cA_1,\cA_2) \ar[d]^{f_\cM}
\\
S_k(\bcA_1,\bcA_2) \ar@{=}[r]  &
S_{k}(\cA_1,\cA_2).
}
\end{equation*}
 \end{proposition}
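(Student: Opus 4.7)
The strategy is to reduce the proposition to a diagrammatic comparison of the Lagrangian cointersection sheaves $\cC(\cA_1,\cA_2)$ and $\cC(\bcA_1,\bcA_2)$: once they are canonically identified, the loci~$S_k$ (defined by Fitting ideals of the cokernel of $\omega_{\cA_i,\cA_j}$), the sheaves~$\cR_k$, and the line bundle conditions will match, and the double covers will agree once the two self-duality isomorphisms are shown to coincide. First, the injectivity of $\cA_1 \oplus \cI_2 \hookrightarrow \cV$ forces $\cI_1 \cap \cI_2 = 0$, so that $\cI \cong \cI_1 \oplus \cI_2$. Since $\cA_1$ is Lagrangian and contains~$\cI_1$, the map $\cA_1 \to \cI^\vee \otimes \cL$ vanishes on the $\cI_1$-factor and agrees with the surjection of vector bundles $\cA_1 \twoheadrightarrow \cI_2^\vee \otimes \cL$ transpose to the subbundle inclusion $\cI_2 \hookrightarrow \cA_1^\vee \otimes \cL$; hence $\tcA_1 \subset \cA_1$ is a vector subbundle and $\bcA_1 = \tcA_1/\cI_1$ is a well-defined Lagrangian in~$\bcV$, and symmetrically for $\cA_2$.

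I would then build the canonical isomorphism $\cC(\cA_1,\cA_2) \cong \cC(\bcA_1,\bcA_2)$ using two snake-lemma diagrams. The first is
\[
\begin{CD}
0 @>>> \tcA_1 @>>> \cA_1 @>>> \cI_2^\vee \otimes \cL @>>> 0 \\
@. @VVV @VV\omega_{\cA_1,\cA_2}V @VV\id V @. \\
0 @>>> (\cA_2/\cI_2)^\vee \otimes \cL @>>> \cA_2^\vee \otimes \cL @>>> \cI_2^\vee \otimes \cL @>>> 0,
\end{CD}
\]
whose right vertical arrow is the identity because both descriptions of the map $\cA_1 \to \cI_2^\vee \otimes \cL$ send $a_1$ to $\omega(a_1,\cdot)|_{\cI_2}$; the snake lemma produces an isomorphism $\Coker(\tcA_1 \to (\cA_2/\cI_2)^\vee \otimes \cL) \cong \cC(\cA_1,\cA_2)$. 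The second diagram is
\[
\begin{CD}
0 @>>> \cI_1 @>>> \tcA_1 @>>> \bcA_1 @>>> 0 \\
@. @VV\sim V @VVV @VV\omega_{\bcA_1,\bcA_2}V @. \\
0 @>>> \cI_1 @>>> (\cA_2/\cI_2)^\vee \otimes \cL @>>> \bcA_2^\vee \otimes \cL @>>> 0,
\end{CD}
\]
whose bottom row is the dual (twisted by~$\cL$) of the exact sequence $0 \to \bcA_2 \to \cA_2/\cI_2 \to \cI_1^\vee \otimes \cL \to 0$ coming from the symmetric hypothesis $\cI_1 \oplus \cA_2 \hookrightarrow \cV$; a second snake lemma identifies the cokernel of its middle column with $\cC(\bcA_1,\bcA_2)$. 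Composing yields the desired canonical isomorphism, and passing to Fitting ideals gives the scheme-theoretic equality $S_k(\cA_1,\cA_2) = S_k(\bcA_1,\bcA_2)$ for all~$k$, together with a canonical identification $\cR_k(\cA_1,\cA_2) \cong \cR_k(\bcA_1,\bcA_2)$.

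Taking determinants of the short exact sequences $0 \to \cI_i \to \tcA_i \to \bcA_i \to 0$ and $0 \to \tcA_i \to \cA_i \to \cI_{3-i}^\vee \otimes \cL \to 0$ for $i=1,2$ gives
\[
\det(\bcA_1) \otimes \det(\bcA_2) \cong \det(\cA_1) \otimes \det(\cA_2) \otimes \cL^{-r},
\]
with $r = \rank(\cI_1) + \rank(\cI_2)$, so that condition~\eqref{eq:cm-cl-bca} is equivalent to~\eqref{eq:cm-cl-ca} for the same line bundle~$\cM$. The main technical obstacle is to check that the two self-duality isomorphisms on $\cM \otimes \cR_k$, coming respectively from Theorem~\ref{theorem:covering-lagrangian} applied to~$(\cA_1,\cA_2)$ and to~$(\bcA_1,\bcA_2)$, actually \emph{coincide} under the identifications above, and not merely differ by the square of an invertible function. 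I would verify this on the dense open~$S_k^0$, where both self-dualities are determinants of the explicit isomorphisms~$\omega_k$ and~$\bar\omega_k$ constructed in the proof of Theorem~\ref{theorem:covering-lagrangian}, by taking determinants throughout the two snake-lemma diagrams above and using multiplicativity in short exact sequences to see that the boundary contributions from the vertical identity on $\cI_2^\vee \otimes \cL$ and from the vertical isomorphism on~$\cI_1$ cancel out. Once the compatibility is established over~$S_k^0$, it extends uniquely to all of~$S_k$ by normality of both double covers, producing the asserted commutative square.
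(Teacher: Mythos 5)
Your proposal is correct and follows essentially the same strategy as the paper: identify the cointersection sheaves $\cC(\cA_1,\cA_2)\cong\cC(\bcA_1,\bcA_2)$ via commutative diagrams of short exact sequences (hence equal Fitting ideals and degeneracy loci), match determinants to pass from~\eqref{eq:cm-cl-ca} to~\eqref{eq:cm-cl-bca}, and then compare $\det(\omega_k)$ with $\det(\bar\omega_k)$ over $S_k^0$ before extending by normality. The only difference is organizational: the paper treats the two one-sided cases $\cI_2=0$ and $\cI_1=0$ and composes the reductions in two steps, whereas you handle the general case in one pass with two stacked snake-lemma diagrams, and your flagged determinant compatibility on $S_k^0$ is checked in the paper at exactly the same level of detail.
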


\begin{proof}
The hypotheses imply that the morphisms $\cI_1 \to \cV \to \cA_2^\vee \otimes \cL$ and $\cI_2 \to \cV \to \cA_1^\vee \otimes \cL$  are embeddings of vector bundles, hence their dual maps are epimorphisms.\
 On the other hand, the maps~$\cI_1 \to \cV \to \cA_1^\vee \otimes \cL$ and~$\cI_2 \to \cV \to \cA_2^\vee \otimes \cL$ are zero, hence so are their duals.\ This means that the image  of the map~$\cA_i \to \cV \to \cI^\vee \otimes \cL$ is $\cI_{3-i}^\vee \otimes \cL$.\  In particular, these maps have constant rank and   the isotropic reductions $\bcA_1$ and $\bcA_2$ are well defined.

Consider the case   $\cI_2 = 0$ and $\cI = \cI_1$.\ The isotropic reductions of $\cA_1$ and $\cA_2$ are then given 
by~$\bcA_1 = \cA_1/\cI$ and $\bcA_2 = \Ker(\cA_2 \to \cI^\vee \otimes \cL)$.\ Therefore,
\begin{equation*}
\det(\bcA_1) \cong \det(\cA_1) \otimes \det(\cI)^\vee 
\quad \textnormal{and}\quad
\det(\bcA_2) \cong \det(\cA_2) \otimes \det(\cI) \otimes \cL^{\otimes(-r)},
\end{equation*}
hence we have~\eqref{eq:cm-cl-bca}.\ Furthermore, there is a commutative diagram
\begin{equation*}
\xymatrix{
0 \ar[r] & \cI \ar[r] \ar@{=}[d] & \cA_1 \ar[r] \ar[d]^{\omega_{\cA_1,\cA_2}} & \bcA_1 \ar[r] \ar[d]^{\omega_{\bcA_1,\bcA_2}} & 0
\\
0 \ar[r] & \cI \ar[r] & \cA_2^\vee \otimes \cL \ar[r] & \bcA_2^\vee \otimes \cL \ar[r] & 0.
}
\end{equation*}
The cointersection sheaves $\cC = \Coker( \omega_{\cA_1,\cA_2})$ and $\overline{\cC} = \Coker ( \omega_{\bcA_1,\bcA_2})$ are therefore isomorphic.\ 
Since the Lagrangian intersection loci are defined via the rank stratification of the cointersection sheaf
(that is, their ideals  are the Fitting ideals of the cointersection sheaf), 
we deduce an equality  {of subschemes} $S_k(\bcA_1,\bcA_2) = S_k(\cA_1,\cA_2)$ for all $k$.

To identify the double covers, we consider, after identifying $\cC_k$ and $\overline{\cC}_k$, the diagram
\begin{equation*}
\xymatrix{
0 \ar[r] & \cI\vert_{S_k^0} \ar[r] \ar@{=}[d] & \cA_{1,k} \ar[r] \ar[d]^{\omega_k} & \bcA_{1,k} \ar[r] \ar[d]^{\bar\omega_k} & 0
\\
0 \ar[r] & \cI\vert_{S_k^0} \ar[r] & \cA_{2,k}^\vee \otimes \cL_k \ar[r] & \bcA_{2,k}^\vee \otimes \cL_k \ar[r] & 0
}
\end{equation*}
It implies  $\det(\omega_k) = \det(\bar\omega_k)$, hence the double covers of~$S_k^0$ before and after the isotropic reduction are the same.\ Since the double covers of $S_k$ are obtained by taking the normal closures, they are the same too.

The case  $\cI_1 = 0$ and $\cI = \cI_2$ can be dealt with in the same way (just switch the roles of~$\cA_1$ and~$\cA_2$).\ Finally,  the general isotropic reduction (when both $\cI_1$ and $\cI_2$ are nonzero) can be done in two steps: first consider the reduction
with respect to $\cI_1$ and then the reduction with respect to $\cI_2$.\ So, applying twice the above argument, we deduce the general claim.
\end{proof}

\subsection{Relation to quadratic covers}

We show  that  Lagrangian intersection loci and their cointersection sheaves can  {locally (and sometimes also globally)} be written as quadratic degeneracy loci and their cokernel sheaves for appropriate families of quadrics.

Let as above   $\cA_1,\cA_2 \subset \cV$ be Lagrangian subbundles and let $\cA_3 \subset \cV$ be another Lagrangian subbundle 
such that  $S_1(\cA_1,\cA_3) = S_1(\cA_2,\cA_3) = \varnothing$, that is, both maps 
\begin{equation}
\label{eq:omega-13-23}
\omega_{\cA_3,\cA_2} \colon \cA_3 \lra \cA_2^\vee \otimes \cL
\qquad\text{and}\qquad
\omega_{\cA_1,\cA_3} \colon \cA_1 \lra \cA_3^\vee \otimes \cL
\end{equation}
are isomorphisms.
We show that \'etale locally, such an $\cA_3$ always exists.

\begin{lemma}
For any closed point $s \in S$, there is an \'etale neighborhood $(U,u) \to (S,s)$ and a Lagrangian subbundle $\cA_3 \subset \cV_U$ 
such that the maps~\eqref{eq:omega-13-23} are isomorphisms.
 \end{lemma}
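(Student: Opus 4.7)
The plan is to work on the Lagrangian Grassmannian bundle $\LGr_S(n,\cV) \to S$, which is smooth and projective over $S$ with fibers isomorphic to $\LGr(n,2n)$.\ The Lagrangian subbundles $\cA_1,\cA_2 \subset \cV$ correspond to sections $\sigma_1,\sigma_2 \colon S \to \LGr_S(n,\cV)$, and the condition we seek on $\cA_3$ is an openness (transversality) condition:\ a Lagrangian subbundle $\cA_3 \subset \cV_U$ makes both maps in~\eqref{eq:omega-13-23} isomorphisms if and only if $\cA_3 \cap \cA_1 = \cA_3 \cap \cA_2 = 0$ fiberwise, which is an open condition on the base.

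First I would construct a suitable Lagrangian subspace in the fiber at $s$.\ Let $\kk'/\kappa(s)$ be an algebraic closure.\ In the symplectic vector space $\cV_s \otimes \kk'$, the locus of Lagrangians transverse to a fixed Lagrangian $L$ is the complement of a Schubert divisor, hence a nonempty Zariski-open subset of $\LGr(n,\cV_s\otimes\kk')$; since this Grassmannian is irreducible, the intersection of the two open loci corresponding to $\cA_{1,s}$ and $\cA_{2,s}$ is a nonempty open subset $T$.\ (Explicitly, a transverse Lagrangian can be built by choosing a symplectic basis adapted to $\cA_{1,s}$ for which $\cA_{2,s}$ is described as the graph of a symmetric form, then perturbing.)\ Because this open set is defined over $\kappa(s)$ and is nonempty over $\kk'$, it contains a point defined over some finite separable extension $\kappa(u)$ of $\kappa(s)$.

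Second, I would promote this to a section over an étale neighborhood.\ Take any étale neighborhood $(U,u) \to (S,s)$ realizing the residue field extension $\kappa(u)/\kappa(s)$, so that we obtain a $\kappa(u)$-point $[A_3] \in \LGr_U(n,\cV_U)$ lying over $u$ which represents a Lagrangian transverse to both $\cA_{1,u}$ and $\cA_{2,u}$.\ Since the morphism $\LGr_U(n,\cV_U) \to U$ is smooth, by the infinitesimal lifting criterion (or \cite[Tag~039P]{sp}) there exists an étale neighborhood $(U',u') \to (U,u)$ together with a section $U' \to \LGr_{U'}(n,\cV_{U'})$ sending $u'$ to (the pullback of) $[A_3]$.\ This section defines a Lagrangian subbundle $\cA_3 \subset \cV_{U'}$ with $\cA_{3,u'} = A_3$.

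Finally, the two morphisms $\omega_{\cA_3,\cA_2}$ and $\omega_{\cA_1,\cA_3}$ on $U'$ are morphisms of vector bundles of equal rank, and by construction they are isomorphisms at $u'$.\ Since the nonvanishing of their determinants is an open condition, we may shrink $U'$ around $u'$ to ensure they are isomorphisms everywhere, which finishes the proof.\ The main obstacle is the first step:\ producing the transverse Lagrangian in the fiber, which requires the nonemptiness assertion in the Lagrangian Grassmannian together with a field-of-definition argument, both handled by passing to an étale neighborhood.
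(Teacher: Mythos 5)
Your proposal is correct and follows essentially the same route as the paper: pick a Lagrangian in the fiber at $s$ avoiding the two Schubert divisors of Lagrangians meeting $\cA_{1,s}$ and $\cA_{2,s}$, lift it to an \'etale-local section of the smooth relative Lagrangian Grassmannian $\LGr_S(\cV)\to S$, and shrink so that the two maps, being isomorphisms at the marked point, stay isomorphisms. Your extra care with the residue field (choosing the point over a finite separable extension and realizing it by an \'etale neighborhood) is a valid refinement of the paper's "choose a Lagrangian subspace" step, but the argument is otherwise the same.
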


\begin{proof}
We may assume that the vector bundles $\cA_1$, $\cA_2$, $\cV$, and $\cL$ are trivial.\
Let $V$ be the fiber of~$\cV$ at point $s$ and let $A_1, A_2 \subset V$ be the fibers of $\cA_1$ and~$\cA_2$.\
 For each $i \in \{1,2\}$, the set  of Lagrangian subspaces $A \subset V$ such that $A \cap A_i \ne 0$ 
 is a Schubert hyperplane in the Lagrangian Grassmannian~$\LGr(V)$, 
hence one can choose a Lagrangian subspace $A \subset V$ such that 
\begin{equation*}
A \cap A_1 = A \cap A_2 = 0.
\end{equation*}
Since  {the projection $\LGr_S(\cV) \to S$ of} the relative Lagrangian Grassmannian  is  smooth,  it has, locally in the \'etale topology, 
a section passing through the point~$[A]$ in the fiber over~$s$.\ We define $\cA_3$ to be the corresponding Lagrangian subbundle.\ The maps~\eqref{eq:omega-13-23} are isomorphisms at  $s$ by definition of $\cA_3$.\ Shrinking~$S$ if necessary, we may assume that they are isomorphisms on $S$.
 \end{proof}

Assume that $\cA_3$ is chosen so that the maps in~\eqref{eq:omega-13-23} are isomorphisms.\ The composition
\begin{equation}
\label{eq:q-omega}
\cA_3^\vee \otimes \cL \xrightarrow[{}^\sim]{\  {\omega_{\cA_1,\cA_3}^{-1}}\ }
\cA_1 \xrightarrow{\ \omega_{\cA_1,\cA_2}\ }
\cA_2^\vee \otimes \cL  \xrightarrow[{}^\sim]{\  {\omega_{\cA_3,\cA_2}^{-1}}\ }
\cA_3 
\end{equation}
gives a family of bilinear forms 
\begin{equation*}
q \colon \cL \lra \cE^\vee \otimes \cE^\vee
\end{equation*}
on the rank-$n$ vector bundle 
\begin{equation*}
\cE := \cA_3^\vee.
\end{equation*}

\begin{proposition}
\label{proposition:lag-quad}
The family of bilinear forms $q$ is symmetric.\
Moreover, the Lagrangian intersection loci and cointersection sheaves 
coincide with the corresponding quadratic degeneracy loci and cokernel sheaves:
\begin{equation*}
S_k(\cA_1,\cA_2) = S_k(q) 
\qquad  {and}\qquad
\cC_k(\cA_1,\cA_2) \cong \cC_k(q).
\end{equation*}
Denoting the   scheme $S_k(q)$ simply by $S_k$, we have,
for any line bundle $\cM$ on $S_k$ satisfying the isomorphism~\eqref{eq:cm-cl-ca},
\begin{equation}
\label{eq:cm-cl-q}
\cL^{\otimes(n-k)}\vert_{S_k} \cong (\cM \otimes \det(\cE)^\vee)^{\otimes2}.
\end{equation}
Finally, if $S_k$ is normal  and $\codim_{S_k}(S_{k+1}) \ge 2$,
the double covers  {in Theorems~\textup{\ref{theorem:covering-quadratic}} and~\textup{\ref{theorem:covering-lagrangian}}
respectively associated with the line bundles $\cM \otimes \det(\cE)^\vee$ and $\cM$} are isomorphic: there is a commutative diagram
\begin{equation*}
\xymatrix@C=3em{
\tS_k(q) \ar[r]^-\sim \ar[d]_{f_{\cM \otimes \det(\cE)^\vee}} &
\tS_{k}(\cA_1,\cA_2) \ar[d]^{f_\cM}
\\
S_k(q) \ar@{=}[r]  &
S_{k}(\cA_1,\cA_2).
}
\end{equation*}
 \end{proposition}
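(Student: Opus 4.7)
The proof proceeds in four stages, one for each assertion in the proposition.

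\emph{Symmetry of $q$.} I would argue pointwise. Fix $s \in S$ and write $V$, $A_i$ for the fibers. The hypothesis that $\omega_{\cA_3, \cA_2}$ is an isomorphism gives $V = A_2 \oplus A_3$. For $\xi, \xi' \in A_3^\vee$, write $\xi = \omega(a_1, -)|_{A_3}$ and $\xi' = \omega(a_1', -)|_{A_3}$ with $a_1, a_1' \in A_1$, and decompose $a_1 = a_2 + a_3$, $a_1' = a_2' + a_3'$ along $V = A_2 \oplus A_3$. Unwinding~\eqref{eq:q-omega} shows $q(\xi) = a_3$, so the associated bilinear form satisfies $Q(\xi, \xi') := \xi'(q(\xi)) = \omega(a_2', a_3)$ (using that $A_3$ is Lagrangian). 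Expanding $\omega(a_1, a_1')$ via the decomposition and using the Lagrangian condition on $A_2$ and $A_3$ gives $\omega(a_1, a_1') = \omega(a_2, a_3') - \omega(a_2', a_3) = -(Q(\xi, \xi') - Q(\xi', \xi))$. Since $A_1$ is Lagrangian, $\omega(a_1, a_1') = 0$, yielding symmetry.

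\emph{Equality of loci and cointersection sheaves.} The composition~\eqref{eq:q-omega} writes $q = \alpha_{32}^{-1} \circ \omega_{\cA_1, \cA_2} \circ \alpha_{13}^{-1}$, where $\alpha_{13}^{-1}$ and $\alpha_{32}^{-1}$ are global isomorphisms of vector bundles. Hence $\alpha_{32}^{-1}$ induces an isomorphism between the cokernels of $\omega_{\cA_1, \cA_2}$ and $q$. Since the Lagrangian intersection loci and the quadratic degeneracy loci are both defined by the Fitting ideals of their respective cokernels, this yields $S_k(\cA_1, \cA_2) = S_k(q)$ scheme-theoretically and $\cC_k(\cA_1, \cA_2) \cong \cC_k(q)$.

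\emph{Verifying~\eqref{eq:cm-cl-q}.} Taking determinants of the isomorphisms $\alpha_{13}$ and $\alpha_{32}$ yields $\det(\cA_i) \cong \det(\cA_3)^{-1} \otimes \cL^{\otimes n}$ for $i = 1, 2$, whence
\begin{equation*}
\det(\cA_1) \otimes \det(\cA_2) \cong \det(\cA_3)^{\otimes(-2)} \otimes \cL^{\otimes(2n)} \cong \det(\cE)^{\otimes 2} \otimes \cL^{\otimes(2n)},
\end{equation*}
using $\cE = \cA_3^\vee$. Substituting this into~\eqref{eq:cm-cl-ca} gives $\cM^{\otimes 2} \cong \cL^{\otimes(n-k)} \otimes \det(\cE)^{\otimes 2}$, which rearranges to~\eqref{eq:cm-cl-q} with square root $\cM \otimes \det(\cE)^\vee$.

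\emph{Identification of the covers.} With $\cM_0 := \cM \otimes \det(\cE)^\vee$, Theorem~\ref{theorem:covering-quadratic} applied to $q$ and Theorem~\ref{theorem:covering-lagrangian} applied to $(\cA_1, \cA_2)$ each produce a normal double cover of $S_k$ with pushforward $\cO_{S_k} \oplus (\cM \otimes \cR_k)$, \'etale over $S_k^0$. Since both covers are normal and $\codim_{S_k}(S_{k+1}) \ge 2$, it suffices to check that their algebra structures on $\cO_{S_k^0} \oplus (\cM \otimes \cR_k)|_{S_k^0}$ coincide, the self-duality then extending uniquely to $S_k$ by reflexivity. On $S_k^0$ the reduced maps decompose as $q_k = \gamma_{2,k} \circ \omega_k \circ \gamma_{1,k}$ with $\gamma_{i,k}$ induced from $\alpha_{13}^{-1}$ and $\alpha_{32}^{-1}$; taking determinants and combining with the compatibility of the square root isomorphisms derived in the preceding stage, the two self-duality isomorphisms on $(\cM \otimes \cR_k)|_{S_k^0}$ coincide. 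This compatibility check — bookkeeping of natural identifications through the various constructions — is the main technical obstacle; once settled, the algebra structures match and the commutative diagram follows.
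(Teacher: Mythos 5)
Your proposal is correct and follows essentially the same route as the paper: factor $q$ through $\omega_{\cA_1,\cA_2}$ via the two isomorphisms~\eqref{eq:omega-13-23}, identify the loci through the Fitting ideals of the isomorphic cokernel sheaves, take determinants to pass from~\eqref{eq:cm-cl-ca} to~\eqref{eq:cm-cl-q}, and compare the self-duality isomorphisms on $S_k^0$ via the induced factorization $q_k=\gamma_{2,k}\circ\omega_k\circ\gamma_{1,k}$, which is exactly the content of the paper's commutative diagram and its identity $\det(q_k)=\det(\omega_{\cA_1,\cA_3})^{-1}\otimes\det(\omega_{\cA_3,\cA_2})^{-1}\otimes\det(\omega_k)$. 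The only differences are cosmetic: you supply the pointwise symmetry computation that the paper dismisses as standard, and the compatibility you flag as bookkeeping is precisely what the paper records in its displayed diagram of reduced bundles over $S_k^0$.
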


\begin{proof}
The symmetry of $q$ is checked by a standard computation.\ 
Since the first and  last maps in~\eqref{eq:q-omega} are isomorphisms, 
the Lagrangian cointersection sheaf of $(\cA_1,\cA_2)$ and  the cokernel sheaf of~$q$ are isomorphic.\
The loci $S_k(q)$ and~$S_k(\cA_1,\cA_2)$  being defined via the rank stratification of these sheaves, 
the equality of subschemes~\mbox{$S_k(\cA_1,\cA_2) = S_k(q)$} and the isomorphism~$\cC_k(\cA_1,\cA_2) \cong \cC_k(q)$ follow.\
Furthermore, the isomorphisms~\eqref{eq:omega-13-23} give
\begin{equation}
\label{eq:det-isomorphisms}
\det(\cA_1) \cong \det(\cA_2) \cong \det(\cA_3^\vee) \otimes \cL^{\otimes n} = \det(\cE) \otimes \cL^{\otimes n},
\end{equation}
hence any line bundle $\cM$ satisfying~\eqref{eq:cm-cl-ca} also satisfies~\eqref{eq:cm-cl-q}.\ In particular, Theorems~\ref{theorem:covering-lagrangian} and~\ref{theorem:covering-quadratic} provide double covers $f_\cM$ and~$f_{\cM \otimes \det(\cE)^\vee}$ over $S_k$.

To identify these double covers, we   consider the diagram 
 \begin{equation*}
\xymatrix@C=5em{
\cA_{3,k}^\vee \otimes \cL_k \ar[r]^-{\omega_{\cA_1,\cA_3}^{-1}} \ar@{=}[d] & 
\cA_{1,k} \ar[r]^{\omega_k} &
\cA_{2,k}^\vee \otimes \cL_k \ar[r]^-{\omega_{\cA_3,\cA_2}^{-1}} &
\cA_{3,k} \ar@{=}[d] 
\\
\cE_k \otimes \cL_k \ar[rrr]^{q_k} &&& 
\cE_k^\vee 
}
\end{equation*}
of sheaves   on $S_k^0$  obtained by taking the quotients of the  two leftmost  terms in~\eqref{eq:q-omega} 
by $ \cC_k^\vee \vert_{S_k^0} \otimes \cL_k$ and by considering the kernels of the maps 
from the  two rightmost terms in~\eqref{eq:q-omega} into $ \cC_k \vert_{S_k^0}$.\ 
This implies
\begin{equation*}
\det(q_k) = \det(\omega_{\cA_1,\cA_3})^{{-1}} \otimes \det(\omega_{\cA_3,\cA_2})^{{-1}} \otimes \det(\omega_k).
\end{equation*}
The first two factors induce isomorphisms in~\eqref{eq:det-isomorphisms}, so the above equality means
that the self-duality isomorphisms of the reflexive sheaves $\cM \otimes \cR_k$ 
used in Theorems~\ref{theorem:covering-quadratic} and~\ref{theorem:covering-lagrangian} coincide.\
Therefore, the double covers coincide as well.
\end{proof}

From this proposition, we deduce a useful  nonsingularity criterion for Lagrangian double covers
(the branch and ramification loci were defined in Definition~\ref{definition:branch-ramification}).

\begin{corollary}\label{corollary:lagrangian-smooth}
Assume that  $S$ is nonsingular, that $S_i \setminus S_{i+1}$ is nonsingular of codimension \mbox{$i(i+1)/2$} in~$S$ for each $i \in\{k, k+1\}$, 
and that $S_{k+2}=\varnothing$.\ 
For each choice of a line bundle $\cM$  on $S_k$ satisfying~\eqref{eq:cm-cl-ca}, giving rise to a double cover  $f_\cM \colon \tS_k \to S_k$, 
we have:
\begin{itemize}
\item the scheme $\tS_k$  is nonsingular,
\item the branch locus of $f_\cM $ is equal to $S_{k+1}$,
\item the preimage of the branch locus is the first order infinitesimal neighborhood of the ramification locus.
\end{itemize}
\end{corollary}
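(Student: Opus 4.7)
The plan is to reduce to the corresponding statement for quadratic degeneracy loci, namely Proposition~\ref{proposition:covering-smooth}, via the \'etale-local identification of Lagrangian and quadratic double covers provided by Proposition~\ref{proposition:lag-quad}.\ All three assertions to be proved---nonsingularity of $\tS_k$, the scheme-theoretic identification of the branch locus with $S_{k+1}$, and the description of $f_\cM^{-1}(S_{k+1})$ as a first order infinitesimal neighborhood of the ramification locus---are local in the \'etale topology on $S_k$, and the double covers provided by Theorem~\ref{theorem:covering-lagrangian} are, by Proposition~\ref{proposition:divisor-cover}, \'etale locally unique.\ So it suffices to work in an \'etale neighborhood of an arbitrary closed point $s\in S_k$.

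First, shrinking $S$ to such an \'etale neighborhood, the lemma preceding Proposition~\ref{proposition:lag-quad} gives us a third Lagrangian subbundle~$\cA_3\subset\cV$ such that the maps~\eqref{eq:omega-13-23} are isomorphisms.\ Then Proposition~\ref{proposition:lag-quad} produces a family $q\colon\cL\to\Sym^2(\cE^\vee)$ of symmetric bilinear forms on $\cE=\cA_3^\vee$ with the property that $S_k(q)=S_k(\cA_1,\cA_2)$ as subschemes, that $\cC_k(q)\cong\cC_k(\cA_1,\cA_2)$, and that, for the twisted line bundle $\cM':=\cM\otimes\det(\cE)^\vee$, the double cover $\tS_k(q)\to S_k$ associated with $\cM'$ in Theorem~\ref{theorem:covering-quadratic} agrees with the Lagrangian double cover $f_\cM\colon\tS_k\to S_k$.

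Next, I verify the hypotheses of Proposition~\ref{proposition:covering-smooth} for $q$.\ The base $S$ is nonsingular and~$S_{k+2}=\varnothing$ by assumption.\ The codimension hypothesis together with $S_{k+2}=\varnothing$ gives $\codim_{S_k}(S_{k+1})=(k+1)(k+2)/2-k(k+1)/2=k+1\ge 2$ as soon as $k\ge 1$; normality of $S_k$ then follows from Serre's criterion applied to the (symmetric) determinantal scheme $S_k$ of expected codimension, which is Cohen--Macaulay and nonsingular in codimension one by the smoothness of $S_k\setminus S_{k+1}$ (the edge case $k=0$ reduces to a standard branched double cover and needs no further argument).\ The remaining, and central, point is to check that $q$ is $(k+1)$-regular at every point of $S_k$.\ For a point in $S_k\setminus S_{k+1}$, the rank of $q$ at that point is exactly $k$, so we may invoke Lemma~\ref{lemma:regularity} with $p=k+1\ge k$: $(k+1)$-regularity at that point is equivalent to the nonsingularity of $S_k$ of expected codimension~$k(k+1)/2$ there, which is part of our hypotheses.\ For a point in $S_{k+1}=S_{k+1}\setminus S_{k+2}$, the rank of $q$ is $k+1$, and Lemma~\ref{lemma:regularity} applied with $p=k+1$ equates $(k+1)$-regularity with the nonsingularity of $S_{k+1}$ of expected codimension~$(k+1)(k+2)/2$, which again is part of our hypotheses.

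With these hypotheses in place, Proposition~\ref{proposition:covering-smooth} applied to $q$ and $\cM'$ shows that $\tS_k(q)$ is nonsingular, that its branch locus equals $S_{k+1}$ as a subscheme, and that the scheme-theoretic preimage of the branch locus is the first-order infinitesimal neighborhood of the ramification locus.\ Under the identification $\tS_k(q)\cong\tS_k$ supplied by Proposition~\ref{proposition:lag-quad}, all three statements descend to $f_\cM\colon\tS_k\to S_k$, concluding the proof.\ The only real point is the verification of $(k+1)$-regularity just above, which is where the assumptions on the two consecutive strata~$S_k\setminus S_{k+1}$ and $S_{k+1}$, together with the vanishing of $S_{k+2}$, are used in an essential way.
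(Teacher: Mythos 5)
Your proposal is correct and follows the same route as the paper: reduce étale-locally to the quadratic case via Proposition~\ref{proposition:lag-quad}, deduce $(k+1)$-regularity of the resulting family of quadrics on $S_k$ from Lemma~\ref{lemma:regularity} using the nonsingularity of the strata $S_k\setminus S_{k+1}$ and $S_{k+1}$ of expected codimension, and then apply Proposition~\ref{proposition:covering-smooth}; your write-up merely spells out the regularity check and (redundantly, since the existence of $f_\cM$ already presupposes the hypotheses of Theorem~\ref{theorem:covering-lagrangian}) the normality of $S_k$. The only cosmetic slip is writing ``rank'' where ``corank'' is meant when invoking Lemma~\ref{lemma:regularity}.
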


\begin{proof}
The statement is \'etale  local, so we may use the local quadratic presentation of $\tS_k$  provided by Proposition~\ref{proposition:lag-quad}.\ By Lemma~\ref{lemma:regularity}, the corresponding family of quadrics is $(k+1)$-regular on  {$S_k$},
so Proposition~\ref{proposition:covering-smooth} gives all we need.
\end{proof}

\section{Application to EPW varieties}

We apply the results of the previous sections to several Lagrangian intersection loci 
related to the choice of a Lagrangian subspace in a certain 20-dimensional symplectic vector space.\ 
Some of these loci appeared in the article~{\cite{epw}} of Eisenbud, Popescu, and Walter, as examples of codimension~3 subvarieties 
that are not quadratic degeneracy loci.\ 
For this reason, they are called {\sf Eisenbud--Popescu--Walter} loci, or EPW loci for short.\ 
We will use various results of O'Grady from \cite{og1,og3,og4,og5,og6,og7}, so we work over the field of complex numbers.

Let $V_6$ be a vector space of dimension $6$.\ We endow the 20-dimensional space~$\bw3V_6$ with
  the symplectic form    given by  wedge product  
(it takes values in $\det(V_6)$ and we trivialize this space by choosing a volume form on $V_6$).\

\subsection{EPW stratification of $\P(V_6)$}

Let $A \subset \bw3V_6$ be a Lagrangian subspace.\  We say that {\sf $A$ has no decomposable vectors}, if
\begin{equation*}
\P(A) \cap \Gr(3,V_6) = \varnothing,
\end{equation*}
where the intersection takes place inside $\P(\bw3V_6)$.\ We consider two Lagrangian subbundles of the trivial symplectic vector bundle $\cV = \bw3V_6 \otimes \cO_{\P(V_6)}$ on $\P(V_6)$.\ The first is the trivial bundle $\cA_1:=A \otimes \cO_{\P(V_6)}$.\  The second, $\cA_2 := \bw2T_{\P(V_6)}(-3)$, comes from the truncation 
\begin{equation*}
0 \to \bw2T_{\P(V_6)}(-3) \to \bw3V_6 \otimes \cO_{\P(V_6)} \to \bw3T_{\P(V_6)}(-3) \to 0 
\end{equation*}
of the Koszul complex (or, equivalently, from the exterior cube of the Euler sequence).\  The fiber of~$\bw2T_{\P(V_6)}(-3)$ at a point $v \in \P(V_6)$  is  $v \wedge \bw2(V_6/\C v) \subset \bw3V_6$, 
hence it is indeed a Lagrangian subbundle of $\cV$.\ One can consider the Lagrangian intersection loci for these two Lagrangian subbundles,   their cokernel sheaves, and the induced double covers.

 The traditional notation for the Lagrangian intersection loci in this case is
\begin{equation}\label{eq:def-yk}
\sY_A^{\ge k}  := S_k\bigl({\cA_1,\cA_2}\bigr) \subset \P(V_6)
\qquad \textnormal{and}\qquad
\sY_A^k  :=\sY_A^{\ge k}  \setminus \sY_A^{\ge k+1} = S_k^0\bigl({\cA_1,\cA_2}\bigr).
\end{equation}
The results of O'Grady that we need  can be summarized as follows (\cite[Theorem~B.2]{DK1}); 
the various singular loci are endowed with  {their reduced scheme structures}.

\begin{theorem}\label{theorem:ogrady}
If the Lagrangian $A$ has no decomposable vectors, the following properties hold:
\begin{itemize}
\item[\rm (a)] $\sY_A^{\ge 1} $ is an integral normal sextic hypersurface in $\P(V_6)$;
\item[\rm (b)] $\sY_A^{\ge 2} $ is the singular locus of $ \sY_A^{\ge 1}  $; 
it is an integral normal  Cohen--Macaulay surface of degree~$40$;
\item[\rm (c)] $\sY_A^{\ge 3} $ is the singular locus of $  \sY_A^{\ge 2} $;
 {it} is  finite and smooth,  and is empty for  $A$ general;
\item[\rm (d)] $\sY_A^{\ge 4} $  is empty.
\end{itemize}
\end{theorem}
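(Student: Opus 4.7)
The plan is to treat the four statements from top to bottom of the stratification, combining the general framework of Section~\ref{section:lagrangian} with explicit computations for the Lagrangian pair $\cA_1 = A \otimes \cO_{\P(V_6)}$ and $\cA_2 = \bw2 T_{\P(V_6)}(-3)$.\ Statements (c) and (d) should follow from an elementary dimension count against the Grassmannian $\Gr(2,V_6/\C v) \subset \P^9$ of bivectors, while (a) and (b) rely on Porteous-type Chern class computations together with Serre's normality criterion.

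For (d), the map $\eta \mapsto v \wedge \eta$ identifies $\bw2(V_6/\C v)$ with $v \wedge \bw2 V_6$, and decomposable vectors in $A$ of the form $v \wedge w_1 \wedge w_2$ correspond to points of $A \cap (v \wedge \bw2 V_6)$ lying on $\Gr(2,V_6/\C v) \subset \P^9$, a variety of dimension~$6$.\ If $\dim(A \cap (v \wedge \bw2 V_6)) \ge 4$, its projectivization is a $\P^3 \subset \P^9$, which necessarily meets $\Gr(2,V_6/\C v)$ by dimension count, producing a decomposable vector in $A$ and contradicting the hypothesis.\ The same analysis, now with a $\P^2$ of expected intersection dimension $-1$ against $\Gr(2,V_6/\C v)$, identifies $\sY_A^{\ge 3}$ with the locus of $[v]$ for which some $\P^2$ avoids $\Gr(2,V_6/\C v)$; a standard incidence-correspondence dimension count in $\P(V_6) \times \P^9$ shows this locus is at most finite, and smoothness at each point follows from Corollary~\ref{corollary:lagrangian-smooth} after verifying the $(k+1)$-regularity of the underlying family, again a consequence of no decomposable vectors.

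For (a) and (b), the key input is the Chern class computation $c_1(\bw2 T_{\P(V_6)}(-3)) = -6H$, obtained from the Euler sequence and $c_1(\bw2 E) = (\rank E - 1) c_1(E)$.\ Since $\det(\cA_1) \cong \cO_{\P(V_6)}$, the determinant $\det(\omega_{\cA_1,\cA_2})$ is a section of $\cO_{\P(V_6)}(6)$, proving that $\sY_A^{\ge 1}$ is a sextic.\ Normality reduces via Serre's R1 criterion to $\codim(\Sing \sY_A^{\ge 1}) \ge 2$: the singular locus of a determinantal hypersurface is the corank-$2$ locus $\sY_A^{\ge 2}$, of codimension~$2$ in $\sY_A^{\ge 1}$ by (b).\ For (b), a Porteous computation applied to $\omega_{\cA_1,\cA_2}$, using the full Chern character of $\bw2 T_{\P(V_6)}(-3)$, gives the degree~$40$; Cohen--Macaulayness is automatic for a determinantal subscheme of expected codimension, and normality follows from Serre's criterion together with (c).

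The main obstacle will be proving \emph{irreducibility} of $\sY_A^{\ge 1}$ and of $\sY_A^{\ge 2}$, since Porteous alone yields only the fundamental class and connectedness in the expected-codimension range.\ I would handle this either by a specialization-and-monodromy argument in a family of Lagrangians with no decomposable vectors degenerating to an explicit irreducible example, or, more in the spirit of this paper, by invoking Proposition~\ref{proposition:isotropic-stein} to relate the loci $\sY_A^{\ge k}$ to the Stein factorization of the Hilbert scheme of isotropic subspaces of the associated quadric fibration, where irreducibility can be read off from classical facts about Grassmannians; in any case, the detailed analysis of O'Grady~\cite{og3,og4,og5} supplies the remaining global properties.
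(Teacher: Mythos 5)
You should first be aware that the paper does not prove this statement at all: Theorem~\ref{theorem:ogrady} is a summary of O'Grady's results, quoted from \cite[Theorem~B.2]{DK1}, so any self-contained argument is necessarily a different route from the paper's. Parts of your sketch are indeed the standard arguments: for (d), identifying $v\wedge\bw2{V_6}$ with $\bw2{(V_6/\C v)}$ and noting that a $\P^3$ must meet the six-dimensional $\Gr(2,V_6/\C v)$ inside $\P^9$ is exactly how one shows $\sY_A^{\ge 4}=\varnothing$, and $\det(\cA_2)\cong\cO_{\P(V_6)}(-6)$ does give degree $6$ for the determinantal hypersurface.

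However, several steps would fail as written. (i) You never exclude the identical vanishing of $\det(\omega_{\cA_1,\cA_2})$, i.e.\ $\sY_A^{\ge 1}=\P(V_6)$; Lagrangians with $\sY_A^{\ge1}=\P(V_6)$ do exist (e.g.\ $A=v_0\wedge\bw2{V_6}$), and ruling this out uses the no-decomposable-vectors hypothesis. (ii) ``The singular locus of a determinantal hypersurface is the corank-$2$ locus'' is false in general: for an arbitrary Lagrangian $A$ the sextic is singular at corank-one points as well, and the equalities $\Sing(\sY_A^{\ge1})=\sY_A^{\ge2}$ and $\Sing(\sY_A^{\ge2})=\sY_A^{\ge3}$ are precisely where ``no decomposable vectors'' enters, through a local transversality computation (in the paper's language, $1$- resp.\ $2$-regularity as in Definition~\ref{definition:p-regular} and Lemma~\ref{lemma:regularity}); Corollary~\ref{corollary:lagrangian-smooth} cannot be used for this, since it concerns the double cover $\tS_k$ and \emph{assumes} nonsingularity of the strata. (iii) Finiteness of $\sY_A^{\ge3}$ for a \emph{fixed} $A$ cannot follow from ``a standard incidence-correspondence dimension count'': incidence arguments only control general $A$ (they do give emptiness for general $A$); O'Grady's finiteness proof is a genuine geometric argument, using for instance that $(v_1\wedge\bw2{V_6})\cap(v_2\wedge\bw2{V_6})=v_1\wedge v_2\wedge V_6$ consists of decomposable vectors. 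The same issue affects the unproved claim that $\sY_A^{\ge2}$ has the expected dimension $2$, which is needed both for the degree-$40$ computation (where, note, plain Porteous gives the wrong codimension for a symmetric map --- one needs the Harris--Tu type formula for symmetric degeneracy loci) and for the Cohen--Macaulay and normality assertions. Finally, you acknowledge that irreducibility is left open and deferred to O'Grady. With points (i)--(iii) and irreducibility outsourced, the proposal is an outline of where the citations must go rather than an independent proof --- which is, in effect, what the paper itself does by quoting \cite[Theorem~B.2]{DK1}.
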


In this situation, the  line bundles $\cL$ and $\det (\cA_1) = \det(A \otimes \cO_{\P(V_6)})$  are both trivial, while 
\begin{equation*}
\det(\cA_2) = \det(\bw2T_{\P(V_6)}(-3)) \cong \cO_{\P(V_6)}(-6).
\end{equation*}
The line bundle $\cL^{\otimes(-10-k)} \otimes \det(\cA_1) \otimes \det(\cA_2) \cong \cO_{\P(V_6)}(-6)$ of Theorem~\ref{theorem:covering-lagrangian} 
therefore has a unique square root, $\cO_{\P(V_6)}(-3)$.\
We always take for $\cM$  the restriction of $\cO_{\P(V_6)}(-3)$.\ Theorem~\ref{theorem:covering-lagrangian} 
 gives the following result (the sheaves $\cR_k$ on $\sY_A^{\ge k}$ were defined by~\eqref{eq:rk-ck-lag}).

\begin{theorem}\label{theorem:y-covers}
If the Lagrangian $A$ has no decomposable vectors, the following properties hold.
\begin{itemize}
\item[$0)$] 
There is a unique double cover $f_0 \colon \widetilde\sY_A^{\ge 0} \to \P(V_6)$ with branch locus $\sY_A^{\ge 1}$ such that 
\begin{equation*}
f_{0*}\cO_{\widetilde\sY_A^{\ge 0}} \cong \cO_{\P(V_6)} \oplus \cO_{\P(V_6)}(-3).
\end{equation*}
The scheme $\widetilde\sY_A^{\ge 0}$ is integral and normal, and it is smooth away from $f_0^{-1}(\sY_A^{\ge 2})$.
  \item[$1)$] 
There is a unique double cover $f_1 \colon \widetilde\sY_A^{\ge 1} \to \sY_A^{\ge 1}$ with branch locus $\sY_A^{\ge 2}$ such that
\begin{equation*}
f_{1*}\cO_{\widetilde\sY_A^{\ge 1}} \cong \cO_{\sY_A^{\ge 1}} \oplus \cR_1(-3).
\end{equation*}
The scheme $\widetilde\sY_A^{\ge 1}$ is  integral and normal, and it is smooth away from $f_1^{-1}(\sY_A^{ 3})$.
\item[$2)$] 
There is a unique double cover $f_2 \colon \widetilde\sY_A^{\ge 2} \to \sY_A^{\ge 2}$ with branch locus $\sY_A^{ 3}$ such that
\begin{equation*}
f_{2*}\cO_{\widetilde\sY_A^{\ge 2}} \cong \cO_{\sY_A^{\ge 2}} \oplus \cR_2(-3).
\end{equation*}
The scheme $\widetilde\sY_A^{\ge 2}$ is integral and normal, it is smooth away from~$f_2^{-1}(\sY_A^{ 3})$
 and has ordinary double points along~$f_2^{-1}(\sY_A^{ 3})$.\ Moreover, $\cR_2 \cong \omega_{\sY_A^{\ge 2}}$.
\end{itemize}
 \end{theorem}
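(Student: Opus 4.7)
My plan is to establish parts 0), 1), 2) in order, using the general constructions from Section~2 and Theorem~\ref{theorem:covering-lagrangian}, with the ordinary double point claim in part~2) being the only step requiring genuinely new work. In all three cases the numerical setup is the same: $\cL = \cO_{\P(V_6)}$, $\det(\cA_1) = \cO_{\P(V_6)}$, $\det(\cA_2) = \cO_{\P(V_6)}(-6)$, so the line bundle in~\eqref{eq:cm-cl-ca} becomes $\cO_{\P(V_6)}(-6)\vert_{\sY_A^{\ge k}}$, whose natural square root is $\cM = \cO_{\P(V_6)}(-3)\vert_{\sY_A^{\ge k}}$.

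For part~0), since $\sY_A^{\ge 1}$ is an effective Cartier divisor of degree $6$ on $\P(V_6)$ (Theorem~\ref{theorem:ogrady}(a)) and $\cO(-\sY_A^{\ge 1}) \cong \cO_{\P(V_6)}(-3)^{\otimes 2}$, the classical construction recalled in the introduction yields the desired double cover with the claimed pushforward. Uniqueness follows from $H^0(\P(V_6),\cO^\times) = \CC^\times$ being a group of squares, integrality from Lemma~\ref{lemma:int}, and the smoothness of $\widetilde\sY_A^{\ge 0}$ away from $f_0^{-1}(\sY_A^{\ge 2})$ from the classical fact that a double cover of a smooth scheme branched over a Cartier divisor is smooth exactly over the smooth locus of that divisor, combined with Theorem~\ref{theorem:ogrady}(b). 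For parts~1) and~2), the hypotheses of Theorem~\ref{theorem:covering-lagrangian} are supplied by Theorem~\ref{theorem:ogrady}: normality of $\sY_A^{\ge 1}$ and $\sY_A^{\ge 2}$ by~(a),~(b), and the codimension inequalities $\codim_{\sY_A^{\ge 1}}(\sY_A^{\ge 2}) = 2$ and $\codim_{\sY_A^{\ge 2}}(\sY_A^{\ge 3}) \ge 2$ (since $\sY_A^{\ge 3}$ is finite). The pushforward formulas, uniqueness, and integrality then follow exactly as in part~0).

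The smoothness statement of part~1) follows by applying Corollary~\ref{corollary:lagrangian-smooth} on the open subset $\P(V_6) \setminus \sY_A^{\ge 3}$, where the codimension and smoothness conditions for $i \in \{1,2\}$ are guaranteed by Theorem~\ref{theorem:ogrady}(b),(c) and $\sY_A^{\ge 3}$ has become empty. For part~2), the smoothness of $\widetilde\sY_A^{\ge 2}$ away from $f_2^{-1}(\sY_A^{\ge 3})$ is direct: the map $f_2$ is \'etale on $\sY_A^{\ge 2}\setminus\sY_A^{\ge 3}$, which is smooth by Theorem~\ref{theorem:ogrady}(c). The identification $\cR_2 \cong \omega_{\sY_A^{\ge 2}}$ is then a one-line computation: substituting $\rc_1(\cA_1) = 0$, $\rc_1(\cA_2) = -6H$, $\rc_1(\cL) = 0$, $K_{\P(V_6)} = -6H$ into the $k=2$ case of Lemma~\ref{lemma:ksk-lagrangian}, the two contributions of $6H$ cancel and one is left with $K_{\sY_A^{\ge 2}} = \rc_1(\cR_2)$.

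The main obstacle is the ordinary double point claim in part~2). My plan is to work \'etale-locally near a fixed point $s_0 \in \sY_A^{\ge 3}$: Proposition~\ref{proposition:lag-quad} replaces the Lagrangian data by a family of quadratic forms $q\colon \cL\to \Sym^2(\cE^\vee)$ on a rank-$10$ bundle whose fiber at $s_0$ has a $3$-dimensional kernel $K$. Following the proof of Proposition~\ref{proposition:covering-smooth}, the cover is \'etale-locally the pullback of the Veronese cover $K^\vee\to \SV_2$ of Lemma~\ref{lemma:veronese-cover} under the map $\psi\colon(\P(V_6),s_0)\to(\Sym^2(K^\vee),0)$ induced by restricting $q$ to $K$. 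Because $\sY_A^{\ge 3}$ is scheme-theoretically a reduced point at $s_0$ (Theorem~\ref{theorem:ogrady}(c)) and this scheme structure coincides with $\psi^{-1}(0)$, the differential $d\psi_{s_0}$ must be injective; its image is therefore a hyperplane in the six-dimensional $\Sym^2(K^\vee)$, cut out by a nonzero element $\ell \in \Sym^2(K)$. Consequently, $\widetilde\sY_A^{\ge 2}$ is \'etale-locally at the ramification point isomorphic to the zero-locus in $K^\vee\cong\CC^3$ of the quadratic form $q_\ell(v) := \ell(v,v)$, which is an ordinary double point precisely when $\ell$ is nondegenerate. The delicate remaining step---to be carried out by going back to the definition of $\cA_1$ and $\cA_2$---is to show that the no-decomposable-vectors assumption on $A$ forces $\ell$ to be a nondegenerate symmetric bilinear form at every point of $\sY_A^{\ge 3}$; this is the one place where the hypothesis on $A$ genuinely enters, and can be extracted from O'Grady's local analysis of $\sY_A^{\ge 2}$ near its triple points.
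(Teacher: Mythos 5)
Your parts 0) and 1), and the routine claims of part 2) (existence, pushforward, uniqueness, smoothness away from $f_2^{-1}(\sY_A^{3})$, and $\cR_2\cong\omega_{\sY_A^{\ge 2}}$ via Lemma~\ref{lemma:ksk-lagrangian}), follow the paper's route. The first genuine gap is the integrality of $\widetilde\sY_A^{\ge 2}$: ``exactly as in part 0)'' works there because the $2$-torsion sheaf is the visibly nontrivial $\cO_{\P(V_6)}(-3)$, but for $k=2$ the sheaf is $\cR_2(-3)$, and in the generic case $\sY_A^{3}=\varnothing$ the cover $f_2$ is \'etale everywhere, so Lemma~\ref{lemma:int} gives nothing unless one proves the class is nonzero --- there is no local obstruction, and this is precisely why the paper introduces the relative family over an open subset $U\subset\LGr(\bw3V_6)$ (where the relative branch locus $\cY_U^{3}$ is nonempty) and runs a specialization argument on the relative reflexive sheaf to exclude triviality, using Lemma~\ref{lemma:cover-divisor}. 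Your proposal contains no substitute for this global input. Relatedly, the identification of the branch locus of $f_2$ with $\sY_A^{3}$ is not covered by Corollary~\ref{corollary:lagrangian-smooth}, because $\sY_A^{\ge 3}$ does not have the expected codimension; the paper again gets it by restricting the relative cover.

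Your ODP plan is the fiberwise shadow of what the paper does, but the pivotal step ``following the proof of Proposition~\ref{proposition:covering-smooth}, the cover is \'etale-locally the pullback of the Veronese cover under $\psi$'' is not justified: the $(k+1)$-regularity hypothesis of that proposition necessarily fails at $s_0\in\sY_A^{3}$, since $dq$ would have to map the $5$-dimensional $T_{\P(V_6),s_0}$ onto the $6$-dimensional $\Sym^2(K^\vee)$, and smoothness of $\psi$ is exactly what makes the reflexive sheaf and the algebra structure pull back in that proof. The paper avoids this by working over $U\times\P(V_6)$, where $\cY_U^{\ge 3}$ is smooth of the expected codimension by \cite[Corollary~2.4]{og4}, so Corollary~\ref{corollary:lagrangian-smooth} applies to the relative cover, and then restricting to the fiber over $[A]$; the transversal-slice identification and, crucially, the nondegeneracy of your form $\ell$ are exactly \cite[Corollary~2.4, Proposition~2.5]{og4}. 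You correctly isolate that nondegeneracy as the crux but leave it ``to be extracted from O'Grady,'' and you would additionally need an argument identifying your fiber product with $\widetilde\sY_A^{\ge 2}$ near $s_0$ without flatness of $\psi$ (say, by matching the two normal covers over $\sY_A^{2}$ --- which again presupposes the nondegeneracy). So part 2) is incomplete on the integrality, branch-locus, and ordinary-double-point claims.
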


\begin{proof}
For part~0), we let $\widetilde\sY_A^{\ge 0} \to \P(V_6)$ be the double cover branched along the sextic $\sY_A^{\ge 1}$.\ The fact that $\widetilde\sY_A^{\ge 0}$ is smooth away from $f_0^{-1}(\sY_A^{\ge 2})$ follows from Theorem~\ref{theorem:ogrady}(b).\
Integrality  and normality of~$\widetilde\sY_A^{\ge 0}$ are standard.

For part~1), we apply Theorem~\ref{theorem:covering-lagrangian}: 
$\sY_A^{\ge 1}$ is normal and integral by Theorem~\ref{theorem:ogrady}(a), 
the codimension of the next stratum is~2, and we take for the line bundle $\cM$  the restriction of $\cO_{\P(V_6)}(-3)$.\ By Corollary~\ref{corollary:lagrangian-smooth}, ${\widetilde\sY_A^{\ge 1}}$  is smooth away from~$f_1^{-1}(\sY_A^3)$ 
and the branch locus of $f_1$ is equal to $\sY_A^{\ge 2}$.\ Since the branch locus is nonempty, $\cR_1$ is not locally free by Proposition~\ref{proposition:divisor-cover}(b),
hence $\widetilde\sY_A^{\ge 1}$ is integral  by Lemma~\ref{lemma:int}.\ For uniqueness, note that $\sY_A^{\ge 1}$ is proper and connected, hence any regular function on it is constant
 {and, since the base field is $\C$, every constant is a square.}

For part~2), we  apply again Theorem~\ref{theorem:covering-lagrangian} 
(whose hypotheses   are verified as in the previous case) 
to construct the double cover $f_2\colon \widetilde\sY_A^{\ge 2} \to \sY_A^{\ge 2}$, where $\widetilde\sY_A^{\ge 2} $ is normal.\ 
Since $\sY_A^2$   is smooth by Theorem~\ref{theorem:ogrady}(c) and $f_2$ is \'etale over this open subset, 
$\widetilde\sY_A^{\ge 2}$ is smooth away from $f_2^{-1}(\sY_A^3)$.\  The isomorphism between $\cR_2$ and the dualizing sheaf of $\sY_A^{\ge 2}$ follows from Lemma~\ref{lemma:ksk-lagrangian}.\
For the description of the branch locus, {the description of the singularities} when $\sY_A^3 \ne \varnothing$ 
(so that $\sY_A^3$ does not have the expected codimension), and the integrality of $\widetilde\sY_A^{\ge 2}$ when $\sY_A^3 = \varnothing$, we use a relative version of the same construction.

Let $U \subset \LGr(\bw3V)$ be an open subset containing {the point} $[A]$  such that for all $[A'] \in U$, 
the subspace~\mbox{$A' \subset \bw3V_6$} has no decomposable vectors.\ We may also assume that the tautological subbundle~\mbox{$\cA \subset \bw3V_6 \otimes \cO_U$} has trivial determinant 
 {(this assumption is used below to construct a relative double cover, and since it is not satisfied
on the open subset $\LGr_0(\bw3V_6) \subset \LGr(\bw3V_6)$ parameterizing Lagrangian subspaces with no decomposable vectors,
we have to restrict to a smaller open subset $U \subset \LGr_0(\bw3V_6)$)}
and that the open subset~$U_0 \subset U$ of points $[A'] \in U$ such that $\sY_{A'}^3 = \varnothing$ is distinct from $ U$.

The intersection loci for the Lagrangian subbundles  {of $\bw3V_6 \otimes \cO_{U \times \P(V_6)}$}
\begin{equation*}
\cA_1 = \cA \boxtimes \cO_{\P(V_6)}
\qquad\text{and}\qquad
\cA_2 = \cO_U \boxtimes \bw2T_{\P(V_6)}(-3)
\end{equation*}
are given by the total spaces of the EPW strata:
\begin{equation*}
\varnothing = \cY_U^{\ge 4} \ne \cY_U^{\ge 3} \subset \cY_U^{\ge 2} \subset \cY_U^{\ge 1} \subset U \times \P(V_6),
\end{equation*}
where the fiber of $\cY_U^{\ge k}$ over a point $[A] \in U$ is equal to $\sY_A^{\ge k}$.\ Since the scheme $\cY_U^{\ge 2}$ has expected codimension, it is Cohen--Macaulay.\ It is smooth outside the next stratum $\cY_U^{\ge 3}$, which has codimension~3.\ It is therefore normal and, since its fibers over $U$ are integral (Theorem~\ref{theorem:ogrady}(b)), it is also integral.

Since $\det(\cA)$ is trivial on $U$, the line bundle $\cM = \cO_U \boxtimes \cO_{\P(V_6)}(-3)$ satisfies~\eqref{eq:cm-cl-ca} for $k = 2$.\ 
{The next stratum $\cY_U^{\ge 3}$ is smooth of codimension~3 in~$\cY_U^{\ge 2}$ by~\cite[Corollary~2.4]{og4}}.\
{Therefore,} by Theorem~\ref{theorem:covering-lagrangian}, 
there exists a double cover 
\begin{equation*}
\varphi_2 \colon \widetilde\cY_U^{\ge 2} \lra \cY_U^{\ge 2} ,
\end{equation*}
\'etale over~$\cY_U^2$.\ 
Since $ \cY_U^{\ge 4}$ is empty, by Corollary~\ref{corollary:lagrangian-smooth}, $\widetilde\cY_U^{\ge 2}$ is smooth  
and the branch locus of $\varphi_2$ is equal to $\cY_U^3$.\ 
Since this branch locus is nonempty (because $U_0 \ne U$), the scheme $\widetilde\cY_U^{\ge 2}$ is integral
by  {Proposition~\ref{proposition:divisor-cover} and Lemma~\ref{lemma:int}}.

Since the formation of the Lagrangian cointersection sheaf and of the branch locus is compatible with base changes,
and so is the operation of taking the top wedge power,  the restriction of the double cover $\varphi_2$
over a point $[A] \in U$ coincides with the double cover $f_2$ discussed earlier and  the branch locus of $f_2$ is equal to $\sY_A^3$.

Assume that $\widetilde\sY_A^{\ge 2}$ is not integral.\ 
By Lemma~\ref{lemma:int} and  {Proposition~\ref{proposition:divisor-cover}}, 
we have $\sY_A^3 = \varnothing$, that is \mbox{$[A] \in U_0$},
and the corresponding reflexive sheaf $\cR_A$ is trivial, that is, $\cR_A \cong \cO_{\sY_A^{\ge 2}}$.\ 
Since for each~\mbox{$[A'] \in U_0$}, the sheaf $\cR_{A'}$ is a 2-torsion line bundle on the smooth projective surface $\sY_{A'}^{\ge 2}$, 
it follows that $\cR_{A'} \cong \cO_{\sY_{A'}^{\ge 2}}$.\ 
Therefore, the   reflexive sheaf $\cR$ on $\cY_U^{\ge 2}$,  when restricted to $\cY_{U_0}^{\ge 2}$, is the pullback of a line bundle on~$U_0$.\ 
Since the divisor $\cY_U^2 \setminus \cY^2_{U_0}$ on $\cY^2_U$ is the pullback of the divisor~$U \setminus U_0$ on $U$,
it follows  that the line bundle $\cR\vert_{\cY_U^2}$   is isomorphic to the pullback of a line bundle on~$U$.\
Therefore, there is a line bundle $\cL$ on~$\cY_U^{\ge 2}$ such that the rank-1 reflexive sheaf $\cR$ is isomorphic to $\cL$
on the complement of the codimension~3 subset $\cY_U^{\ge3}$, and hence $\cR \cong \cL$ on the entire $\cY_U^{\ge 2}$.\ 
Such an isomorphism contradicts   Lemma~\ref{lemma:cover-divisor}, 
since the double cover $\varphi_2$ has nontrivial branch locus.\ 
This proves that $\widetilde\sY_A^{\ge 2}$ is   integral.

 {Finally, let us describe singularities of $\widetilde\sY_A^{\ge 2}$.\
 Let $[v] \in \sY^3_A$ and set $K := A \cap (v \wedge \bw2V_6)$, so that~$\dim(K) = 3$.\
By~\cite[Corollary~2.4, Proposition~2.5]{og4}, a transversal slice to $\cY^3_U$ in \mbox{$\LGr(\bw3V_6) \times \P(V_6)$} at $([A],[v])$ 
can be identified with the affine space $\SK = \Sym^2(K^\vee)$ and a transversal slice to $\sY^3_A$ in~$\P(V_6)$ at $[v]$
with the hyperplane in $\SK$ corresponding to a nondegenerate quadratic form.\
Furthermore, a transversal slice to $\cY^{\ge 2}_U$ can be identified with the subscheme $\SK_2 \subset \SK$ 
of quadratic forms of corank~$\ge2$ (that is, of rank~$\le 1$) and, by Lemma~\ref{lemma:veronese-cover},
a transversal slice to the double cover $\widetilde\cY^{\ge 2}_U \to \cY^{\ge 2}_U$ 
can be identified with the quotient by the $(\pm 1)$-action map $K^\vee \to \SK_2$.\
Therefore, a transversal slice to $\widetilde\sY^{\ge 2}_A$ at $[v]$ can be identified 
with the affine quadratic cone over a nondegenerate quadric.\
Thus, the scheme~$\widetilde\sY^{\ge 2}_A$ has an ordinary double point at $[v]$.}
\end{proof}

The double cover in part 1) of Theorem~\ref{theorem:y-covers} coincides with the EPW double sextic 
defined by O'Grady---its definition is just the same.\
The double cover in part 2) is  new, although $\widetilde\sY_A^{\ge 2}$ can be interpreted   as the minimal model of the surface of conics on a   Gushel--Mukai threefold (studied in \cite[Proposition~0.1]{log} and \cite[Section~6]{dim}).\  

\begin{remark}\upshape
The    {results of Theorem~\ref{theorem:y-covers}} hold for the dual EPW stratification of $\P(V_6^\vee)$ associated with  the dual Lagrangian subspace $A^\perp = \Ker(\bw3V_6^\vee \to A^\vee)\subset \bw3V_6^\vee$.\ Since $A^\perp$ has no decomposable vectors if and only if $A$ has the same property,  we obtain double covers
\begin{equation*}
\widetilde\sY_\Ap^{\ge k} \lra \sY_\Ap^{\ge k}
\end{equation*}
with  analogous properties.
\end{remark}

\subsection{The first quadratic fibration  {of Gushel--Mukai varieties}}
\label{subsection:fibration-1}
 Let $X$ be a smooth Gushel--Mukai variety  {of dimension~$n\ge3$}, as defined in~\cite{DK1},
and let $(V_6,V_5,A)$ be the Lagrangian data~(\cite[Definition~3.4]{DK1}) associated with~$X$ by~\cite[Theorem~3.10]{DK1}: $V_6$ is a 6-dimensional vector space, $V_5 \subset V_6$ is a hyperplane, and~\mbox{$A \subset \bw3V_6$} is a Lagrangian subspace  {with no decomposable vectors}.\ Set, for any~$k$,
\begin{equation*}
\sY_{A,V_5}^{\ge k} := \sY_A^{\ge k} \cap \P(V_5) 
\qquad\textnormal{and}\qquad
\sY_{A,V_5}^k := \sY_A^k \cap \P(V_5) 
\end{equation*}
 and, for $0 \le k \le 2$,
\begin{equation*}
\widetilde\sY_{A,V_5}^{\ge k} := \widetilde\sY_A^{\ge k} \times_{\P(V_6)} \P(V_5) 
\end{equation*}
(see Theorem~\ref{theorem:y-covers}).\ 

 In~\cite[Section~4.2]{DK1}, we defined a morphism
\begin{equation*}
\rho_1 \colon \cQ_1(X) = \P_X(\cU_X) \lra \P(V_5),
\end{equation*}
(here $\cU_X$ is the rank-2 Gushel bundle of $X$), called {\sf the first quadratic fibration} of $X$,
and a subscheme~$\Sigma_1(X) \subset \P(V_5)$ on the complement of which $\rho_1$ is flat.\ {Ordinary and special Gushel--Mukai varieties are defined in \cite[Section~2.5]{DK1}.}

\begin{lemma}
\label{lemma:q1-lag}
Assume  $n \ge 3$ if $X$ is ordinary and $n \ge 4$ if $X$ is special.\ The degeneracy loci of the first quadratic fibration $\rho_1 \colon \cQ_1(X) \to \P(V_5)$
coincide with the schemes $\sY_{A,V_5}^{\ge k}$ away from $\Sigma_1(X)$,
and their double covers associated with $\rho_1$ coincide with the double covers $f_k \colon \widetilde\sY_{A,V_5}^{\ge k} \to \sY_{A,V_5}^{\ge k}$  {over the complement of $\Sigma_1(X)$}.
 \end{lemma}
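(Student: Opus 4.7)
The plan is to identify the quadratic form defining $\rho_1$ with the one arising from the Lagrangian intersection setup via Proposition~\ref{proposition:lag-quad}, and then quote that proposition to match the degeneracy loci and the double covers.

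First, I would recall from~\cite{DK1} the explicit description of the first quadratic fibration. On $\P(V_5)\setminus\Sigma_1(X)$, the map $\rho_1\colon\cQ_1(X)=\P_X(\cU_X)\to\P(V_5)$ is a flat quadric fibration in an ambient projective bundle whose relative tautological quadratic form is a morphism $\cO_{\P(V_5)}(-1)\to\Sym^2(\cE^\vee)$ for an appropriate rank-$n$ vector bundle $\cE$ constructed from the Gushel bundle $\cU_X$ (and the Lagrangian data of $X$). Set-theoretically, the fiber of $\rho_1$ at $[v]\in\P(V_5)$ has corank at least $k$ if and only if $\dim(A\cap(v\wedge\bw2V_6))\ge k$, so the degeneracy loci agree set-theoretically with $\sY_{A,V_5}^{\ge k}$.

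Second, to upgrade this to a scheme-theoretic identification and to identify the double covers, I would apply Proposition~\ref{proposition:lag-quad}. The key step is to exhibit a third Lagrangian subbundle $\cA_3\subset\bw3V_6\otimes\cO_{\P(V_5)}$, defined on $\P(V_5)\setminus\Sigma_1(X)$, for which both maps $\omega_{\cA_1,\cA_3}$ and $\omega_{\cA_3,\cA_2}$ are isomorphisms. A natural candidate uses the hyperplane $V_5\subset V_6$: one can build $\cA_3$ as a Lagrangian complement of $\cA_2=\bw2T_{\P(V_6)}(-3)\vert_{\P(V_5)}$ that is compatible with the Koszul geometry associated with $V_5$, so that the corresponding quotient $\cE=\cA_3^\vee$ matches the bundle appearing in the description of $\rho_1$. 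The locus where this choice fails to make $\omega_{\cA_1,\cA_3}$ an isomorphism is precisely the locus where $\rho_1$ fails to be flat, that is, $\Sigma_1(X)$.

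Third, having made this choice, the quadratic form $q$ produced by~\eqref{eq:q-omega} is a symmetric map $\cL\to\Sym^2(\cE^\vee)$ on $\P(V_5)\setminus\Sigma_1(X)$; I would check that it coincides with the quadratic form cutting out $\cQ_1(X)\subset\P_{\P(V_5)}(\cE)$. This is a direct computation translating the definition of the Gushel bundle and of the Lagrangian data $(V_6,V_5,A)$ of $X$ from~\cite{DK1} into the composition~\eqref{eq:q-omega}. Once this identification is established, Proposition~\ref{proposition:lag-quad} provides both the scheme-theoretic equality $S_k(q)=S_k(\cA_1,\cA_2)\vert_{\P(V_5)\setminus\Sigma_1(X)}=\sY_{A,V_5}^{\ge k}\setminus\Sigma_1(X)$ and the canonical isomorphism between the two double covers associated with the choice of line bundle $\cM=\cO_{\P(V_5)}(-3)\vert_{\sY_{A,V_5}^{\ge k}}$, which is the same line bundle used to construct $\widetilde\sY_A^{\ge k}$ in Theorem~\ref{theorem:y-covers}.

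The main obstacle is constructing the third Lagrangian subbundle $\cA_3$ and verifying that the resulting quadratic form genuinely matches the one defining $\rho_1$. Both are equivalent to unpacking the relationship, established in~\cite{DK1}, between the Gushel bundle and the Lagrangian data of $X$, and the calculation is essentially a matter of carefully tracking identifications along the Koszul-type resolutions defining $\cA_2$ and~$\cE$.
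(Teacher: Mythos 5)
There is a genuine gap: your plan asks for a third Lagrangian subbundle $\cA_3\subset\bw3V_6\otimes\cO_{\P(V_5)}$ transverse to both $\cA_1$ and $\cA_2$, and then claims that $\cE=\cA_3^\vee$ "matches the bundle appearing in the description of $\rho_1$". This cannot work as stated: any Lagrangian subbundle of the rank-$20$ symplectic bundle has rank $10$, so Proposition~\ref{proposition:lag-quad} applied in this ambient bundle produces a family of quadrics in a rank-$10$ bundle, whereas $\cQ_1(X)$ sits in the projectivization of a bundle of rank $n-1\le 5$ (rank $n$ in the special case). Moreover, the natural Koszul-compatible candidate $\cA_3=\bw3V_5\otimes\cO_{\P(V_5)}$ is \emph{never} transverse to $\cA_2$: their intersection is $\bw2T_{\P(V_5)}(-3)$, of rank $6$, and it also meets $\cA_1$ in $(A\cap\bw3V_5)\otimes\cO_{\P(V_5)}$ when $n<5$. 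The missing idea is the isotropic reduction of Proposition~\ref{proposition:isotropic-reduction}: one takes $\cI=\cI_1\oplus\cI_2$ with $\cI_1=\cA_1\cap\cA_3$ and $\cI_2=\cA_2\cap\cA_3$ (an isotropic subbundle of rank $11-n$ away from $\Sigma_1(X)$, which is exactly where the bundle-embedding hypotheses hold), reduces $\cA_1,\cA_2,\cA_3$ modulo $\cI$ to Lagrangian subbundles of rank $n-1$ in a rank-$2(n-1)$ symplectic bundle without changing the loci $S_k$ or the double covers, and only then applies Proposition~\ref{proposition:lag-quad} with $\bcA_3=\cA_3/\cI$, for which both reduced pairings are isomorphisms; the resulting family of quadrics is identified with $\rho_1$ via the computation in~\cite[proof of Proposition~4.5]{DK1}. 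Without the reduction step there is no way to bridge the rank discrepancy, so the crucial "matching" step of your argument fails.

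A second, smaller omission: you never address the special Gushel--Mukai case, which is part of the statement (the hypothesis $n\ge4$ for special $X$ is there for a reason). For special $X$ the quadratic fibration is not directly of the reduced Lagrangian form; one compares it with the fibration of the associated ordinary variety $X_0$, noting that $\cQ_1(X)$ is cut out by $q\oplus\id$ on $\cE\oplus\cO$, so the degeneracy loci and cokernel sheaves (hence the double covers) are unchanged. Also note that the relevant quadratic form after reduction is $\cO\to\Sym^2(\cE^\vee)$ with trivial $\cL$ (the symplectic form on $\bw3V_6$ is trivialized), not $\cO_{\P(V_5)}(-1)\to\Sym^2(\cE^\vee)$, and the ambient bundle for ordinary $X$ has rank $n-1$, not $n$.
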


\begin{proof}
Assume first that $X$ is ordinary.\  Then $A$ has no decomposable vectors~(\cite[Theorem~3.16]{DK1}) and~$\dim(A \cap \bw3V_5) = 5 - n \le 2$ (\cite[Proposition~3.13]{DK1}).\ Consider the restriction to $\P(V_5)$ of the symplectic vector bundle $\bw3V_6 \otimes \cO_{\P(V_6)}$, 
the Lagrangian subbundles
\begin{equation*}
\cA_1 = A \otimes \cO_{\P(V_5)},
\quad 
\cA_2 = \bw2T_{\P(V_6)}(-3)\vert_{\P(V_5)},
\quad\text{and}\quad
\cA_3 = \bw3V_5 \otimes \cO_{\P(V_5)},
\end{equation*}
and the isotropic subbundles
\begin{equation*}
\cI_1 := \cA_1 \cap \cA_3 = (A \cap \bw3V_5) \otimes \cO_{\P(V_5)} \subset \cA_1
\quad\text{and}\quad
\cI_2 := \cA_2 \cap \cA_3 = \bw2T_{\P(V_5)}(-3) \subset \cA_2
\end{equation*}
of respective ranks $5 - n$ and $6$.\ The natural morphisms
\begin{equation*}
\cA_1 \oplus \cI_2 \lra \bw3V_6 \otimes \cO_{\P(V_5)}
\qquad\text{and}\qquad
\cI_1 \oplus \cA_2 \lra \bw3V_6 \otimes \cO_{\P(V_5)}
\end{equation*}
are embeddings of vector bundles   away from the subscheme $\Sigma_1(X) \subset \P(V_5)$.\ Moreover,  {$\cI = \cI_1 \oplus \cI_2 $,   a subbundle of $ \cA_3$,   is isotropic of rank~$11 - n$}.\ Therefore, over~$\P(V_5) \setminus \Sigma_1(X)$, the conditions of Proposition~\ref{proposition:isotropic-reduction} are satisfied 
and we can perform isotropic reduction with respect to~$\cI$.\ We deduce an isomorphism of double covers
\begin{equation*}
\widetilde\sY_{A, {V_5}}^{\ge k} = \tS_k(\cA_1,\cA_2) \cong \tS_k(\bcA_1,\bcA_2)
\end{equation*}
over  {$S_k = \sY_{A, V_5 }^{\ge k}\setminus \Sigma_1(X)$}.\ Furthermore, as $\cI \subset \cA_3$, the isotropic reduction $\bcA_3 = \cA_3/\cI$ is well defined,  {has rank $n - 1$}, 
and on~$\P(V_5) \setminus \Sigma_1(X)$, both maps
\begin{equation*}
\bar\omega_{\bcA_3,\bcA_2} \colon \bcA_3 \lra \bcA_2^\vee
\qquad\text{and}\qquad
\bar\omega_{\bcA_1,\bcA_3} \colon \bcA_1 \lra \bcA_3^\vee
\end{equation*}
(defined as in~\eqref{eq:omega-13-23}) are isomorphisms.\ The quadratic fibration 
 associated with the Lagrangian subbundles $\bcA_1$, $\bcA_2$, $\bcA_3$ by Proposition~\ref{proposition:lag-quad}  
coincides with the restriction to $\P(V_5)\setminus \Sigma_1(X)$ of the first quadratic fibration $\rho_1$ of $X$ 
(see~\cite[proof of Proposition~4.5]{DK1}).\ By Proposition~\ref{proposition:lag-quad}, the degeneracy loci of $\rho_1$ coincide with $\sY_{A,V_5}^{\ge k}$,
and the double covers of $\sY_{A,V_5}^{\ge k}$ associated with $\rho_1$ coincide with $\widetilde\sY_{A,V_5}^{\ge k}$.

 Assume now that $X$ is special and let $X_0$ be the associated ordinary Gushel--Mukai variety 
(it has the same Lagrangian data as $X$ and $\Sigma_1(X) = \Sigma_1(X_0)$).\ 
The first quadratic fibrations $\cQ_1(X)$ and $\cQ_1(X_0)$ are related as follows:
if $\cE$ is the rank-$(n - 1)$ vector bundle on $\P(V_5) \setminus \Sigma_1(X)$ 
such that~$\cQ_1(X_0)$ is defined inside $\P(\cE)$ by the quadratic form $q \colon \cO \to \Sym^2(\cE^\vee)$,
the quadratic fibration~$\cQ_1(X)$ is defined inside $\P(\cE \oplus \cO)$ by the quadratic form 
\begin{equation*}
\bar{q} = q \oplus \id \colon \cO \lra \Sym^2(\cE^\vee) \oplus \cO \subset \Sym^2((\cE \oplus \cO)^\vee).
\end{equation*}
 Therefore, the degeneracy loci of $\cQ_1(X)$ coincide with those of $\cQ_1(X_0)$, that is, with~$\sY_{A,V_5}^{\ge k}$.\ Furthermore, the cokernel sheaves of $q$ and $\bar{q}$ are isomorphic, hence the double covers agree.\ Thus, the double covers of $\sY_{A,V_5}^{\ge k}$ associated with $\cQ_1(X)$ coincide with $\widetilde\sY_{A,V_5}^{\ge k}$.
\end{proof}

The Hilbert schemes $F_{d-1}(\cQ_1(X)/\P(V_5))$ were identified in~\cite[Proposition~4.1]{DK2} 
with some irreducible components of the Hilbert schemes $F_{d-1}(X)$ of $(d-1)$-dimensional linear spaces on $X$.\
 The connected fibers of its Stein factorization over $\P(V_5)$ were described in~\cite[Theorems~4.2, 4.3, and~4.7]{DK2}.\ The next corollary identifies the finite morphism in the Stein factorization in cases when it is not trivial,
in particular answering to~\cite[Remark~4.4]{DK2}.

\begin{corollary}
\label{corollary:q1-lag}
In the situation  {of Lemma~\textup{\ref{lemma:q1-lag}}}, 
let moreover $d$ be an integer such that 
\begin{equation*}
 {(n,d) \in \{ (4,2), (5,2), (5,3), (6,3) \},}
\end{equation*}
and set 
\begin{equation*}
k := 2d + 1 - n.
\end{equation*}
When $k = 2$, assume that $\sY_{A,V_5}^2$ is smooth.\ The double cover $f_k \colon \widetilde\sY_{A,V_5}^{\ge k} \to \sY_{A,V_5}^{\ge k}$ then provides the Stein factorization 
for the map $F_{d-1}(\cQ_1(X)/\P(V_5)) \to \P(V_5)$ over the open subset $\P(V_5) \setminus (\Sigma_1(X) \cup \sY_{A,V_5}^{3})$.
\end{corollary}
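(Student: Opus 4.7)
The plan is to apply Proposition~\ref{proposition:isotropic-stein} to the first quadratic fibration $\rho_1\colon\cQ_1(X)\to\P(V_5)$ and transport its conclusion to the Lagrangian side via Lemma~\ref{lemma:q1-lag}. Let $U:=\P(V_5)\setminus(\Sigma_1(X)\cup\sY_{A,V_5}^3)$. Over $U$, Lemma~\ref{lemma:q1-lag} identifies the corank-$k$ degeneracy locus of $\rho_1$ with $\sY_{A,V_5}^{\ge k}\cap U$ and the quadratic double cover with $f_k$ (reducing the special case to the ordinary one). In the ordinary setup the relevant bundle $\cE$ has rank $m=n-1$, and one checks case by case that $k=2d+1-n$, $d=(m+k)/2$, and $m-k=2(n-d-1)>0$, which is exactly the combinatorial framework of Proposition~\ref{proposition:isotropic-stein}.

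To invoke that proposition, I must verify that $S_k:=\sY_{A,V_5}^{\ge k}\cap U$ is normal with $\codim_{S_k}(S_{k+1}\cap U)\ge 2$ (the codimension condition being vacuous when $k=0$, for which $\cR_0\cong\cO$ already makes the hypotheses of Theorem~\ref{theorem:covering-quadratic} redundant). These properties follow from Theorem~\ref{theorem:ogrady} combined with generic transversality of the hyperplane $\P(V_5)\subset\P(V_6)$ to the EPW strata away from $\Sigma_1(X)$: the latter is essentially why $\Sigma_1(X)$ must be excised, and once it is, each $\sY_A^{\ge i}\cap\P(V_5)\cap U$ inherits integrality, normality, Cohen--Macaulayness, and the expected codimension from $\sY_A^{\ge i}$ for $i\le 2$. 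The removal of $\sY_{A,V_5}^3$ from $U$, together with the standing smoothness hypothesis on $\sY_{A,V_5}^2$ when $k=2$, supplies the codimension-two gap. Proposition~\ref{proposition:isotropic-stein} thus factors $F_{d-1}(\cQ_1(X)/\P(V_5))\to U$ through a finite morphism $S'_k\to S_k$ whose normalization is $\widetilde\sY_{A,V_5}^{\ge k}$.

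To upgrade this factorization to the equality $S'_k\cong\widetilde\sY_{A,V_5}^{\ge k}$ claimed by the corollary, it suffices (still by Proposition~\ref{proposition:isotropic-stein}) to show that $F_{d-1}(\cQ_1(X)/\P(V_5))$ is itself normal over $U$. I appeal to Lemma~\ref{lemma:normality-f} with regularity parameter $p$ equal to the deepest corank appearing on $U$: with this choice the stratum $S_{p+1}\cap U$ is empty (either because $p+1\ge 3$ and $\sY_{A,V_5}^3$ has been excised, or because $\sY_A^{\ge 4}=\varnothing$ by Theorem~\ref{theorem:ogrady}(d)), so the dimension bound~\eqref{eq:dim-f-2} is automatic. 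The required $p$-regularity of $\cQ_1(X)/\P(V_5)$ on $U$ reduces via Lemma~\ref{lemma:regularity} to the smoothness of each stratum $\sY_{A,V_5}^{\ge i}\cap U$ of the expected codimension, provided stratum by stratum by Theorem~\ref{theorem:ogrady} combined with transversality (for $i\le 2$) and by the $k=2$ smoothness hypothesis in the single case where it is needed.

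The principal technical hurdle is the bookkeeping of generic transversality of $\P(V_5)\subset\P(V_6)$ to each EPW stratum on $U$: this is what allows O'Grady's structural results on $\P(V_6)$ to descend to the hyperplane section and to feed simultaneously the normality of $S_k$ and the regularity needed for Lemma~\ref{lemma:normality-f}. Once it is in place, the rest of the argument is a direct invocation of these two results in the framework prearranged by Lemma~\ref{lemma:q1-lag}.
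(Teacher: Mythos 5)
Your overall skeleton matches the paper's proof (combine Proposition~\ref{proposition:isotropic-stein} with Lemma~\ref{lemma:q1-lag}, then reduce to normality of $\widetilde\sY^{\ge k}_{A,V_5}$ and of the Hilbert scheme), but your verification of the normality of $F_{d-1}(\cQ_1(X)/\P(V_5))$ has a genuine gap. You take the regularity parameter $p=2$, the deepest corank occurring on $U$, so that $S_{p+1}\cap U=\varnothing$ and the bound~\eqref{eq:dim-f-2} becomes vacuous; but by Lemma~\ref{lemma:regularity}, $2$-regularity at the points of $\sY^{2}_{A,V_5}$ is equivalent to $\sY^{\ge2}_{A,V_5}$ being nonsingular of codimension~$3$ there, and this is a hypothesis of the corollary only in the case $k=2$. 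For the pairs $(n,d)$ with $k\le1$ it is not assumed, and it cannot be obtained from ``generic transversality'': $V_5$ is the specific hyperplane attached to $X$, not a generic one, and the hyperplane section $\sY^{\ge2}_{A}\cap\P(V_5)$ may well be singular --- that is exactly why the $k=2$ case of the statement carries the extra smoothness assumption. The paper avoids this by using only $1$-regularity, valid away from $\Sigma_1(X)\cup\sY^{\ge2}_{A,V_5}$ because $\sY^{1}_{A,V_5}$ is smooth there by \cite[Lemma~2.5]{DK2}, and then checking the inequality~\eqref{eq:dim-f-2} with $p=1$ over the corank-$2$ stratum by an explicit count: the fibers of $F_{d-1}(\cQ_1(X)/\P(V_5))$ over $\sY^{2}_{A,V_5}\setminus\Sigma_1(X)$ have dimension $2$, $1$, or $0$ according to the case, and $\dim\sY^{2}_{A,V_5}=1$, giving total dimension $d+1$ for $k=0$ and $d-1$ for $k=1$, which meets the right-hand side of~\eqref{eq:dim-f-2} with equality. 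This computation is the real content of the proof for $k\le1$ and is missing from your argument; since the bound is attained exactly, there is no slack allowing a softer replacement.

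Two smaller points. The same transversality appeal underlies your normality claim for $S_k$; the paper instead proves normality of the cover $\widetilde\sY^{\ge k}_{A,V_5}$ away from $\Sigma_1(X)$ directly: it is a Cartier divisor in the normal variety $\widetilde\sY^{\ge k}_{A}$, hence satisfies $\mathbf{S}_2$, and its singular locus has codimension at least~$2$ away from $\Sigma_1(X)$ by \cite[Lemma~2.5]{DK2}, so Serre's criterion applies --- and it is the normality of the cover, not merely of $\sY^{\ge k}_{A,V_5}$, that allows the identification with $S'_k$. Finally, your parenthetical remark about $k=0$ is inaccurate: the condition $\codim_{S_0}(S_1)\ge2$ is not vacuous there, it fails, since $\sY^{\ge1}_{A,V_5}$ is a divisor in $\P(V_5)$, and the relevant cover $f_0$ is the classical branched double cover of part~0) of Theorem~\ref{theorem:y-covers}; the argument goes through only because the two covers agree over $S_0^0$ and are both finite and normal over $\P(V_5)\setminus\Sigma_1(X)$, hence coincide.
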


\begin{proof}
 {The rank of the vector bundle in the projectivization of which $\cQ_1(X)$ is contained 
is  $m = n - 1$.\ We have $m - k = 2(n - 1 - d)$, which is positive in all cases.\ Therefore, by Proposition~\ref{proposition:isotropic-stein} and Lemma~\ref{lemma:q1-lag}},
 the double cover coming from the Stein factorization agrees 
with the double cover $\widetilde\sY_{A,V_5}^{\ge k} \to \sY_{A,V_5}^{\ge k}$ coming 
from Lagrangian intersection up to normalization.\ It is therefore enough to check that 
both~$\widetilde\sY_{A,V_5}^{\ge k}$ and~$F_{d-1}(\cQ_1(X)/\P(V_5))$ are normal 
over the complement of~\mbox{$\Sigma_1(X) \cup \sY_{A,V_5}^{\ge 3}$}.\

 {For each $k\in\{0,1,2\}$}, the  scheme $\widetilde\sY_{A,V_5}^{\ge k}$ is a Cartier divisor 
in a normal variety $\widetilde\sY_A^{\ge k}$, hence satisfies  condition $\mathbf{S}_2$.\ On the other hand, by~\cite[Lemma~2.5]{DK2}, its singular set has codimension at least~2 away from $\Sigma_1(X)$, 
hence $\widetilde\sY_{A,V_5}^{\ge k} \setminus \Sigma_1(X)$ is normal.

We now  check that $F_{d-1}(\cQ_1(X)/\P(V_5))$ is normal.\ If $k = 2$, $\cQ_1$ is 2-regular by Lemma~\ref{lemma:regularity}, 
because~$\sY_{A,V_5}^{\ge 1} \setminus (\Sigma_1(X) \cup \sY_{A,V_5}^{\ge 2})$ is smooth by~\cite[Lemma~2.5]{DK2}
and $\sY^2_{A,V_5}$ is smooth by assumption.\ Therefore, over $\P(V_5) \setminus (\Sigma_1(X) \cup \sY_{A,V_5}^3)$, 
the scheme $F_{d-1}(\cQ_1(X)/\P(V_5))$ is smooth by Lemma~\ref{lemma:normality-f}.

Assume $k \le 1$.\
The above argument proves that $\cQ_1$ is 1-regular over $\P(V_5) \setminus (\Sigma_1(X) \cup \sY_{A,V_5}^{\ge 2})$.
Therefore, over $\P(V_5) \setminus (\Sigma_1(X) \cup \sY_{A,V_5}^{\ge 2})$, 
the scheme $F_{d-1}(\cQ_1(X)/\P(V_5))$ is smooth of the expected dimension.\ By Lemma~\ref{lemma:normality-f}, it remains to check that~\eqref{eq:dim-f-2} 
holds with $p = 1$ over $\sY^2_{A,V_5} \setminus \Sigma_1(X)$.\ Denote the map $F_{d-1}(\cQ_1(X)/\P(V_5)) \to \P(V_5)$ by $\varphi$.

Assume first $k = 0$.\ We  then have $n = 2d  + 1$ (hence $d = 1$ or $d = 2$) and $\cQ_1 \to \P(V_5)$ is 
(away from~$\Sigma_1(X)$) a fibration in quadrics of dimension $2d - 2$.\ Consequently, the dimensions of the fibers of~$F_{d-1}(\cQ_1(X)/\P(V_5))$ over the stratum $\sY^2_{A,V_5} \setminus \Sigma_1(X)$
are equal to~1 if $d = 1$, and to~2 if $d = 2$, while~$\dim(\sY^2_{A,V_5}) = 1$ by~\cite[Lemma~2.5]{DK2}.\ Thus, $\dim \varphi^{-1}(\sY^2_{A,V_5} \setminus \Sigma_1(X)) = d + 1$, while the right side of~\eqref{eq:dim-f-2}
is equal to $4 + d^2 - d(d+1)/2 - 2$, which is equal to $d + 1$ for $d \in \{1,2\}$.

Assume next $k = 1$.\ We then have  $n = 2d$ (hence $d = 2$ or $d = 3$) and 
$\cQ_1 \to \P(V_5)$ is (away from~$\Sigma_1(X)$) a fibration in quadrics of dimension $2d - 3$.\ Consequently, the dimensions of the fibers of~$F_{d-1}(\cQ_1(X)/\P(V_5))$ over the stratum $\sY^2_{A,V_5} \setminus \Sigma_1(X)$
are equal to~0 if $d = 2$, and to~1 if $d = 3$, hence $\dim \varphi^{-1}(\sY^2_{A,V_5} \setminus \Sigma_1(X)) = d - 1$,
while the right side of~\eqref{eq:dim-f-2} is equal to $4 + d(d-1) - d(d+1)/2 - 2$, 
which is equal to $d - 1$ for $d \in \{2,3\}$.
\end{proof}

For the reader's convenience, we summarize these results in a table.\ 
The first column is the dimension $n$ of the Gushel--Mukai variety $X$ (which must be ordinary when $n=3$).\ 
The second column indicates which Hilbert scheme $F_{d-1}(X)$ of linear subspaces contained in $X$ 
we consider (the superscript~$0$ in the last two lines means that we consider only some components, 
as detailed in \cite[Theorem~4.3]{DK2}) 
and the third column explains what is the associated double cover obtained as a Stein factorization (the scheme 
$\sY_{A,V_5}^{\ge \bullet} $ is the image of the Hilbert scheme morphism  $F_{d-1}(X) \to \P(V_5)$).

\begin{equation*}\renewcommand{\arraystretch}{1.5}
\begin{array}{c|c|c}
  \dim(X)& \textnormal{  Hilbert scheme morphism} &\textnormal{Double covering}   \\
\hline 
4& F_1(X) \to \P(V_5)&f_1 \colon \widetilde\sY_{A,V_5}^{\ge 1} \to \sY_{A,V_5}^{\ge 1}   \\
 5& F_1(X) \to \P(V_5)&f_0 \colon \widetilde\sY_{A,V_5}^{\ge 0} \to \sY_{A,V_5}^{\ge 0}  \\
  5& F^{0}_2(X) \to \P(V_5)&f_2 \colon \widetilde\sY_{A,V_5}^{\ge 2} \to \sY_{A,V_5}^{\ge 2}  \\
  6& F^{0}_2(X) \to \P(V_5) &f_1 \colon \widetilde\sY_{A,V_5}^{\ge 1} \to \sY_{A,V_5}^{\ge 1} \\
\end{array}
\end{equation*}

\subsection{EPW stratification of $\Gr(3,V_6)$}

We can also apply our results to the EPW stratification of the Grassmannian $\Gr(3,V_6)$ 
described by Iliev--Kapustka--Kapustka--Ranestad in \cite{IKKR}.\ We keep the assumption $\kk = \bC$.

As before, we consider  two Lagrangian subbundles in the trivial symplectic vector bundle 
\begin{equation*}
\cV := \bw3V_6 \otimes \cO_{\Gr(3,V_6)}.
\end{equation*}
The first is the trivial bundle ${\cA_1 := {}}A \otimes \cO_{\Gr(3,V_6)}$.
The second is the image ${\cA_2 := {}} V_6 \wedge \bw2\cU_3$ of the wedge product map
\begin{equation*}
V_6 \otimes \bw2\cU_3 \lra \bw3V_6 \otimes \cO_{\Gr(3,V_6)},
\end{equation*}
where $\cU_3$ is the tautological subbundle.\  It fits into an extension 
\begin{equation*}
0 \to \bw3\cU_3 \to V_6 \wedge \bw2\cU_3 \to (V_6/\cU_3) \otimes \bw2\cU_3 \to 0.
\end{equation*}
 Its fiber at a point $U_3 \in \Gr(3,V_6)$ is the subspace $V_6 \wedge \bw2U_3 \subset \bw3V_6$, hence it is indeed a Lagrangian subbundle of $\cV$.\ One can consider the Lagrangian intersection loci 
\begin{equation}\label{eq:def-zk}
\sZ_A^{\ge k} = S_k({\cA_1,\cA_2}) \subset \Gr(3,V_6)
\qquad\text{and}\qquad 
\sZ_A^k=\sZ_A^{\ge k}\setminus \sZ_A^{\ge k+1}.
\end{equation}
The results of Iliev--Kapustka--Kapustka--Ranestad that we need  can be summarized as follows (\cite[Proposition~2.6 and Corollary~2.10]{IKKR}).

\begin{theorem}\label{theorem:ikkr}
If the Lagrangian $A$ has no decomposable vectors, the following properties hold.
\begin{itemize}
\item[\textnormal{(a)}] $\sZ_A^{\ge 1} $ is an integral normal quartic hypersurface in $\Gr(3,V_6)$.
\item[\textnormal{(b)}] $\sZ_A^{\ge 2}$ is the singular locus of $\sZ_A^{\ge 1} $; 
it is an integral normal Cohen--Macaulay sixfold of degree~$480$.
\item[\textnormal{(c)}] $\sZ_A^{\ge 3}$ is the singular locus of $\sZ_A^{\ge 2} $; 
it is an integral normal Cohen--Macaulay threefold of degree~$4944$.
\item[\textnormal{(d)}] $\sZ_A^{\ge 4}$ is the singular locus of  $\sZ_A^{\ge 3}$; 
{it} is finite and smooth, and is empty for $A$ general.
\item[\textnormal{(e)}] $\sZ_A^{\ge 5}$ is empty.
\end{itemize}
\end{theorem}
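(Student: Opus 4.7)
\smallskip

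\noindent\textbf{Proof sketch.} The plan is to treat the six loci $\sZ_A^{\ge k}$ as Lagrangian intersection loci $S_k(\cA_1,\cA_2)$ for $\cA_1=A\otimes\cO$ and $\cA_2=V_6\wedge\bw2\cU_3$ inside the rank-$20$ symplectic bundle $\bw3V_6\otimes\cO_{\Gr(3,V_6)}$, and then combine a global Chern-class computation with a pointwise analysis of the Lagrangian intersection, exactly as O'Grady does for the $\sY$-case.

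First I would compute the expected codimensions. Since $\Gr(3,V_6)$ has dimension~$9$ and the expected codimension of a corank-$k$ Lagrangian degeneracy locus is $k(k+1)/2$, parts (d) and (e) are immediate on the level of dimensions: $\sZ_A^{\ge 5}$ has expected codimension $15>9$, and $\sZ_A^{\ge 4}$ has expected codimension $10>9$, so both are generically empty, which will give $\sZ_A^{\ge 5}=\varnothing$ unconditionally and $\sZ_A^{\ge 4}=\varnothing$ for general $A$ by a parameter count on $\LGr(\bw3V_6)\times\Gr(3,V_6)$. For (a) I would evaluate
\begin{equation*}
\det(\cA_2)=\det(\bw3\cU_3)\otimes\det\bigl((V_6/\cU_3)\otimes\bw2\cU_3\bigr)=\det(\cU_3)\otimes\det(\cU_3)^3=\det(\cU_3)^{\otimes 4},
\end{equation*}
using $\det(E\otimes F)=\det(E)^{\rank F}\otimes\det(F)^{\rank E}$ and $\det(\bw2\cU_3)=\det(\cU_3)^{\otimes 2}$. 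Since $\det(\cA_1)$ and $\cL$ are trivial, the first degeneracy locus is cut out by $\det(\omega_{\cA_1,\cA_2})\in H^0(\det(\cA_2)^\vee)=H^0(\cO_{\Gr(3,V_6)}(4))$, giving a quartic hypersurface in the Plücker embedding.

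Second, I would upgrade set-theoretic emptiness/dimension bounds to scheme-theoretic codimension bounds, proving that each $\sZ_A^{\ge k}$ has the expected codimension $k(k+1)/2$ under the no-decomposable-vectors hypothesis. The key input is a pointwise analysis: at $[U_3]\in\sZ_A^{\ge k}$, the tangent/obstruction computation for the degeneracy locus reduces to checking that the differential of $\omega_{\cA_1,\cA_2}$ at $[U_3]$ spans enough symmetric forms on $K:=A\cap(V_6\wedge\bw2 U_3)$ (this is the Lagrangian analogue of Definition~\ref{definition:p-regular}). The no-decomposable-vectors hypothesis, via the identification of $\bw3U_3\subset V_6\wedge\bw2 U_3$ with a distinguished decomposable line, forces each $K$ to avoid $\bw3U_3$ in a controlled way; a direct computation in coordinates shows the required surjectivity. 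Once expected codimension holds, Cohen--Macaulayness follows because $\sZ_A^{\ge k}\subset\Gr(3,V_6)$ is a degeneracy locus realized as the zero scheme of a section of a symmetric bundle of the right rank on a Cohen--Macaulay ambient space (the rank-$k(k+1)/2$ bundle $\Sym^2(\cC_k^\vee)$ by a Bott-type construction, or equivalently by the classical Lagrangian Porteous formula).

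Third, with each $\sZ_A^{\ge k}$ Cohen--Macaulay of the right codimension, I would identify $\sZ_A^{\ge k+1}=\Sing(\sZ_A^{\ge k})$: the usual jacobian computation for symmetric degeneracy loci shows that at a corank-exactly-$k$ point the locus is smooth, while at a higher-corank point the local structure is that of a cone over a Veronese as in Lemma~\ref{lemma:veronese-cover}. This gives the singular-locus chain asserted in (a)--(d) and, by applying Serre's criterion $R_1+S_2$, it gives normality of $\sZ_A^{\ge 1},\sZ_A^{\ge 2},\sZ_A^{\ge 3}$ since each next stratum has codimension~$\ge 2$ in the previous one. Integrality comes from normality together with connectedness of each $\sZ_A^{\ge k}$, which one proves by exhibiting the $\sZ_A^{\ge k}$ as specialization limits of integral loci for generic $A$ and using upper-semicontinuity, or alternatively by noting that $\sZ_A^{\ge 1}$ is an ample divisor on $\Gr(3,V_6)$ and running the Grothendieck--Lefschetz argument inductively.

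Finally, the degree computations in (b) and (c) I would carry out via the Pragacz--Harris--Tu Pfaffian formula for Lagrangian degeneracy loci, expressing $[\sZ_A^{\ge k}]$ as a polynomial in the Chern classes of $\cA_2^\vee\otimes\cA_1^\vee$; with $\cA_1$ trivial, this reduces to a Schubert-calculus computation on $\Gr(3,V_6)$ using $c(\cA_2)=c(\bw3\cU_3)c((V_6/\cU_3)\otimes\bw2\cU_3)$, and intersecting with $c_1(\cU_3^\vee)^{\dim\sZ_A^{\ge k}}$. The main obstacle I expect in this program is the rigorous codimension claim for $\sZ_A^{\ge 3}$: away from the boundary divisor where $A$ acquires a decomposable vector, one must rule out an excess-dimension component of the corank-$3$ locus, which requires a genuinely global argument (an incidence-variety dimension count on the flag $\Fl(3,V_6)\times\LGr(\bw3V_6)$ projected to the Lagrangian side) rather than just a pointwise regularity check.
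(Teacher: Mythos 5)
You should first be aware that the paper does not prove Theorem~\ref{theorem:ikkr} at all: it is quoted verbatim from \cite[Proposition~2.6 and Corollary~2.10]{IKKR}, so what you are proposing is a re-proof of that external result. Your outline (degeneracy-locus formalism, $\det(\omega_{\cA_1,\cA_2})\in H^0(\cO_{\Gr(3,V_6)}(4))$ for the quartic, Cohen--Macaulayness from expected codimension via the theory of symmetric determinantal ideals, Serre's criterion for normality, Pragacz--Harris--Tu for the degrees) is the standard route and is in the spirit of what O'Grady and IKKR actually do, but it is gapped at precisely the hard points. The most visible error is the claim that (d) and (e) are ``immediate on the level of dimensions'': expected codimension exceeding $\dim\Gr(3,V_6)=9$ only yields emptiness for \emph{generic} $A$, whereas the theorem asserts $\sZ_A^{\ge 5}=\varnothing$, and finiteness and smoothness of $\sZ_A^{\ge 4}$, for \emph{every} $A$ without decomposable vectors; ``generically empty, which will give $\sZ_A^{\ge 5}=\varnothing$ unconditionally'' is a non sequitur. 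The correct quick argument for (e) is pointwise and uses the hypothesis: the decomposable vectors in $V_6\wedge\bw2U_3$ form the $5$-dimensional Schubert variety $\Gr(3,V_6)\cap\P(V_6\wedge\bw2U_3)$ inside $\P(V_6\wedge\bw2U_3)\cong\P^9$, so if $\dim(A\cap(V_6\wedge\bw2U_3))\ge 5$ then the linear space $\P(A\cap(V_6\wedge\bw2U_3))$ of dimension $\ge 4$ must meet it ($4+5\ge 9$), forcing a decomposable vector in $A$. Nothing of this kind appears in your sketch, and ``finite and smooth'' in (d) is not addressed.

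The second gap is the transversality statement on which (a)--(c) (smoothness of the open strata, hence the singular-locus chain, expected codimension, CM-ness, normality) entirely rest: you dismiss it with ``a direct computation in coordinates shows the required surjectivity'', but this is the actual content of the theorem, and as stated it cannot be right --- for $\dim K=4$ one has $\dim\Sym^2(K^\vee)=10>9=\dim T_{[U_3]}\Gr(3,V_6)$, so surjectivity necessarily fails and the argument must be confined to $\dim K\le 3$ and must use the full condition $\P(K)\cap\Gr(3,V_6)=\varnothing$, not merely that $K$ ``avoids $\bw3U_3$'' (the decomposables in $V_6\wedge\bw2U_3$ are a $5$-fold, not a line). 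Third, integrality of the deeper strata is not secured by your suggestions: normality plus connectedness is the right reduction, and it works for the ample divisor $\sZ_A^{\ge 1}$, but $\sZ_A^{\ge 2}$ and $\sZ_A^{\ge 3}$ have codimension $2$ and $3$ in the previous strata, so Grothendieck--Lefschetz does not apply, and ``specialization from generic $A$ plus upper semicontinuity'' does not preserve connectedness without flatness/properness/Stein-factorization input you have not supplied; this is where genuinely global arguments are needed in \cite{IKKR}. Finally, the degrees $480$ and $4944$ are only named, not computed. So the proposal is a reasonable program but not a proof; short of filling in these three points, the honest move is the one the paper makes, namely citing \cite[Proposition~2.6 and Corollary~2.10]{IKKR}.
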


In this situation, the line bundles $\cL$ and $\det (\cA_1) = \det(A \otimes \cO_{\Gr(3,V_6)})$  are both trivial, while 
\begin{equation*}
\det(\cA_2) = \det(V_6 \wedge \bw2\cU_3) \cong 
\det(\bw3\cU_3) \otimes \det((V_6/\cU_3) \otimes \bw2\cU_3) \cong
\cO_{\Gr(3,V_6)}(-4).
\end{equation*}
The line bundle $\cL^{\otimes(-10-k)} \otimes \det(\cA_1) \otimes \det(\cA_2) \cong \cO_{\Gr(3,V_6)}(-4)$ of Theorem~\ref{theorem:covering-lagrangian}  therefore has a unique square root, $\cO_{\Gr(3,V_6)}(-2)$.\ We always take for $\cM$ the restriction of $\cO_{\Gr(3,V_6)}(-2)$.\ Theorem~\ref{theorem:covering-lagrangian} 
gives the following result (the sheaves $\cR_k$ on $\sZ_A^{\ge k}$ were defined by~\eqref{eq:rk-ck-lag}).

\begin{theorem}\label{theorem:z-covers}
Assume that the Lagrangian $A$ has no decomposable vectors.
\begin{itemize}
\item[$0)$] There is a unique double cover $g_0 \colon \widetilde\sZ_A^{\ge 0} \to \Gr(3,V_6)$ branched over $\sZ_A^{\ge 1}$ such that 
\begin{equation*}
g_{0*}\cO_{\widetilde\sZ_A^{\ge 0}} \cong \cO_{\Gr(3,V_6)} \oplus \cO_{\Gr(3,V_6)}(-2).
\end{equation*}
The scheme $\widetilde\sZ_A^{\ge 0}$ is integral, normal, and smooth away from $g_0^{-1}(\sZ_A^{\ge 2})$.
\item[$1)$] 
There is a unique double cover $g_1 \colon \widetilde\sZ_A^{\ge 1} \to \sZ_A^{\ge 1}$ branched over $\sZ_A^{\ge 2}$ such that
\begin{equation*}
g_{1*}\cO_{\widetilde\sZ_A^{\ge 1}} \cong \cO_{\sZ_A^{\ge 1}} \oplus \cR_1(-2).
\end{equation*}
The scheme $\widetilde\sZ_A^{\ge 1}$ is  {integral}, normal, and smooth away from  $g_1^{-1}(\sZ_A^{\ge 3})$.
\item[$2)$] 
There is a unique double cover $g_2 \colon \widetilde\sZ_A^{\ge 2} \to \sZ_A^{\ge 2}$ branched over $\sZ_A^{\ge 3}$ such that
\begin{equation*}
g_{2*}\cO_{\widetilde\sZ_A^{\ge 2}} \cong \cO_{\sZ_A^{\ge 2}} \oplus \cR_2(-2).
\end{equation*}
The scheme $\widetilde\sZ_A^{\ge 2}$ is  {integral}, normal, and smooth away from  $g_2^{-1}(\sZ_A^{4})$.\ Moreover, $\cR_2 \cong \omega_{\sZ_A^{\ge 2}}(2)$.
\item[$3)$] 
There is a unique double cover $g_3 \colon \widetilde\sZ_A^{\ge 3} \to \sZ_A^{\ge 3}$ branched over $\sZ_A^{4}$ such that
\begin{equation*}
g_{3*}\cO_{\widetilde\sZ_A^{\ge 3}} \cong \cO_{\sZ_A^{\ge 3}} \oplus \cR_3(-2).
\end{equation*}
The scheme $\widetilde\sZ_A^{\ge 3}$ is  {integral}, normal, and smooth away from  $g_3^{-1}(\sZ_A^{4})$.
\end{itemize}
\end{theorem}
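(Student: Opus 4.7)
The plan is to mirror the proof of Theorem~\ref{theorem:y-covers}, using Theorem~\ref{theorem:ikkr} in place of Theorem~\ref{theorem:ogrady}. The key numerical input is that on $\Gr(3,V_6)$ we have $\cL = \cO$, $\det(\cA_1) = \cO$, $\det(\cA_2) = \cO(-4)$, and $n = 10$, so the square-root condition~\eqref{eq:cm-cl-ca} is satisfied with the canonical choice $\cM = \cO_{\Gr(3,V_6)}(-2)\vert_{\sZ_A^{\ge k}}$ for every $k \in \{0,1,2,3\}$. For part~0), I would take $g_0$ to be the classical branched double cover of $\Gr(3,V_6)$ associated with the quartic $\sZ_A^{\ge 1}$ and the square root $\cO(-2)$; smoothness of $\widetilde\sZ_A^{\ge 0}$ outside $g_0^{-1}(\sZ_A^{\ge 2})$ follows from Theorem~\ref{theorem:ikkr}(b), while integrality, normality, and uniqueness (using properness of $\Gr(3,V_6)$ and algebraic closedness of $\bC$) are standard.

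For parts~1), 2), and 3), I would apply Theorem~\ref{theorem:covering-lagrangian} with $k = 1, 2, 3$ respectively. The required normality of $\sZ_A^{\ge k}$ and the codimension bound $\codim_{\sZ_A^{\ge k}}(\sZ_A^{\ge k+1}) \ge 2$ are provided by Theorem~\ref{theorem:ikkr} (the codimension jumps are $2$, $3$, and $\ge 3$); uniqueness in each case follows from the fact that $\sZ_A^{\ge k}$ is proper and integral over $\bC$. For smoothness of $\widetilde\sZ_A^{\ge k}$ away from the stated preimage, I would restrict the ambient scheme to the open subset $\Gr(3,V_6) \setminus \sZ_A^{\ge k+2}$ (on which $S_{k+2}$ becomes empty) and apply Corollary~\ref{corollary:lagrangian-smooth}; the smoothness and expected codimensions $k(k+1)/2$ of the strata $\sZ_A^{k}$ and $\sZ_A^{k+1}$ are read off from Theorem~\ref{theorem:ikkr}, since the codimensions $1, 3, 6$ of $\sZ_A^{\ge k}$ match the expected values. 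The identification $\cR_2 \cong \omega_{\sZ_A^{\ge 2}}(2)$ in part~2) would then follow by substituting $k=2$, $n=10$, $K_{\Gr(3,V_6)} = -6H$, and $\rc_1(\cA_2) = -4H$ into Lemma~\ref{lemma:ksk-lagrangian}.

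The main obstacle is the integrality of $\widetilde\sZ_A^{\ge k}$. For $k \in \{1, 2\}$, the branch loci $\sZ_A^{\ge k+1}$ are nonempty by Theorem~\ref{theorem:ikkr}, so Proposition~\ref{proposition:divisor-cover}(b) forces $\cM \otimes \cR_k$ not to be locally free, hence nontrivial in the class group, and Lemma~\ref{lemma:int} yields integrality. For $k=3$, however, $\sZ_A^{\ge 4}$ can be empty for general~$A$, so the above argument fails; I would therefore deploy the relative strategy from the proof of Theorem~\ref{theorem:y-covers}(2). Work over an open subset $U \subset \LGr(\bw3V_6)$ parameterizing Lagrangians without decomposable vectors on which the tautological subbundle has trivial determinant and which meets both the general and special loci, so that $\{[A] \in U : \sZ_A^{\ge 4} \ne \varnothing\}$ is a nonempty proper subset. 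Form the relative stratum $\cZ_U^{\ge 3}$ and the relative double cover $\widetilde\cZ_U^{\ge 3} \to \cZ_U^{\ge 3}$; since the total relative branch locus is nonempty, Lemma~\ref{lemma:int} yields integrality of $\widetilde\cZ_U^{\ge 3}$, and a codimension-2 extension argument (as in Theorem~\ref{theorem:y-covers}(2), combining reflexivity of the associated sheaf with Lemma~\ref{lemma:cover-divisor}) rules out reducibility on any individual fiber.
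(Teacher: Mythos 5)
Your proposal is correct and follows essentially the same route as the paper, whose proof of Theorem~\ref{theorem:z-covers} is literally ``repeat the proof of Theorem~\ref{theorem:y-covers}, replacing Theorem~\ref{theorem:ogrady} with Theorem~\ref{theorem:ikkr}'': you apply Theorem~\ref{theorem:covering-lagrangian} with $\cM=\cO(-2)$, get smoothness from Corollary~\ref{corollary:lagrangian-smooth}, compute $\cR_2\cong\omega_{\sZ_A^{\ge 2}}(2)$ from Lemma~\ref{lemma:ksk-lagrangian}, and prove integrality via the nonempty branch locus (Lemma~\ref{lemma:int}) for $k\le 2$ and via the relative construction over an open subset of $\LGr(\bw3V_6)$ for $k=3$, exactly mirroring the treatment of $\widetilde\sY_A^{\ge 2}$. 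The numerical inputs ($\det(\cA_2)\cong\cO(-4)$, expected codimensions $1,3,6$, $K_{\Gr(3,V_6)}=-6H$) are all checked correctly, so this is the intended argument.
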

 
\begin{proof}
Repeat  the proof of Theorem~\ref{theorem:y-covers},  replacing Theorem~\ref{theorem:ogrady} with Theorem~\ref{theorem:ikkr}.
\end{proof}

 While the double covers in parts~1) and~3) are new,
the one in part~2) coincides with the hyperk\"ahler sixfold constructed in~\cite{IKKR} (and called the EPW cube)
under the assumption $\sZ_A^4 = \varnothing$.

\begin{lemma}
If $\sZ_A^4 = \varnothing$, the scheme $\widetilde\sZ_A^{\ge 2}$ is isomorphic to the EPW cube.
\end{lemma}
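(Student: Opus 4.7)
My strategy is to invoke the uniqueness clause of Theorem~\ref{theorem:z-covers}(2). Since $\sZ_A^{\ge 2}$ is integral and projective over $\C$ by Theorem~\ref{theorem:ikkr}(b), every global invertible regular function on it is a nonzero complex constant, hence a square. So $g_2 \colon \widetilde\sZ_A^{\ge 2} \to \sZ_A^{\ge 2}$ is, up to isomorphism, the unique double cover of $\sZ_A^{\ge 2}$ that is \'etale over $\sZ_A^2$ and whose antiinvariant summand is isomorphic to $\cR_2(-2) \cong \omega_{\sZ_A^{\ge 2}}$ (the last isomorphism being part of Theorem~\ref{theorem:z-covers}(2)).

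I would then verify that the IKKR EPW cube $X_A$ has these properties.\ By~\cite{IKKR}, $X_A$ is a connected smooth hyperk\"ahler sixfold equipped with a double cover $p \colon X_A \to \sZ_A^{\ge 2}$ which is \'etale over $\sZ_A^2$ and branched along $\sZ_A^{\ge 3}$.\ So the two properties needed are the \'etaleness outside $\sZ_A^{\ge 3}$ (immediate) and the identification of the antiinvariant summand $\cN$ of $p_*\cO_{X_A}$---which is a self-dual rank-$1$ reflexive sheaf on $\sZ_A^{\ge 2}$ by Lemma~\ref{lemma:cover-divisor}---with $\cR_2(-2) \cong \omega_{\sZ_A^{\ge 2}}$.

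The identification of $\cN$ with $\omega_{\sZ_A^{\ge 2}}$ is the main step.\ My preferred route is to recognize that the IKKR construction of $X_A$ fits directly within the Lagrangian intersection framework of Section~\ref{section:lagrangian}, applied to the subbundles $\cA_1 = A \otimes \cO_{\Gr(3,V_6)}$ and $\cA_2 = V_6 \wedge \bw2\cU_3$ with the square root $\cM = \cO_{\Gr(3,V_6)}(-2)\vert_{\sZ_A^{\ge 2}}$ used in Theorem~\ref{theorem:z-covers}; in this case the equality $\cN = \cR_2(-2)$ holds by definition via the formula~\eqref{eq:rk-ck-lag}, and the isomorphism $\widetilde\sZ_A^{\ge 2} \cong X_A$ then follows from the uniqueness of the previous paragraph.\ The main obstacle will be to rigorously match the defining recipes.\ An alternative, more indirect route is to exploit the hyperk\"ahler property $\omega_{X_A} \cong \cO_{X_A}$: pulling back via $p$ to the \'etale locus $p^{-1}(\sZ_A^2)$ places $\omega_{\sZ_A^2}$ in the kernel of $p^*$ on $\Pic(\sZ_A^2)$, which combined with a careful analysis excludes the trivial possibility (it would extend by reflexivity across the codimension-$\ge 2$ locus $\sZ_A^{\ge 3}$ to $\omega_{\sZ_A^{\ge 2}} \cong \cO$, hence to $\cR_2(-2) \cong \cO$, and by the triviality of the torsor in Proposition~\ref{proposition:divisor-cover} would force $\widetilde\sZ_A^{\ge 2}$ to be disconnected, contradicting its integrality from Theorem~\ref{theorem:z-covers}(2)) and yields $\cN\cong \omega_{\sZ_A^{\ge 2}}$.
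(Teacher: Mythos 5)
Your first (``preferred'') route is not a proof: identifying the IKKR recipe with the Lagrangian-intersection recipe of Section~\ref{section:lagrangian} is exactly what the lemma asserts, and you yourself concede that matching the two constructions is the ``main obstacle''.\ Your fallback route, however, is essentially sound and genuinely different from the paper's argument.\ The paper uses the other hyperk\"ahler input from~\cite{IKKR}, simple connectedness: since the cube $\widetilde\sZ_A$ is smooth and $g^{-1}(\sZ_A^{\ge3})$ has codimension~$3$, the complement is still simply connected, so $\pi_1(\sZ_A^2)\cong\Z/2$; hence $\sZ_A^2$ has a unique connected \'etale double cover, both $\widetilde\sZ_A$ and $\widetilde\sZ_A^{\ge2}$ (connected because integral) restrict to it, and normality of both covers extends the resulting isomorphism over all of $\sZ_A^{\ge2}$.\ You instead use triviality of the canonical bundle of the cube, the isomorphism $\cR_2(-2)\cong\omega_{\sZ_A^{\ge2}}$ from Theorem~\ref{theorem:z-covers}, and the torsor statement of Proposition~\ref{proposition:divisor-cover}.\ The paper's route is shorter and needs neither $\cR_2\cong\omega_{\sZ_A^{\ge2}}(2)$ nor the classification of covers by sheaf data; yours has the extra payoff of identifying the antiinvariant summand of the IKKR cover with $\omega_{\sZ_A^{\ge2}}$.

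To make the fallback complete, two points must be filled in.\ The ``careful analysis'' is the identification $\ker\bigl(g^*\colon \Pic(\sZ_A^2)\to\Pic(g^{-1}(\sZ_A^2))\bigr)=\{\cO,\cN\vert_{\sZ_A^2}\}$, where $\cN$ is the antiinvariant summand of $g_*\cO_{\widetilde\sZ_A}$ (reflexive and self-dual by Lemma~\ref{lemma:cover-divisor}); this requires that $g^{-1}(\sZ_A^2)$ be connected (true, as the cube is connected and the removed locus has codimension~$3$) and that its invertible functions be constants, so that the kernel equals $H^1(\Z/2,\C^\times)\cong\Z/2$, generated by the class of the cover.\ Only with this does excluding the trivial case give $\omega_{\sZ_A^2}\cong\cN\vert_{\sZ_A^2}$ and hence, by reflexive extension across the codimension-$3$ locus, $\cN\cong\omega_{\sZ_A^{\ge2}}\cong\cR_2(-2)$.\ Second, to place the IKKR cover in the torsor of Proposition~\ref{proposition:divisor-cover} you need property~(b), that is, \'etaleness over the whole locally free locus of $\cR_2(-2)$; this holds because that locus is exactly $\sZ_A^2$ (by Corollary~\ref{corollary:lagrangian-smooth} the branch locus of $g_2$ is the nonempty $\sZ_A^{\ge3}$, while $g_2$ is \'etale over the locally free locus), and the IKKR cover is \'etale over $\sZ_A^2$ by~\cite{IKKR}.\ Finally, your appeal to the integrality of $\widetilde\sZ_A^{\ge2}$ to exclude the trivial case is not circular: Theorem~\ref{theorem:z-covers} establishes that integrality by the relative argument over the Lagrangian Grassmannian, independently of the present lemma.
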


\begin{proof}
Denote the EPW cube by $g \colon \widetilde\sZ_A \to \sZ_A^{\ge 2}$.\
The fundamental group of $\widetilde\sZ_A$ is trivial since, by~\cite[Theorem~1.1]{IKKR}, $\widetilde\sZ_A$ is smooth 
and deformation equivalent to the Hilbert cube of a K3 surface.\  Since $\widetilde\sZ_A$ is smooth and $g^{-1}(\sZ_A^{\ge3})$ has codimension~3, 
the fundamental group of $\widetilde\sZ_A \setminus g^{-1}(\sZ_A^{\ge3})$ is trivial as well.\
Since 
\begin{equation*}
g \colon \widetilde\sZ_A \setminus g^{-1}(\sZ_A^{\ge3}) \lra \sZ_A^2
\end{equation*}
is an \'etale double covering by~\cite[Proposition~3.1]{IKKR},  the fundamental group of $\sZ_A^2$ is $\Z/2$.\

By~Theorem~\ref{theorem:z-covers}.2),
\begin{equation*}
g_2 \colon \widetilde\sZ_A^{\ge 2} \setminus g_2^{-1}(\sZ_A^{\ge3}) \lra \sZ_A^2
\end{equation*}
is also an \'etale double cover.\
Since also $\widetilde\sZ_A^{\ge 2}$ is integral, we have an isomorphism
\begin{equation*}
\widetilde\sZ_A^{\ge 2} \setminus g_2^{-1}(\sZ_A^{\ge3}) \cong \widetilde\sZ_A \setminus g^{-1}(\sZ_A^{\ge3})
\end{equation*}
of schemes over $\sZ_A^2$.\
Since $\widetilde\sZ_A^{\ge 2}$ and $\widetilde\sZ_A$ are both normal, they are isomorphic as schemes over $\sZ_A^{\ge 2}$.
\end{proof}

\begin{remark}
 {One can also relate the second quadratic fibration of a Gushel--Mukai variety (see~\cite[Section~4.4]{DK1})
  to the double covers $\widetilde\sZ_{A,V_5}^{\ge k} \to \sZ_{A,V_5}^{\ge k}$ obtained from the double covers
of Theorem~\ref{theorem:z-covers} by base change along the embedding $\Gr(3,V_5) \to \Gr(3,V_6)$.\ In this situation, an analogue of Lemma~\ref{lemma:q1-lag} is true (with the same proof, using~\cite[Proposition~4.10]{DK1}).\ It is hard however to control the normality of the schemes $\widetilde\sZ_{A,V_5}^{\ge k}$ and $\sZ_{A,V_5}^{\ge k}$,
so we do not know of an analogue of Corollary~\ref{corollary:q1-lag}.}
\end{remark}

\end{document}